\author{Richard Ueltzen
    \thanks{University of Bonn, 53113 Bonn, Germany. Email address: \texttt{richard.ueltzen@gmail.com}\\
        The author is supported by a scholarship from the German Academic Scholarship Foundation (Studienstiftung des deutschen Volkes).}}
\date{}
\title{Characterizing graphs with high inducibility}
\newcommand{\ind}{\mathrm{ind}}
\newcommand{\Geom}{\mathrm{Geom}}
\newcommand{\Binom}{\mathrm{Bin}}
\newcommand{\A}{\mathcal A}
\newcommand{\B}{\mathcal B}
\newcommand{\D}{\mathcal D}
\newcommand{\E}{\mathcal E}
\newcommand{\F}{\mathcal F}
\newcommand{\IR}{\mathbb R}
\newcommand{\IZ}{\mathbb Z}
\newcommand{\IN}{\mathbb N}
\newcommand{\IP}{\mathbb P}
\newcommand{\IE}{\mathbb E}
\newcommand{\one}{\mathbbm 1}
\renewcommand{\epsilon}{\varepsilon}
\newcommand{\dcup}{\,\dot\cup\,}
\newcommand{\mmid}{\,\middle|\,}
\newcommand{\indd}[2]{I({#1}, #2)}
\newcommand{\brightness}[1]{\nu(#1)}
\DeclareMathOperator{\aut}{aut}
\NewDocumentCommand{\degree}{O{} O{} O{} m}{
    \ifthenelse{\equal{#3}{}} {
        \ifthenelse{\equal{#2}{}}{
            \ifthenelse{\equal{#1}{}}{
                d(#4)
            }{
                d(#4, #1)
            }
        }{
            \ifthenelse{\equal{#1}{}}{
                d_{#2}(#4)
            }{
                d_{#2}(#4, #1)
            }
        }
    }{
        \ifthenelse{\equal{#2}{}}{
            \ifthenelse{\equal{#1}{}}{
                d^{#3}(#4)
            }{
                d^{#3}(#4, #1)
            }
        }{
            \ifthenelse{\equal{#1}{}}{
                d^{#3}_{#2}(#4)
            }{
                d^{#3}_{#2}(#4, #1)
            }
        }
    }
}
\NewDocumentCommand{\neighbors}{O{} O{} O{} m}{
    \ifthenelse{\equal{#3}{}} {
        \ifthenelse{\equal{#2}{}}{
            \ifthenelse{\equal{#1}{}}{
                N(#4)
            }{
                N(#4, #1)
            }
        }{
            \ifthenelse{\equal{#1}{}}{
                N_{#2}(#4)
            }{
                N_{#2}(#4, #1)
            }
        }
    }{
        \ifthenelse{\equal{#2}{}}{
            \ifthenelse{\equal{#1}{}}{
                N^{#3}(#4)
            }{
                N^{#3}(#4, #1)
            }
        }{
            \ifthenelse{\equal{#1}{}}{
                N^{#3}_{#2}(#4)
            }{
                N^{#3}_{#2}(#4, #1)
            }
        }
    }
}
\tikzset{
    vertex/.style={
            circle,
            draw,          
            fill=none,
            minimum size=6pt, 
            inner sep=0pt,  
            line width=0.5pt
        },
    edge/.style={
            draw,
            line width=0.5pt
        },
    box/.style={
            rounded corners=10pt, 
        },
    highlight/.style={
            fill opacity=0.2,    
            draw opacity=0.7,    
            line width=1pt,      
            fill=#1,
            color=#1
        }
}
\theoremstyle{plain}
\newtheorem{theorem}{Theorem}
\newtheorem{lemma}[theorem]{Lemma}
\newtheorem{corollary}[theorem]{Corollary}
\newtheorem{claim}[theorem]{Claim}
\newtheorem{fact}[theorem]{Fact}
\newtheorem{conjecture}[theorem]{Conjecture}
\theoremstyle{remark}
\newtheorem{remark}[theorem]{Remark}
\theoremstyle{definition}
\newtheorem{definition}[theorem]{Definition}
\numberwithin{equation}{section}
\numberwithin{theorem}{section}
\crefname{claim}{Claim}{Claims}
\Crefname{equation}{}{}
\def\namedlabel#1#2{\begingroup
    #2%
    \def\@currentlabel{#2}%
    \phantomsection\label{#1}\endgroup
}
\begin{document}

\maketitle

\begin{abstract}
    For a positive integer $k$ and a graph $H$ on $k$ vertices, we are interested in the inducibility of $H$, denoted $\mathrm{ind}(H)$, which is defined as the maximum possible probability that choosing $k$ vertices uniformly at random from a large graph $G$, they induce a copy of $H$.
    It follows from the resolved Edge-statistics conjecture that if $H \not \in \{K_k, \bar K_k\}$, then $\mathrm{ind}(H) \leq 1 / e + o_k(1)$.
    Equality holds for the star graph $K_{1, k-1}$, the graph with a single edge on $k$ vertices and their complements.
    We prove that for all other graphs $H$, we have $\mathrm{ind}(H) \leq c + o_k(1)$ for an absolute constant $c < 1 / e$.
    Moreover, we explicitly characterize all graphs with inducibility bounded away from zero.
    Namely, we show that this is the class of graphs $H$ for which there is a set $V_0 \subseteq V(H)$ of bounded size with the property that all permutations of $V(H) \backslash V_0$ extend to an automorphism of $H$.
\end{abstract}

\section{Introduction}\label{section:introduction}
Consider a positive integer $k$ and a graph $H$ on $k$ vertices.
We are interested in the maximum possible probability that a uniformly randomly chosen $k$-vertex set in a large graph $G$ induces a copy of $H$.
This quantity is the inducibility of $H$, which was first introduced by Pippenger and Golumbic~\cite{pippenger-golumbic:1975}.
For a more precise definition, suppose that $n \geq k$ is a positive integer, and $G$ is a graph on $n$ vertices.
Taking $W$ to be a uniformly random set of $k$ vertices of $G$, we define the \textit{induced density} of $H$ in $G$ as $\indd{H}{G} \coloneqq \IP[G[W] \cong H]$, the probability that $W$ induces a copy of $H$ in $G$.
We denote the maximum of $\indd{H}{G}$ over all $n$-vertex graphs $G$ by $\indd{H}{n}$.
By an averaging argument, $n \mapsto \indd{H}{n}$ is monotonically decreasing.
Hence, the \textit{inducibility} of $H$, defined as $\ind(H) \coloneqq \lim_{n \to \infty} \indd{H}{n} \in [0, 1]$, exists.

In recent years, the question of bounding the inducibility of graphs received much attention (see, for example, \cite{kral-norin-volec-2019,balogh-hu-lidicky-pfender:2016,yuster-2019,hefetz-tyomkyn-2018}).
However, the exact inducibility is only known for a few small graphs as well as for specific classes of graphs. For example, the inducibility of the cycle on six vertices is unknown.

In this paper, we give general asymptotic upper bounds for graph inducibilities.
This follows a direction of research proposed by Alon, Hefetz, Krivelevich and Tyomkyn~\cite[Conjecture 1.2]{alon-hefetz-krivelevich-tyomkyn:2018}, who conjectured that if $H$ is not the complete graph $K_k$ or its complement $\overline K_k$, then $\ind(H) \leq 1 / e + o_k(1)$.
Since inducibility is invariant under taking complements, i.e., $\ind(H) = \ind(\bar H)$, it is enough to consider the case where the number of edges of $H$ is $\ell \leq \frac{1}{2} \binom{k}{2}$.
Kwan, Sudakov, and Tran~\cite{kwan-sudakov-tran:2019} proved the conjecture in the case $Ck \leq \ell \leq \frac{1}{2} \binom{k}{2}$ for some absolute constant $C$.
Subsequently, the proof of the conjecture was independently completed by Fox and Sauermann~\cite{fox-sauermann:2020}, as well as by Martinsson, Mousset, Noever, and Truji\'c~\cite{martinsson-mousset-noever-trujic:2020}.
The papers above also resolved a stronger conjecture of Alon et al.~\cite[Conjecture 1.1]{alon-hefetz-krivelevich-tyomkyn:2018} on the number of edges induced by a uniformly random $k$-vertex set. In conclusion, the following theorem holds:
\begin{theorem}\label{theorem:edge-statistics-conjecture}
    For all positive integers $k$ and any graph $H$ on $k$ vertices with $H \not \in \{K_k, \overline K_k\}$, we have
    \begin{equation*}
        \ind(H) \leq \frac{1}{e} + o_k(1).
    \end{equation*}
\end{theorem}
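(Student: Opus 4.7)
The theorem follows quickly from the edge-statistics bound cited in the excerpt; essentially all of the work lies in establishing that bound, which is done in the cited references. Concretely, the plan is a short reduction followed by an invocation of Kwan--Sudakov--Tran together with Fox--Sauermann and Martinsson--Mousset--Noever--Truji\'c.

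Set $\ell := e(H)$. The hypothesis $H \notin \{K_k, \bar K_k\}$ forces $1 \leq \ell \leq \binom{k}{2}-1$. The first step is the tautological observation that for any $n$-vertex graph $G$ and a uniformly random $k$-subset $W \subseteq V(G)$,
\[
    \indd{H}{G} = \IP[G[W] \cong H] \leq \IP[e(G[W]) = \ell],
\]
since the isomorphism type of $G[W]$ determines its edge count. Taking the maximum over $G$ and the limit $n \to \infty$ (which exists by the monotonicity already noted in the excerpt) yields
\[
    \ind(H) \;\leq\; \limsup_{n \to \infty}\, \max_{|V(G)|=n}\, \IP[e(G[W]) = \ell].
\]

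The second -- and only substantial -- step is to bound the right-hand side by $\tfrac{1}{e} + o_k(1)$, uniformly over $\ell \in \{1, \dots, \binom{k}{2}-1\}$. This is the edge-statistics conjecture in its now-resolved form. I would invoke it as a black box: for $\ell \geq Ck$ it was established by Kwan--Sudakov--Tran using concentration/anti-concentration arguments showing that $e(G[W])$ cannot be too sharply peaked at a single value, and for all remaining cases $1 \leq \ell < Ck$ it was completed independently by Fox--Sauermann and by Martinsson--Mousset--Noever--Truji\'c via switching-type arguments that compare the number of $k$-subsets inducing $\ell$ edges to those inducing $\ell - 1$ or $\ell + 1$ edges.

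The main obstacle thus lies entirely inside the cited proofs of the edge-statistics conjecture, specifically in the small-$\ell$ regime: there the distribution of $e(G[W])$ can genuinely be concentrated near a single value -- the star $K_{1,n-1}$ attains $\IP[e(G[W]) = 1] \to 1/e$ as $k \to \infty$ -- so the constant $1/e$ is optimal and the combinatorial argument must be essentially tight. Once that ingredient is in hand, the theorem follows from the one-line reduction above, with no additional case analysis on the structure of $H$ beyond its edge count.
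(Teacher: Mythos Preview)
Your proposal is correct and matches the paper's treatment: the paper does not prove this theorem itself but states it as a consequence of the resolved edge-statistics conjecture, citing Kwan--Sudakov--Tran, Fox--Sauermann, and Martinsson--Mousset--Noever--Truji\'c, exactly as you do via the one-line reduction $\IP[G[W]\cong H]\le \IP[e(G[W])=\ell]$. One small slip in your aside on tightness: the star $K_{1,n-1}$ as host graph does not give $\IP[e(G[W])=1]\to 1/e$ (its induced edge count is either $0$ or $k-1$); the correct witnessing construction for $\ell=1$ is the random graph $G(n,p)$ with $p=\binom{k}{2}^{-1}$.
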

As noted by Alon et al.~\cite{alon-hefetz-krivelevich-tyomkyn:2018}, this upper bound is tight, which the following two examples show:
Consider the complete bipartite graph $G$ with parts of sizes $n / k$ (red) and $n(k-1) / k$ (blue) on $n$ vertices with $k \mid n$.
A straightforward computation shows that when picking $k$ of its vertices uniformly at random, the probability of picking exactly one red vertex tends to $1 / e + o_k(1)$ as $n \to \infty$. Therefore, the inducibility of the star graph $K_{1, k-1}$ satisfies $\ind(K_{1, k-1}) \geq 1 / e + o_k(1)$. Moreover, in the random graph $G(n, p)$ on $n \geq k$ vertices with $p = \binom{k}{2}^{-1}$, the expected density of $k$-vertex sets inducing a single edge is $1 / e + o_k(1)$. Hence, the $k$-vertex graph $M_k^{(1)}$ with a single edge has inducibility $\ind(M_k^{(1)}) \geq 1 / e + o_k(1)$.

The research question addressed in this paper is the following: Are the above examples the only ones achieving equality in \cref{theorem:edge-statistics-conjecture}?
We answer this question affirmatively:

\begin{theorem}\label{theorem:main-result}
    There exists an absolute constant $c < 1 / e$ such that for all positive integers $k$ and $\ell$ with $2 \leq \ell \leq \frac{1}{2} \binom{k}{2}$ and any graph $H$ with $k$ vertices and $\ell$ edges satisfying $H \not\cong K_{1, k-1}$, we have
    \begin{equation*}
        \ind(H) \leq c + o_k(1).
    \end{equation*}
\end{theorem}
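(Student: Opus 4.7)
The plan is to exploit a stability version of \cref{theorem:edge-statistics-conjecture}: if $\ind(H)$ is close to $1/e$, then the extremal host graph must be close to one of the two known constructions---the complete bipartite graph $K_{n/k,\,n(k-1)/k}$ (yielding $\ind(K_{1,k-1}) \approx 1/e$) or a quasirandom graph $G(n, 1/\binom{k}{2})$ (yielding $\ind(M_k^{(1)}) \approx 1/e$)---and in each case this structure forces $H$ to coincide with the corresponding extremal graph. Since the hypothesis $\ell \geq 2$ rules out $M_k^{(1)}$, it remains to separate $\ind(H)$ from $1/e$ by a uniform constant whenever $H \not\cong K_{1,k-1}$.

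I would split the argument by the number of edges $\ell$. In the sparse regime (bounded $\ell$), a near-extremal $G$ is expected to be quasirandom of density $p \approx \ell/\binom{k}{2}$, so $\indd{H}{G}$ tends to $\frac{k!}{|\mathrm{aut}(H)|} p^\ell (1-p)^{\binom{k}{2}-\ell}$. Using the elementary bound $k!/|\mathrm{aut}(H)| \leq \binom{\binom{k}{2}}{\ell}$ (the number of labellings of $H$ is at most the total number of $\ell$-edge labelled graphs on $k$ vertices), this is upper-bounded by $\IP[\Bin(\binom{k}{2}, p) = \ell]$, which converges to $\ell^\ell e^{-\ell}/\ell!$. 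For $\ell \geq 2$ this maximum equals $2/e^2 < 1/e$, giving the desired uniform savings. In the denser regime, a near-extremal $G$ is expected to be a blow-up of a small graph $F$, and an induced copy of $H$ corresponds to a distribution of its vertices across the parts of $F$. A direct counting argument shows that in order to reach density $1/e$, $F$ must be a single edge with one part of relative size $\approx 1/k$, and $H$ must place exactly one vertex in the small part and $k-1$ interchangeable vertices in the large part---forcing $H = K_{1,k-1}$. Any deviation (e.g., $H$ being another tree, a near-star, or a bipartite graph with a different split) costs a uniform additive constant that I would quantify by a multinomial expansion of the corresponding density.

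The main obstacle is the stability step itself: upgrading the qualitative bounds of Fox--Sauermann and Martinsson--Mousset--Noever--Truji\'c into a quantitative statement of the form ``$\indd{H}{G} \geq 1/e - \epsilon$ forces $G$ to be $o(n^2)$-close to one of the two extremal templates''. The intermediate edge-count regime $\ell \approx k$, where both templates are plausible and neither alone is dominant, is the most delicate: a combination of regularity-type decomposition, entropy/convexity counting, and local perturbation arguments appears necessary to lower-bound the slack by an absolute constant independent of $k$. Making these structural perturbations small enough that the resulting bound strictly beats $1/e$ uniformly across all $k$ and all admissible $H$ is what drives the final constant $c$.
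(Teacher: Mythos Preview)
Your proposal is a research plan rather than a proof, and its central step is exactly the one you flag as ``the main obstacle'': a stability theorem for the edge-statistics conjecture asserting that $\indd{H}{G}\ge 1/e-\epsilon$ forces $G$ to be $o(n^2)$-close to one of the two extremal templates. No such result is available, and you give no argument for it; the entire deduction---both the quasirandom branch for bounded $\ell$ and the blow-up branch for larger $\ell$---is conditional on it. In particular, your sparse computation $\indd{H}{G}\le \binom{\binom{k}{2}}{\ell}p^\ell(1-p)^{\binom{k}{2}-\ell}$ is only valid for (near\nobreakdash-)quasirandom $G$ of density $p$, not for arbitrary $G$, so without stability it bounds nothing. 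The dichotomy ``quasirandom vs.\ blow-up'' also leaves the entire range $2\le \ell\le Ck$ unresolved, not just $\ell\approx k$: already for $\ell=2$ or $\ell=3$ the extremal host need not be quasirandom, and showing that any deviation from either template costs a \emph{uniform} constant is precisely the content of the theorem.

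The paper avoids stability entirely and works on $H$ rather than on $G$. After invoking \cref{theorem:kwan-sudakov-tran-corollary} to dispose of $\ell\ge Ck$, it splits on the number $m(H)$ of non-isolated vertices. If $m(H)\ge\alpha k$ (\cref{theorem:main-result-for-denser-graphs}), three structural lemmas---one for graphs with a high-degree vertex (\cref{lemma:ind-if-high-degree-vtcs}), one for graphs whose degree sequence is nearly constant and small (\cref{lemma:ind-if-almost-uniform-low-maximum-degree}), and one for graphs with no dominant degree value (\cref{lemma:ind-if-not-almost-uniform})---combine to give $\ind(H)\le 2/e^2+o_k(1)$ directly, via hypergeometric anti-concentration rather than any analysis of near-extremal $G$. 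If $m(H)\le\alpha k$ (\cref{theorem:main-result-for-sparser-graphs}), the argument is a refinement of the random-ordering technique of Martinsson--Mousset--Noever--Truji\'c: the notions of \emph{detectable} vertex and \emph{bright} labeling (\cref{section:proof-of-theorem:main-result-for-sparser-graphs}) and a three-colour marking scheme on an i.i.d.\ vertex stream (\cref{section:proof-of-lemma:ind-if-sparse}) yield the bound of \cref{lemma:ind-if-sparse}, and a combinatorial lower bound $\nu(H)\ge 1/12$ then gives a concrete $c<1/e$. None of this requires knowing what the extremal $G$ looks like.
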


This shows that except for the known equality cases, the bound in \cref{theorem:edge-statistics-conjecture} can be strengthened to $c + o_k(1)$ for some absolute constant $c < 1 / e$.
Our second main result shows that all graphs with high inducibility have a specific structure that is given by the following definition (see \cref{figure:tamed-graph} for an illustration):

\begin{definition}
    For any positive integer $D$ and any graph $H$, we say that $H$ is \textit{$D$-tame} if there exists a subset $V_0 \subseteq V(H)$ of size at most $D$ such that any permutation $\pi$ of $V(H)$, with the property that $\pi(v) = v$ for every $v \in V_0$, is an automorphism of $H$. In this case, we say that $V_0$ tames $H$.
\end{definition}

Equivalently, $V_0 \subseteq V(H)$ tames $H$ if and only if the following two conditions hold
\begin{itemize}
    \item $V(H) \backslash V_0$ is either a clique or a stable set,
    \item for each $v \in V_0$, $v$ is either connected to no vertex in $V(H) \backslash V_0$ or to all vertices in $V(H) \backslash V_0$.
\end{itemize}

\begin{figure}[ht]
    \centering
    \includegraphics{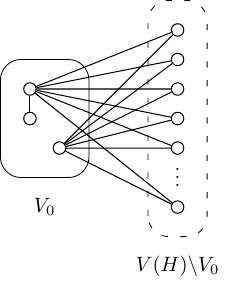}
    \caption{The graph $H$ is tamed by $V_0$.}
    \label{figure:tamed-graph}
\end{figure}

It is not hard to show, by adapting the construction of $G$ for $H = K_{1, k-1}$, that all $D$-tame graphs have inducibility at least $\gamma = \gamma(D) > 0$. Conversely, the following theorem shows that all graphs with inducibility bounded away from $0$ exhibit this structure:
\begin{theorem}\label{theorem:characterization-of-high-inducibility-graphs}
    For any fixed $\gamma > 0$, there exists a positive integer $D$ such that any graph $H$ satisfying $\ind(H) \geq \gamma$ is $D$-tame.
\end{theorem}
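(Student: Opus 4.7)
The plan is to combine a graphon-limit (weak regularity) approximation of a near-extremal graph with a combinatorial analysis of the resulting blow-up-like structure. We may assume $k = \abs{V(H)}$ is large: if $k \leq D(\gamma)$, then $V_0 = V(H)$ trivially tames $H$. First, take a sequence of graphs $G_n$ with $\indd{H}{G_n} \to \ind(H) \geq \gamma$, pass to a graphon limit $W$ of induced density at least $\gamma$, and apply a step-function approximation to obtain a step graphon $W^*$ on $f = f(\gamma)$ parts, with fractional sizes $\alpha_1, \dotsc, \alpha_f$ and pairwise densities $W^*(i,j) \in [0,1]$, such that the induced density of $H$ in $W^*$ is still at least $\gamma/2$.

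Second, write $\indd{H}{W^*}$ as a sum over decorations $\tau\colon V(H) \to [f]$, each contributing $\prod_v \alpha_{\tau(v)}$ times an edge-density factor; equivalently, $\indd{H}{W^*}$ is the probability that a multinomial sample $\mathbf T = \mathrm{Mult}(k; \alpha)$ followed by independent Bernoulli edges drawn from $W^*$ yields a graph isomorphic to $H$. The \emph{core step} is then to prove that $\indd{H}{W^*} \geq \gamma/2$ forces a ``bulk'' part: there exists $i^* \in [f]$ with $\alpha_{i^*} \geq 1 - D(\gamma)/k$, together with $W^*(i^*, i^*) \in \{0,1\}$ and $W^*(i^*, j) \in \{0,1\}$ for every $j \neq i^*$. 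The argument is by multinomial and Bernoulli concentration: if every $\alpha_i$ were bounded away from $1$ by more than $O(1/k)$, then $\mathbf T$ would be too spread to match the rigid combinatorial structure of $H$ with probability $\geq \gamma/2$; and if $W^*(i^*, \cdot)$ were not $\{0,1\}$-valued, the Bernoulli edges inside or adjacent to the bulk of size $\Omega(k)$ would reproduce the fixed graph $H$ only with vanishing probability.

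Given the structure from the core step, we set $V_0 = \{v \in V(H) : \tau^*(v) \neq i^*\}$ for a dominant decoration $\tau^*$. Then $\abs{V_0} \leq D(\gamma)$ by multinomial (Poisson) tail bounds, the complement $V(H) \setminus V_0 = (\tau^*)^{-1}(i^*)$ is a clique or stable set according as $W^*(i^*, i^*) = 1$ or $0$, and each $v \in V_0$ is connected either to all or to none of $V(H) \setminus V_0$ according as $W^*(i^*, \tau^*(v)) = 1$ or $0$. This is exactly the tameness condition.

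The principal obstacle is the core step. The two extremes, where $\alpha$ is either uniformly spread or sharply concentrated on one coordinate, are handled by standard multinomial concentration, giving low inducibility in the former. The delicate case is the intermediate ``Poisson regime'' $\alpha_{i^*} = 1 - \Theta(1)/k$, in which the remaining parts typically contribute $\Theta(1)$ vertices each (as in the star extremizer): here the inducibility can be close to the threshold $1/e$, and one must argue carefully with Poisson local limits and the combinatorics of small-part decorations. A secondary subtlety is ruling out intermediate edge densities $W^*(i^*, j) \in (0,1)$, which I expect to handle by refining the partition on neighborhood profiles and using that a random graph of density $p \in (0,1)$ on $\Omega(k)$ vertices produces any specified labeled graph with probability $\exp(-\Omega(k))$.
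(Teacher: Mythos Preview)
There is a concrete gap already in the regularity step: you cannot take the step approximation $W^*$ on $f=f(\gamma)$ parts independent of $k$. The counting lemma for \emph{induced} densities gives only $|\indd{H}{W}-\indd{H}{W^*}|\le c\binom{k}{2}\,d_\square(W,W^*)$, so to preserve the density within $\gamma/2$ you need $d_\square(W,W^*)\lesssim \gamma/k^2$ and hence, by weak regularity, a number of parts growing with $k$. Once $f$ depends on $k$, the subsequent pigeonhole and multinomial-concentration arguments on the $\alpha_i$ no longer produce bounds depending only on~$\gamma$. Even waiving this, your ``core step'' is essentially the whole theorem, and the sketched justifications are not proofs. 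In the Poisson regime $\alpha_{i^*}=1-\Theta(1)/k$ that you rightly flag as delicate, one must for instance rule out graphs $H$ with $m(H)$ unbounded but small maximum degree---not $D$-tame for any bounded $D$, yet perfectly compatible with a single bulk part carrying a tiny internal density $\Theta(k^{-2})$; your plan contains no mechanism for this beyond ``argue carefully.'' Similarly, excluding $W^*(i^*,j)\in(0,1)$ via ``a $p$-random graph on $\Omega(k)$ vertices hits any fixed labelled graph with probability $e^{-\Omega(k)}$'' does not work: you need isomorphism classes, not labelled graphs, and the number of admissible labellings can be of order $(k-O(1))!$, which exactly cancels such a factor.

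The paper's route is entirely finitary and quite different. After reducing to $|E(H)|\le C(\gamma)k$ via Kwan--Sudakov--Tran, it splits on the degree sequence of $H$: separate lemmas handle (i) $\Delta(H)\ge\epsilon k$, by isolating a bounded set $S$ of high-degree vertices and controlling $\ind(H)$ through $|S|$, through the size of $T=\{v\notin S:\text{$v$ misses a neighbour in $S$}\}$, and through $\ind(H')$ for $H'=H[V(H)\setminus S]$; (ii) $\Delta(H)\le\epsilon k$ with almost all degrees equal to some $\tau\ge1$; (iii) spread-out degree sequences (via a result of Martinsson et al.); and the residual case $m(H)\le k/32$ is covered by a bound of Fox--Sauermann. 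The taming set is then taken explicitly as $V_0=S\cup T\cup U$, with $U$ the non-isolated vertices of $H'$, and each of $|S|,|T|,|U|$ is shown to be $O_\gamma(1)$ unless $\ind(H)<\gamma$.
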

\begin{remark}
    The number of automorphisms of a $D$-tame graph $H$ with $k$ vertices satisfies $\aut(H) \geq (k-D)!$, and a converse also holds (see the author's bachelor's thesis~\cite[Appendix B]{ueltzen:bs-thesis}). The following equivalences summarize these results:
    \begin{equation*}
        \ind(H) \geq \Omega(1) \quad \iff \quad H~\text{is $O(1)$-tame} \quad \iff \quad
        \aut(H) \geq (k - O(1))!.
    \end{equation*}
\end{remark}

It is natural to ask how small we can choose the constant $c$ in \cref{theorem:main-result}.
Adjusting the constructions for the equality cases in \cref{theorem:edge-statistics-conjecture} shows that the complete bipartite graph with parts of sizes $2$ and $k-2$ and the graph with $k$ vertices and $2$ non-adjacent edges both have inducibility at least $2 / e^2 + o_k(1)$.
The same is true for the graph obtained from $K_{2, k-2}$ by adding an edge between the vertices in the part of size $2$.
Hence, the constant $c$ in \cref{theorem:main-result} is at least $2 / e^2$. We conjecture that this bound is tight:
\begin{conjecture}\label{conjecture:upper-bound-2-over-e-squared}
    For all positive integers $k$ and $\ell$ with $2 \leq \ell \leq \frac{1}{2} \binom{k}{2}$ and any graph $H$ with $k$ vertices and $\ell$ edges such that $H$ is not $K_{1, k-1}$, we have
    \begin{equation*}
        \ind(H) \leq \frac{2}{e^2} + o_k(1).
    \end{equation*}
\end{conjecture}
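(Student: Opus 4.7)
The approach is to combine Theorem~\ref{theorem:characterization-of-high-inducibility-graphs} (the tameness characterization) with a stability-type strengthening and a multinomial optimization over blow-up parameters. Assume, toward contradiction, that $\ind(H) > 2/e^{2} + \delta$ for some fixed $\delta > 0$, where $H$ has $k$ vertices, $\ell$ edges with $2 \leq \ell \leq \frac{1}{2}\binom{k}{2}$, and $H \not\cong K_{1,k-1}$. Since $\ind(H) = \ind(\overline H)$ and the edge bound keeps $\overline H$ in the allowed range, pass to the graph in $\{H, \overline H\}$ whose tame decomposition has a stable-set body; Theorem~\ref{theorem:characterization-of-high-inducibility-graphs} with $\gamma = 2/e^{2} + \delta$ then yields a subset $V_0 \subseteq V(H)$ of size at most $D = D(\delta)$ taming $H$. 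Write $V(H) = V_0 \cup S$ with $S$ stable, and split $V_0 = A \cup B$ where $A$ (size $a$) is joined completely to $S$ and $B$ (size $b$) is anticomplete to $S$.

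The main new ingredient needed is a \emph{stability} version of Theorem~\ref{theorem:characterization-of-high-inducibility-graphs}: any $n$-vertex graph $G$ with $\indd{H}{G} \geq 2/e^{2} + \delta/2$ is, up to $o_n(1)$ edit distance, a blow-up of some pattern graph of size $O_\delta(1)$. I would prove this by passing to a graphon limit $W$ of an extremizing sequence and arguing that $W$ must be a step function with a bounded number of blocks, each of constant density. Within any alleged non-constant block, tameness of $H$ makes the density functional $t(H, W)$ linear in the variation inside the block, so a symmetrization or local swap strictly increases inducibility unless the block is already constant. The number of blocks is controlled by a regularity-type argument using that $H$ has only $O_D(1)$ distinct vertex-types.

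Granted this stability statement, the inducibility reduces to a finite-dimensional optimization. For the canonical two-block blow-up (a block of measure $\alpha$ joined completely to a stable block of measure $1-\alpha$), only samples with exactly $a$ vertices in the first block contribute, giving density asymptotic to $\binom{k}{a}\,\alpha^{a}(1-\alpha)^{k-a}$. Optimizing in $\alpha$ yields the limit $a^{a}/(a!\,e^{a})$, which equals $2/e^{2}$ exactly when $a = 2$ and is strictly smaller for $a \geq 3$. The case $a = 1$, $b = 0$ forces $H \cong K_{1, k-1}$ and is excluded. For $a = 1$ with $b \geq 1$, the $B$-vertices demand a third (isolated) block of measure $\beta \sim b/k$, and the analogous computation gives at most $b^{b}/(b!\,e^{b+1}) \leq 1/e^{2}$. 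The case $a = 0$ forces $H$ to be the disjoint union of a stable set with a bounded graph $H[V_0]$; the extremal host becomes an Erd\H{o}s--R\'enyi type random graph with $p = \ell/\binom{k}{2}$, bounding inducibility by $\ell^{\ell}/(\ell!\,e^{\ell}) \leq 2/e^{2}$ for $\ell \geq 2$.

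The hard step is clearly the stability statement. Theorem~\ref{theorem:characterization-of-high-inducibility-graphs} controls only the structure of $H$, not that of $G$, whereas the conjectured constant $2/e^{2}$ is sharp and so the argument cannot afford any slack beyond $o_k(1)$. Upgrading Theorem~\ref{theorem:characterization-of-high-inducibility-graphs} to a genuine stability result will likely require either a flag-algebraic variational argument on graphon limits or a quantitative refinement of the counting/entropy lemmas from the Fox--Sauermann and Martinsson--Mousset--Noever--Truji\'c approach, sharpened to directly exhibit the blow-up structure on near-extremal hosts. Everything downstream of stability is elementary calculus with the multinomial formulas above.
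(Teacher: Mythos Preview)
This statement is a \emph{conjecture} in the paper, not a theorem; the paper explicitly leaves it open. There is therefore no proof in the paper to compare against. What the paper does establish is the conjecture in the regime $m(H) \geq \alpha k$ for any fixed $\alpha > 0$ (this is Theorem~\ref{theorem:main-result-for-denser-graphs}), while for $m(H) \leq \alpha k$ it only obtains $\ind(H) \leq c$ for some absolute $c < 1/e$ strictly larger than $2/e^{2}$ (Theorem~\ref{theorem:main-result-for-sparser-graphs}). So the genuinely open case is precisely when $H$ consists of a graph on $O(1)$ non-isolated vertices together with $k - O(1)$ isolated vertices.

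In your tame decomposition this open case is exactly $a = 0$ (and more generally $a + b$ bounded with $S$ stable and anticomplete to everything outside $A$), and your treatment of it is where the proposal breaks. You assert that ``the extremal host becomes an Erd\H{o}s--R\'enyi type random graph with $p = \ell/\binom{k}{2}$, bounding inducibility by $\ell^{\ell}/(\ell!\,e^{\ell})$''. Neither half of this is justified. Your stability claim, even if granted, only says that an extremal $G$ is close to a step graphon; a constant graphon of density $p$ is such a step function, but so are many other things, and you give no argument forcing the extremizer to be a single block of constant density. Moreover, the quantity $\ell^{\ell}/(\ell!\,e^{\ell})$ bounds $\IP[e(G[W]) = \ell]$ for $G = G(n,p)$ with the optimal $p$; it does not automatically bound $\IP[G[W] \cong H]$ over \emph{all} hosts $G$, which is what inducibility requires. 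Replacing the host by $G(n,p)$ is a lower-bound construction, not an upper bound. The paper's Sections~\ref{section:proof-of-theorem:main-result-for-sparser-graphs}--\ref{section:proof-of-lemma:ind-if-sparse} work hard precisely on this sparse case, introducing the machinery of detectable vertices and bright labelings, and still fall short of $2/e^{2}$; your one-line dismissal of the case does not engage with the actual obstruction.

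The other cases ($a \geq 2$, and $a = 1$ with $b \geq 1$) are closer in spirit to what the paper already handles via Lemma~\ref{lemma:ind-if-high-degree-vtcs}, and there your multinomial computations are essentially the right heuristic; but even there the reduction to a two- or three-block blow-up hinges on the unproved stability statement, for which ``symmetrization or local swap strictly increases inducibility'' is not an argument. In short, you have correctly identified that the conjecture would follow from a suitable host-side stability theorem, but you have not supplied one, and the case you wave through most quickly is exactly the case the paper cannot resolve.
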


\section{Preliminaries and proof approach}

We use the following corollary of a result of Kwan, Sudakov and Tran~\cite[Theorem 1.1]{kwan-sudakov-tran:2019}:
\begin{theorem}\label{theorem:kwan-sudakov-tran-corollary}
    For any fixed $\gamma > 0$, there exists a fixed $C = C(\gamma)$ with the following property: For all positive integers $k$ and $\ell$ with $Ck \leq \ell \leq \frac{1}{2} \binom{k}{2}$ and any graph $H$ with $k$ vertices and $\ell$ edges, we have
    \begin{equation*}
        \ind(H) \leq \gamma.
    \end{equation*}
\end{theorem}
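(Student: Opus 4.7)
The plan is to derive this statement as an essentially immediate corollary of Theorem~1.1 of Kwan, Sudakov and Tran~\cite{kwan-sudakov-tran:2019}, which controls the probability that a uniformly random $k$-subset induces a prescribed number of edges (as opposed to a prescribed graph). In our notation, their theorem reads as follows: for every $\gamma > 0$ there exists $C = C(\gamma)$ such that for every positive integer $k$, every $\ell$ with $Ck \leq \ell \leq \tfrac{1}{2}\binom{k}{2}$, every $n \geq k$ and every $n$-vertex graph $G$, if $W$ denotes a uniformly random $k$-vertex subset of $V(G)$, then $\IP\bigl[e(G[W]) = \ell\bigr] \leq \gamma$.

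Given this, I would fix $\gamma > 0$, take the associated $C = C(\gamma)$, and let $H$ be any graph on $k$ vertices with exactly $\ell$ edges, where $Ck \leq \ell \leq \tfrac{1}{2}\binom{k}{2}$. For an arbitrary $n$-vertex graph $G$ with $n \geq k$ and $W$ as above, I would observe that the event $\{G[W] \cong H\}$ is contained in the event $\{e(G[W]) = \ell\}$, since any graph isomorphic to $H$ has exactly $\ell$ edges. Hence
\begin{equation*}
    \indd{H}{G} \;=\; \IP[G[W] \cong H] \;\leq\; \IP[e(G[W]) = \ell] \;\leq\; \gamma.
\end{equation*}
Since $G$ was arbitrary, this gives $\indd{H}{n} \leq \gamma$ for every $n \geq k$, and taking $n \to \infty$ yields $\ind(H) \leq \gamma$, as required.

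There is no real obstacle here: the only conceptual point is the trivial containment of events above, which lets one pass from the Kwan--Sudakov--Tran bound on edge counts to a bound on the inducibility of any individual graph with that many edges. In writing this up I would be careful only to state the Kwan--Sudakov--Tran theorem in precisely the form needed (uniformly over all host graphs $G$ and all $n \geq k$), so that the two successive bounds can be chained without having to re-introduce quantifiers.
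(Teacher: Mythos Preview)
Your proposal is correct and matches the paper's approach: the paper simply states this theorem as a corollary of \cite[Theorem~1.1]{kwan-sudakov-tran:2019} without spelling out the details, and your derivation via the containment $\{G[W]\cong H\}\subseteq\{e(G[W])=\ell\}$ is exactly the intended one-line deduction.
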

\cref{theorem:kwan-sudakov-tran-corollary} implies that, in the proofs of \cref{theorem:main-result,theorem:characterization-of-high-inducibility-graphs}, we only need to consider the case, where $H$ has $\ell \leq O_k(k)$ edges. Therefore, the following theorem shows that the bound in \cref{conjecture:upper-bound-2-over-e-squared} holds if the number of non-isolated vertices of $H$, denoted as $m(H)$, satisfies $m(H) \geq \Omega_k(k)$:
\begin{theorem}\label{theorem:main-result-for-denser-graphs}
    For any fixed $\alpha > 0$ and $C$, the following holds: For all positive integers $k$ and $\ell$ with $\ell \leq Ck$ and any graph $H$ with $k$ vertices and $\ell$ edges such that $H \not\cong K_{1,k-1}$ and $m(H) \geq \alpha k$, we have
    \begin{equation*}
        \ind(H) \leq \frac{2}{e^2} + o_k(1).
    \end{equation*}
\end{theorem}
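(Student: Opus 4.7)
The plan is to extend the single-vertex switching argument behind \cref{theorem:edge-statistics-conjecture} (which yields the $1/e$ bound) to a two-vertex switching argument yielding $2/e^2$. The single-switch bound is tight precisely in the configuration where $G[W]$ is forced to be $K_{1,k-1}$; the twin hypotheses $H \not\cong K_{1,k-1}$ and $m(H) \geq \alpha k$ together block this extremal configuration and should gain an extra factor of $2/e$.

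First, I would reduce to the case where $G$ has average degree $O_k(n/k)$: otherwise $\IE[|E(G[W])|] = \binom{k}{2}|E(G)|/\binom{n}{2}$ exceeds $\ell \leq Ck$ by a superlinear factor, so Chebyshev's inequality applied to $|E(G[W])|$ forces $\IP[|E(G[W])| = \ell] = o_k(1)$. Next, I would carry out a double counting over triples $(W, \{w_1, w_2\}, \{u_1, u_2\})$, where $W$ ranges over $k$-subsets of $V(G)$ with $G[W] \cong H$, $\{w_1, w_2\} \subset W$, $\{u_1, u_2\} \subset V(G) \setminus W$, and the swapped set $(W \setminus \{w_1, w_2\}) \cup \{u_1, u_2\}$ also induces a copy of $H$. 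Lower-bounding the number of such switches per ``good'' $W$ exploits the structure of $H$: since $H$ has at least $\alpha k$ non-isolated vertices and is not a star, one can identify two independent coordinates along which switches are abundant, each contributing a factor of $1/e$, while the unordered pair contributes an extra factor of $2$. Combined with a matching upper bound from double counting over the swapped set, this should yield $\indd{H}{G} \leq 2/e^2 + o_k(1)$ uniformly over $G$.

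The main obstacle is the lower bound on the number of valid double switches, which must hold uniformly across all $H$ in the regime. The subcase where the subgraph of $H$ induced on its non-isolated vertices is itself isomorphic to a star $K_{1, m-1}$ (so that $H$ is a ``generalized star'' plus isolated vertices) will be the most delicate, as such $H$ most closely resembles $K_{1, k-1}$ and the two switch directions must be identified carefully. I expect this to require subcases based on the relative size $m/k$ and, for the borderline case $m \approx k$, a separate computation adapting the bipartite blow-up extremal construction to recover the $2/e^2$ bound.
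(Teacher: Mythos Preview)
Your plan diverges sharply from the paper's proof, and I think it has a genuine gap at its core rather than merely a different decomposition.

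The paper does not use switching. It splits on the degree structure of $H$. If $\Delta(H) \geq \epsilon k$, one finds (via \cref{claim:conditions-for-lemma:ind-if-high-degree-vtcs}) a gap $(ak,bk)$ in the degree sequence of $H$, sets $S=\{v:d(v)\geq bk\}$ with $|S|=s$, and shows (\cref{lemma:ind-if-high-degree-vtcs}) that in any host $G$, with high probability $W\cap V^+$ coincides with the $s$ high-degree vertices; then $\IP[|W\cap V^+|=s]\leq s^s/(s!e^s)+o_k(1)$ by hypergeometric anti-concentration. For $s\geq 2$ this is already $\leq 2/e^2$; for $s=1$ one recurses on $H'=H-S$ (invoking \cref{theorem:edge-statistics-conjecture}) or uses the set $T$ of non-universal low-degree vertices. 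If $\Delta(H)\leq\epsilon k$, either some degree value $\tau\geq 1$ is shared by $\beta k$ vertices and \cref{lemma:ind-if-almost-uniform-low-maximum-degree} gives a bound of the form $(\tau^\tau/(\tau!e^\tau))^f$ with $f$ large, or no degree dominates and \cref{lemma:ind-if-not-almost-uniform} gives $o_k(1)$. The constant $2/e^2$ enters as the Poisson point mass $2^2/(2!e^2)$, not via any switch count.

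The gap in your plan is the unsubstantiated link between switching and these constants. You take as a starting point that a single-vertex switch ``yields the $1/e$ bound'', but neither the Fox--Sauermann nor the Martinsson--Mousset--Noever--Truji\'c proof of \cref{theorem:edge-statistics-conjecture} is a switching argument in your sense; the $1/e$ there comes from anti-concentration of a hypergeometric or sequential-revealing random variable. A plain double count over $(W,\{w_1,w_2\},\{u_1,u_2\})$ with both $W$ and the swapped set inducing $H$ is symmetric in $W$ and $W'$ and by itself gives no bound on the number of good $W$. More importantly, the ``two independent coordinates along which switches are abundant'' are never identified, and the hypotheses $H\not\cong K_{1,k-1}$ and $m(H)\geq\alpha k$ do not obviously supply two disjoint switch directions whose contributions multiply. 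In precisely the case you flag as delicate---$M(H)\cong K_{1,m-1}$ with $\alpha k\leq m<k$---there is a single distinguished vertex (the centre), and it is unclear what the second independent switch would even be; yet this case already contains graphs with $\ind(H)$ near $2/e^2$, so no slack is available. Without a concrete mechanism that produces the factor $2/e^2$, the proposal is a heuristic rather than a proof outline.
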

In the case $m(H) \leq o_k(k)$, we only slightly improve on the bound in \cref{theorem:edge-statistics-conjecture}:
\begin{theorem}\label{theorem:main-result-for-sparser-graphs}
    There exist absolute constants $\alpha > 0$ and $c$ with $2 / e^2 \leq c < 1 / e$ such that for all positive integers $k$ and $\ell$ with $\ell \geq 2$ and any graph $H$ with $k$ vertices and $\ell$ edges satisfying $m(H) \leq \alpha k$, we have
    \begin{equation*}
        \ind(H) \leq c.
    \end{equation*}
\end{theorem}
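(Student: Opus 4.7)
Let $H'$ be the induced subgraph of $H$ on its non-isolated vertices, so $H'$ has $m = m(H) \leq \alpha k$ vertices and $\ell \geq 2$ edges, with minimum degree at least $1$ (in particular $m \leq 2\ell$ by the handshake lemma). For any host graph $G$ on $n$ vertices, every induced copy of $H$ in $G$ is determined by a $k$-subset $W = W_1 \dcup W_2$ with $|W_1| = m$, $G[W_1] \cong H'$, and $W_2$ an independent $(k-m)$-set contained in the common non-neighborhood $U_{W_1} := \{v \in V(G) \setminus W_1 : v \text{ has no neighbor in } W_1\}$. Writing $I_t(F)$ for the number of independent $t$-sets in a graph $F$, this yields
\[
\indd{H}{G} = \binom{n}{k}^{-1} \sum_{S \,:\, G[S] \cong H'} I_{k-m}(G[U_S]).
\]

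The plan is to bound both factors of this sum in terms of an \emph{effective edge density} $p$ of $G$. For the inner factor I would apply a Kruskal--Katona / entropy-type estimate of the form $I_{k-m}(G[U_S]) \leq \binom{|U_S|}{k-m}(1-q_S)^{(1-o(1))\binom{k-m}{2}}$, where $q_S$ is the edge density of $G[U_S]$, and for the outer sum I would use that the number of cores $S$ with $G[S] \cong H'$ is at most $\binom{n}{m} \cdot m!/\aut(H') \cdot p^{\ell}$ up to lower-order terms (exploiting that $H'$ has $\ell \geq 2$ edges on $m = O(\ell)$ vertices). Combining them yields
\[
\indd{H}{G} \leq (1+o_k(1)) \cdot \frac{k!}{\aut(H)}\, p^{\ell}(1-p)^{\binom{k}{2}-\ell},
\]
which is exactly the expected induced $H$-density in $G(n,p)$. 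Optimizing over $p \in (0,1)$ and using $k!/\aut(H) = k(k-1)\cdots(k-m+1)/\aut(H')$ reduces the right-hand side to $(1+o_k(1)) \cdot k^{m-2\ell}(2\ell)^{\ell}e^{-\ell}/\aut(H')$. Since $m \leq 2\ell$ with equality if and only if $H' \cong \ell K_2$, this quantity tends to $0$ as $k \to \infty$ unless $H \cong \matching{k}{\ell}$, in which case it equals $\ell^{\ell}e^{-\ell}/\ell!$. The latter is maximized over $\ell \geq 2$ at $\ell = 2$, with value $2/e^2$, so this step yields $\indd{H}{G} \leq 2/e^2 + o_k(1)$ under the hypotheses.

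To upgrade $2/e^2 + o_k(1)$ to an absolute constant $c < 1/e$ valid for all $k$, I would split the parameter range into three zones: (i) $\ell \geq Ck$ is directly handled by \cref{theorem:kwan-sudakov-tran-corollary}; (ii) for $k$ beyond some threshold $k_0$, the $o_k(1)$ correction is strictly smaller than the gap $1/e - 2/e^2 \approx 0.097$; (iii) for $k \leq k_0$ the constraint $m \leq \alpha k$ together with $\ell \geq 2$ forces $k \geq 3/\alpha$, leaving only finitely many configurations, which can be dealt with by direct case analysis. The main obstacle is making the above decomposition quantitative for an arbitrary (non-quasirandom) host graph $G$: the local densities $q_S$ and the edge density $p$ may vary significantly across different subsets $S$, so a partitioning or convexity argument is required to pass to a single effective $p$; in addition the $(1-o(1))$ loss in the Kruskal--Katona exponent must be controlled tightly enough, under the assumption $m \leq \alpha k$, to keep all error terms well below the gap $1/e - 2/e^2$ uniformly in $k$.
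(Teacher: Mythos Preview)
Your proposal is a heuristic sketch rather than a proof, and the central analytic step is false as stated. The claimed Kruskal--Katona/entropy bound
\[
I_{k-m}(G[U_S]) \leq \binom{|U_S|}{k-m}(1-q_S)^{(1-o(1))\binom{k-m}{2}}
\]
does not hold for general graphs. Kruskal--Katona, applied to the complement, gives an exponent of order $(k-m)/2$, not $\binom{k-m}{2}$: if $G[U_S]$ has $N$ vertices and edge density $q_S$, the correct bound is roughly $\binom{N\sqrt{1-q_S}}{k-m} \approx \binom{N}{k-m}(1-q_S)^{(k-m)/2}$. A concrete counterexample is $G[U_S] = K_{N/2,N/2}$, where $q_S \approx 1/2$ but the number of independent $t$-sets is $2\binom{N/2}{t} \approx \binom{N}{t}\cdot 2^{1-t}$, vastly larger than your claimed $\binom{N}{t}\cdot 2^{-\binom{t}{2}}$. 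With the correct exponent the subsequent optimization no longer yields anything close to $2/e^2$. The companion bound on the number of cores $S$ with $G[S]\cong H'$ by $\binom{n}{m}\cdot m!/\aut(H')\cdot p^{\ell}$ is likewise only an average statement, not a pointwise one, and you yourself flag the passage to a single effective $p$ as the ``main obstacle'' without offering a mechanism to overcome it. So the argument as written does not establish the displayed inequality $\indd{H}{G}\le(1+o_k(1))\,\tfrac{k!}{\aut(H)}\,p^{\ell}(1-p)^{\binom{k}{2}-\ell}$ for arbitrary $G$.

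The paper takes a completely different route, which avoids any comparison to $G(n,p)$. It samples vertices $v_1,v_2,\dots$ of $G$ sequentially and colors each one black, green, or red according to whether the graph induced on the black vertices seen so far matches $M(H)$, $M(H-v')$ for some \emph{detectable} $v'$, or neither. A combinatorial analysis (\cref{claim:characterization-of-detectable-vertices,claim:beta-bound}) shows that a uniformly random labeling of $V(H)$ is ``bright'' with probability at least $1/12$, which forces the pattern of non-black terms, conditioned on $G_k\cong H$, to fall into one of two rigid configurations. The unconditional probabilities of those configurations are then bounded via a geometric/binomial calculation (\cref{claim:bound-for-colors-of-red-green-vertex-sequence}), yielding $\ind(H)\le\frac{2+3e^2\alpha}{2+(e-2)/12}\cdot\frac{1}{e}+2\alpha$, which is strictly below $1/e$ for $\alpha$ small. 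None of this relies on global density parameters of $G$.
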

\begin{proof}[Proof of \cref{theorem:main-result} assuming \cref{theorem:main-result-for-denser-graphs,theorem:main-result-for-sparser-graphs}]
    Let $\alpha$ and $c$ be the constants in \cref{theorem:main-result-for-sparser-graphs}. Moreover, let $C = C(c)$ be the constant in \cref{theorem:kwan-sudakov-tran-corollary}.
    Consider positive integers $k$ and $\ell$ with $2 \leq \ell \leq \frac{1}{2} \binom{k}{2}$ and a graph $H$ with $k$ vertices and $\ell$ edges satisfying $H \not\cong K_{1, k-1}$.
    It is sufficient to prove
    \begin{equation}\label{equation:desired-claim-for-theorem-main-result}
        \ind(H) \leq c + o_k(1),
    \end{equation}
    as $c < 1 / e$.
    If $\ell \geq C k$, then \cref{theorem:kwan-sudakov-tran-corollary} and $\ell \leq \frac{1}{2} \binom{k}{2}$ imply \cref{equation:desired-claim-for-theorem-main-result}.
    Moreover, if $m(H) \leq \alpha k$, then \cref{equation:desired-claim-for-theorem-main-result} follows from \cref{theorem:main-result-for-sparser-graphs} and $\ell \geq 2$.
    We now consider the remaining case that $\ell \leq C k$ and $m(H) \geq \alpha k$. Since $H \not\cong K_{1,k-1}$, \cref{theorem:main-result-for-denser-graphs} and $c \geq 2 / e^2$ imply
    \begin{equation*}
        \ind(H) \leq \frac{2}{e^2} + o_k(1) \leq c + o_k(1).\qedhere
    \end{equation*}
\end{proof}

We deduce \cref{theorem:characterization-of-high-inducibility-graphs} from the following, similar theorem combined with \cref{theorem:kwan-sudakov-tran-corollary}:
\begin{theorem}\label{theorem:weak-characterization-of-high-inducibility-graphs}
    For any fixed $\gamma > 0$ and $C$, there exists a constant $D = D(\gamma, C)$ with the following property: For all positive integers $k$ and $\ell$ with $\ell \leq Ck$ and any graph $H$ with $k$ vertices and $\ell$ edges such that $H$ is not $D$-tame, we have
    \begin{equation*}
        \ind(H) \leq \gamma.
    \end{equation*}
\end{theorem}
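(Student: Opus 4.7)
I propose to prove the contrapositive: assume $\ind(H)\geq \gamma$ for a graph $H$ on $k$ vertices with $\ell\leq Ck$ edges, and deduce that $H$ is $D$-tame for a suitable $D=D(\gamma,C)$. Choose $n$ large and a graph $G$ on $n$ vertices with $\indd{H}{G}\geq \gamma/2$. The main idea is to relate the inducibility of $H$ to the symmetry structure of $H$: one expects the orbits of $\aut(H)$ to govern how vertices of $G$ can contribute to random copies of $H$, and a symmetry deficit in $H$ should translate into lost inducibility.

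The core of the proof is to show that $\aut(H)$ has an orbit $O$ of size at least $k-D'$ for some $D'=D'(\gamma,C)$, and that $O$ is an independent set in $H$. I plan to argue this by examining, for each pair $(v,w)\in V(H)\times V(G)$, the probability $p_{v,w}$ that a uniformly random labelled copy $\phi:V(H)\to V(G)$ sends $v$ to $w$, and grouping the vertices of $V(H)$ according to the similarity of their role distributions $(p_{v,w})_{w\in V(G)}$. Vertices with identical role distributions automatically lie in a common orbit of $\aut(H)$, so it suffices to show that all but a bounded number of vertices of $V(H)$ have essentially the same role distribution. Here the sparsity $\ell\leq Ck$ is essential: the expected number of edges inside a random $k$-subset inducing $H$ is $\Theta(k)$, which forces the subset of $G$ that can play a \enquote{high-degree} role in a random copy to be small and structurally rigid. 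A switching / perturbation argument, modelled on the approach of Fox and Sauermann~\cite{fox-sauermann:2020} and of Martinsson, Mousset, Noever and Truji\'c~\cite{martinsson-mousset-noever-trujic:2020} for the edge-statistics conjecture, should then produce the desired dominant orbit by iteratively removing obstructions that would otherwise allow a strict improvement in $\indd{H}{G}$.

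Once an orbit $O$ with $|O|\geq k-D'$ is found, the sparsity condition $\ell\leq Ck$ forces $O$ to be an independent set in $H$ for $k$ large (otherwise $H$ would contain a clique of size $k-D'$, contributing $\binom{k-D'}{2}\gg Ck$ edges). It then remains to check the second defining condition of tameness: that every $v\in V_0:=V(H)\setminus O$ is joined to either all or none of $O$. But the vertices of $O$ are interchangeable under $\aut(H)$, so any two vertices of $O$ have the same adjacency to $v$; hence $v$ is connected to all of $O$ or to none. Setting $D:=D'(\gamma,C)$ then completes the proof. The main obstacle is the middle step: quantitatively forcing the role distributions to cluster into a single dominant orbit when $\ind(H)$ is merely bounded below by some possibly tiny $\gamma>0$. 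Unlike the edge-statistics setting, where one only needs a stability statement very close to the threshold $1/e$, here the argument must be robust across the whole range of inducibilities bounded away from zero, and the extracted exceptional set must have size depending only on $\gamma$ and $C$, not on $k$ or $n$.
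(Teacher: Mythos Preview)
Your proposal has two genuine gaps. First, the crucial middle step --- producing a large $\aut(H)$-orbit from a lower bound on $\ind(H)$ via ``role distributions'' --- is not carried out; you correctly flag it as the main obstacle and then leave it as a hope that ``a switching/perturbation argument \dots should'' work. Nothing in the cited edge-statistics arguments controls $\aut(H)$, and identical role distributions $(p_{v,w})_w$ in a particular host $G$ do not force two vertices of $H$ to lie in a common $\aut(H)$-orbit: the map from orbits to role distributions is well-defined but need not be injective. Second, even granting a large orbit $O$, your deduction of tameness is wrong on both points. An $\aut(H)$-orbit induces a vertex-transitive subgraph, not necessarily a clique or an independent set --- a cycle $C_k$ is a single orbit that is neither --- so sparsity alone does not make $O$ independent. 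And for $v\notin O$ and $u_1,u_2\in O$, the automorphism sending $u_1$ to $u_2$ need not fix $v$, so $v$ can be adjacent to some but not all of $O$. Tameness requires that \emph{every} permutation of $V(H)\setminus V_0$ extend to an automorphism, which is far stronger than $V(H)\setminus V_0$ lying in a single orbit.

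For contrast, the paper never touches $\aut(H)$. It uses the direct structural observation that for any $S\subseteq V(H)$, if $T=\{v\notin S:\text{$v$ is not adjacent to all of $S$}\}$ and $U$ is the set of non-isolated vertices of $H-S$, then $S\cup T\cup U$ always tames $H$. Taking $S$ to be the set of vertices of degree at least $\epsilon k$ (whose size is bounded via $\ell\leq Ck$), one of $|S|$, $|T|$, $m(H-S)$ must be large if $H$ is not $D$-tame; the paper then bounds $\ind(H)$ separately in each of these three cases, using anti-concentration lemmas for the first two and, for the third, a degree-distribution dichotomy on $H-S$ combined with the Fox--Sauermann bound for graphs with few non-isolated vertices.
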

\begin{proof}[Proof of \cref{theorem:characterization-of-high-inducibility-graphs} assuming \cref{theorem:weak-characterization-of-high-inducibility-graphs}]
    For a fixed $\gamma > 0$, let $C \coloneqq C(\gamma)$ be the constant in \cref{theorem:kwan-sudakov-tran-corollary} and let $D \coloneqq D(\gamma, C)$ be the constant in \cref{theorem:weak-characterization-of-high-inducibility-graphs}. Consider positive integers $k$ and $\ell$ and a graph $H$ with $k$ vertices and $\ell$ edges such that $H$ is not $D$-tame.
    Since inducibility and $D$-tameness are invariant under taking complements, we may assume $\ell \leq \frac{1}{2} \binom{k}{2}$. If $Ck \leq \ell \leq \frac{1}{2} \binom{k}{2}$, \cref{theorem:kwan-sudakov-tran-corollary} implies $\ind(H) \leq \gamma$. In the remaining case, $\ell \leq Ck$, \cref{theorem:weak-characterization-of-high-inducibility-graphs} yields $\ind(H) \leq \gamma$.
\end{proof}

\subsection{Proof approach}

The rest of this paper contains the proofs of \cref{theorem:main-result-for-denser-graphs,theorem:main-result-for-sparser-graphs,theorem:weak-characterization-of-high-inducibility-graphs}.
\cref{section:proofs-of-theorem:main-result-for-denser-graphs-and-theorem:weak-characterization-of-high-inducibility-graphs} begins with three lemmas that, very roughly, provide upper bounds on the inducibility of a graph $H$ with $k$ vertices and $\ell$ edges, if for some fixed $C$ and $\epsilon > 0$,
\begin{enumerate}
    \item the number of edges in $H$ satisfies $\ell \leq Ck$ and $H$ has maximum degree at least $\epsilon k$,
    \item the maximum degree in $H$ is at most $\epsilon k$ and $(1 - o_k(1))k$ vertices in $H$ have degree $\tau$ for some $\tau \geq 1$,
    \item the number of edges in $H$ satisfies $\ell \leq Ck$ and there is no $\tau \geq 0$ such that $(1 - o_k(1))k$ vertices have degree $\tau$,
\end{enumerate}
respectively.
The first two lemmas are proven in \cref{section:proof-of-lemma:ind-if-high-degree-vtcs,section:proof-of-lemma:ind-if-almost-uniform-low-maximum-degree}.
The third lemma is a direct consequence of a result of Martinsson et al.~\cite[Claim 3.3]{martinsson-mousset-noever-trujic:2020}. An alternative proof can be found in the author's bachelor's thesis~\cite[Section 6]{ueltzen:bs-thesis}.
For any graph $H$ with $k$ vertices and $\ell \leq Ck$ edges, at least one of the three lemmas implies an upper bound on $\ind(H)$, unless $(1 - o_k(1))k$ vertices in $H$ have degree $\tau = 0$, i.e., $m(H) \leq o_k(k)$. This is why \cref{theorem:main-result-for-denser-graphs} follows from the lemmas, as we show in \cref{subsection:proof-of-theorem:main-result-for-denser-graphs}.
The proof of \cref{theorem:weak-characterization-of-high-inducibility-graphs} in \cref{subsection:proof-of-theorem:weak-characterization-of-high-inducibility-graphs} is similar.
A key difference is that we must address the case $m(H) \leq o_k(k)$, for which we will use the following corollary of two lemmas due to Fox and Sauermann~\cite[Lemmas 3.1 and 3.2]{fox-sauermann:2020}:
\begin{theorem}\label{theorem:fox-sauermann:2020}
    For any fixed $\gamma > 0$, there exists a fixed positive integer $D = D(\gamma)$ such that for all positive integers $k$ and any graph $H$ with $k$ vertices and $D \leq m(H) \leq k / 32$,
    \begin{equation*}
        \ind(H) \leq \gamma.
    \end{equation*}
\end{theorem}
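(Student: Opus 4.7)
The plan is to invoke the two lemmas of Fox and Sauermann~\cite{fox-sauermann:2020} referenced in the statement. Let $m = m(H) \in [D, k/32]$, and write $H$ as the disjoint union $H' \dcup \overline K_{k-m}$, where $H'$ is the induced subgraph on the non-isolated vertices of $H$. Since $H'$ has no isolated vertex, every vertex of $H'$ has degree at least $1$, giving $\abs{E(H')} \ge m/2$, and moreover $H$ contains the large reservoir of $k - m \ge 31k/32$ isolated vertices.

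I would argue in two steps. First, for an arbitrary host graph $G$ on $n$ vertices, I relate $\indd{H}{G}$ to $\indd{H'}{G}$: if a uniformly random $k$-subset $W \subseteq V(G)$ induces $H$, then the $m$-subset $W_1 \subseteq W$ corresponding to the non-isolated vertices of $G[W]$ must induce $H'$ in $G$, while the remaining $k - m$ vertices of $W$ must be pairwise non-adjacent and all non-adjacent to $W_1$. A union bound over the $m$-subsets of $W$ gives $\indd{H}{G} \le \binom{k}{m} \cdot \indd{H'}{G}$, though as discussed below this bound in isolation is too weak. Second, I would apply the two Fox--Sauermann lemmas to bound $\indd{H'}{n}$: since $H'$ has $m \ge D$ vertices and no isolated vertex, by choosing $D = D(\gamma)$ large enough at least one of the two lemmas should yield a quantitative bound $\indd{H'}{n} \le \gamma'(D)$ with $\gamma'(D) \to 0$ as $D \to \infty$, with the two lemmas covering complementary structural regimes of $H'$ (e.g., few edges versus many edges, or twin-rich versus twin-poor).

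The main obstacle is combining these two steps. The factor $\binom{k}{m}$ from the first step is much too large to be absorbed by the direct inducibility bound on $H'$ alone. The necessary saving must come from the co-neighbourhood constraint on the $(k-m)$-subset $W_2 \coloneqq W \setminus W_1$: since $W_2$ must be independent in $G$ and have no edges to $W_1$, in any host graph $G$ for which $\indd{H}{G}$ is bounded away from $0$, the $m$-subsets that induce $H'$ are forced to lie in very sparse regions of $G$, which combinatorially cancels the $\binom{k}{m}$ factor. Carrying out this cancellation quantitatively --- and verifying that the threshold $k/32$ in the hypothesis is the precise budget that makes it compatible with the Fox--Sauermann bounds on $H'$ --- is the technical heart of the proof and is the step I would expect to consume most of the effort.
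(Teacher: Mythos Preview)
The paper does not actually prove this statement: it is quoted as a corollary of Fox--Sauermann's Lemmas~3.1 and~3.2 and no further argument is given. So there is no in-paper proof to compare against beyond the bare citation.

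Your proposal is a plan rather than a proof, and the plan has a genuine gap beyond the one you already flag. You hope in step~(3) that the Fox--Sauermann lemmas, applied to the non-isolated part $H'$, will yield $\indd{H'}{n}\le\gamma'(D)$ with $\gamma'(D)\to 0$ as $D\to\infty$. They cannot: $H'$ is an arbitrary $m$-vertex graph with minimum degree at least~$1$, and such graphs can have inducibility equal to~$1$. For instance $H'=K_m$ gives $H=K_m\sqcup\overline K_{k-m}$, which satisfies $D\le m(H)\le k/32$ and is a perfectly legitimate instance of the theorem, yet $\ind(H')=1$ for every $m$. So no bound on $\indd{H'}{G}$ that is uniform over host graphs $G$ can contribute anything, and \emph{all} of the saving must come from the constraint on the remaining $k-m$ vertices --- not merely the cancellation of a $\binom{k}{m}$ factor, but the entire content of the bound. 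Your decomposition ``bound $\indd{H'}{G}$ first, then recover the loss from the co-neighbourhood constraint'' is therefore the wrong order of operations.

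What Fox--Sauermann's lemmas actually do is work directly with the random $k$-subset $W$ of the host graph and bound the probability of an event that depends only on a coarse statistic of $G[W]$ (in their setting, the number of edges or of non-isolated vertices), not on the isomorphism type of $M(G[W])$. The event $\{G[W]\cong H\}$ is then contained in that coarse event, and the hypothesis $D\le m(H)\le k/32$ is exactly what places the target value of the statistic in the anti-concentration regime covered by their two lemmas. There is no detour through $H'$ at all, which is also why the theorem's hypothesis involves only $m(H)$ and not the structure of $H'$.
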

\cref{theorem:main-result-for-sparser-graphs} is proven in \cref{section:proof-of-theorem:main-result-for-sparser-graphs} using a key lemma that we prove in \cref{section:proof-of-lemma:ind-if-sparse}. The proof strategy for the key lemma is a modified version of an argument by Martinsson et al.~\cite[Proposition 2.1]{martinsson-mousset-noever-trujic:2020}: We pick vertices $v_1, v_2 \dots$ of a large graph $G$ uniformly at random, independently and consecutively, and count the number $X$ of times 'something happens.' We will show that, conditioned on $G[\{v_1, \dots, v_k\}] \cong H$, $X$ is very concentrated. Due to the setup of the random experiment, we can determine the distribution of $X$ and show that it is not concentrated. In total, this yields an upper bound on the induced density of $H$ in $G$. \cref{section:proof-of-theorem:main-result-for-sparser-graphs} is concerned with developing the tools to define 'something happening' in such a way that, conditioned on $G[\{v_1, \dots, v_k\}] \cong H$, $X$ is indeed concentrated.

\subsection{Notation}\label{subsection:notation}

For a graph $G$, we denote the \textit{sets of vertices} and \textit{edges} of $G$ with $V(G)$ and $E(G)$, respectively.
For a vertex set $W \subseteq V(G)$, we denote the \textit{induced subgraph} of $G$ with vertices $W$ as $G[W]$.
Moreover, for $v \in V(G)$, the graph $G - v$ is defined as $G[V(G) \backslash \{v\}]$.
For $v \in V(G)$ and $W \subseteq V(G)$, we let $\neighbors[W][G]{v}$ be the \textit{set of neighbors} of $v$ in $W$ and let $\degree[W][G]{v} \coloneqq |\neighbors[W][G]{v}|$ be the \textit{degree} of $v$ into $W$.
We abbreviate $\neighbors[V(G)][G]{v}$ as $\neighbors[][G]{v}$ and $\degree[V(G)][G]{v}$ as $\degree[][G]{v}$.
We denote the \textit{maximum degree} of a vertex in $G$ by $\Delta(G)$.
Furthermore, we denote the number of non-isolated vertices in $G$ by $m(G)$ and define $M(G)$ as the induced subgraph of $G$ restricted to its non-isolated vertices.
We denote the \textit{complete graph} on $k$ vertices as $K_k$ and the \textit{complete bipartite graph} with parts of sizes $a$ and $b$ as $K_{a,b}$.
When there is no risk of confusion, we omit the subscript $G$ from all notation.

We denote the set of \textit{integers} with $\IZ$.
For a set $S$ and a nonnegative integer $f$, we denote the set of $f$-element subsets of $S$ by $\binom{S}{f}$.

The \textit{probability measure} in our probability space is always denoted by $\IP$.
For an event $\A$, we denote the complement of $\A$ as $\A^c$ and its indicator random variable as $\one_{\A}$.

In our asymptotic notation, we indicate the variable interpreted as going to infinity in the subscript.
For example, an expression is $o_k(1)$ if, for any fixed values of the constants, it approaches $0$ as $k \to \infty$, regardless of the values of all other variables.
In this context, a variable is treated as constant if it is specified as fixed in its definition or if it is listed after a '$\mid$' in the subscript.
For example, an expression is $o_{n \mid k}(1)$ if it approaches $0$ as $n \to \infty$ for any fixed value of $k$.

\section{Proofs of Theorems~\ref{theorem:main-result-for-denser-graphs}~and~\ref{theorem:weak-characterization-of-high-inducibility-graphs}}\label{section:proofs-of-theorem:main-result-for-denser-graphs-and-theorem:weak-characterization-of-high-inducibility-graphs}

The proofs of \cref{theorem:main-result-for-denser-graphs,theorem:weak-characterization-of-high-inducibility-graphs} are based on the following three lemmas, which we will prove in the following sections.

\begin{lemma}\label{lemma:ind-if-high-degree-vtcs}
    Fix a positive real number $\delta$ and positive integers $s$ and $t$. Consider a positive integer $k$ and a graph $H$ on $k$ vertices. Suppose that real numbers $a, b \in (0, 1)$ with $b - a \geq \delta$ are chosen such that no vertex in $H$ has degree in the interval $(ak, bk)$ and $S \coloneqq \{v \in V(H) \mid \degree{v} \geq bk\}$ has size $s$. Then, the following bounds on $\ind(H)$ hold:
    \begin{itemize}
        \item[\namedlabel{item:part1-lemma:ind-if-high-degree-vtcs}{(1)}]
            For $H' \coloneqq H[V(H) \backslash S]$, we have
            \begin{equation*}
                \ind(H) \leq \frac{s^s}{s!e^s} \cdot \ind(H') + o_k(1) \leq \frac{s^s}{s!e^s} + o_k(1).
            \end{equation*}
        \item[\namedlabel{item:part2-lemma:ind-if-high-degree-vtcs}{(2)}]
            If $T \coloneqq \{v \in V(H) \backslash S \mid \neighbors[S]{v} \neq S\}$ has size at least $t$, then
            \begin{equation*}
                \ind(H) \leq \frac{s^s}{s!e^s} \cdot \frac{t^t}{t!e^t} + o_k(1).
            \end{equation*}
    \end{itemize}
\end{lemma}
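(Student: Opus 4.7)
The plan is to analyze a large $n$-vertex graph $G$ with $\indd{H}{G}$ close to $\ind(H)$ by partitioning its vertex set using the degree gap of $H$. Let $W$ be a uniformly random $k$-subset of $V(G)$; the goal is to bound $\IP[G[W] \cong H]$. I would set $V_H \coloneqq \{v \in V(G) : d_G(v) \geq (a+b)n/2\}$, $V_L \coloneqq V(G) \setminus V_H$, and $p \coloneqq |V_H|/n$. Hoeffding's inequality for hypergeometric random variables, together with a union bound over the $k$ vertices of $W$, then yields that with probability $1 - o_k(1)$, every $u \in W$ satisfies $|d_{G[W]}(u) - (k-1)d_G(u)/(n-1)| = O(\sqrt{k \log k})$. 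Because $H$ has no vertex of degree in the gap $(ak, bk)$ of width $\delta k$, this uniform concentration forces $|W \cap V_H| = s$ whenever $G[W] \cong H$, and $W \cap V_H$ must correspond exactly to $S$ under any such isomorphism.

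For Part (1), I condition on $W \cap V_H = A$ for $A \in \binom{V_H}{s}$: then $W \setminus A$ is a uniformly random $(k-s)$-subset of $V_L$, and $G[W] \cong H$ forces $G[W \setminus A] \cong H'$, so $\IP[G[W] \cong H \mid W \cap V_H = A] \leq \ind(H') + o_{n \mid k}(1)$. The marginal probability $\IP[|W \cap V_H| = s]$ converges as $n \to \infty$ to $\binom{k}{s} p^s (1-p)^{k-s}$, whose maximum over $p \in [0,1]$ is $\frac{s^s}{s!e^s}(1 + o_k(1))$. Multiplying these estimates and letting $n \to \infty$ yields $\ind(H) \leq \frac{s^s}{s!e^s} \ind(H') + o_k(1)$.

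For Part (2), I sharpen the conditional estimate using $T$. Define $V_L^*(A) \coloneqq \{u \in V_L : \exists v \in A,\, uv \notin E(G)\}$. If $G[W] \cong H$ with $W \cap V_H = A$, then a vertex $u \in W \setminus A$ corresponds to a $T$-vertex under the isomorphism if and only if $u$ misses an edge to $A$; hence $|W \cap V_L^*(A)| = |T| \eqqcolon t_0$, a fixed integer satisfying $t_0 \geq t$. Conditioned on $W \cap V_H = A$, $|W \cap V_L^*(A)|$ is hypergeometric, and the probability it equals $t_0$ converges to $\binom{k-s}{t_0} q_A^{t_0}(1-q_A)^{k-s-t_0}$ (where $q_A \coloneqq |V_L^*(A)|/|V_L|$), with maximum over $q_A \in [0,1]$ equal to $\frac{t_0^{t_0}}{t_0!e^{t_0}}(1 + o_k(1))$. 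Since $t \mapsto t^t/(t!e^t)$ is strictly decreasing in $t \geq 1$ (easily verified from $(1+1/t)^t < e$), this is at most $\frac{t^t}{t!e^t}(1 + o_k(1))$. Combining with the Part (1) estimate on $\IP[|W \cap V_H| = s]$ gives the Part (2) bound.

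The main obstacle will be the uniform degree-concentration step: the deviation must be $o(\delta k)$ while the union bound over $k$ vertices keeps the failure probability $o_k(1)$. Hoeffding's inequality with deviation of order $\sqrt{k \log k}$ accomplishes both, but only once $k$ is large compared with $\delta^{-1}$. The rest of the argument is elementary, each part reducing to maximizing a binomial or hypergeometric point mass together with routine asymptotic analysis.
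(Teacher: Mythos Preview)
Your setup and Part (1) match the paper's approach: partition $V(G)$ at the threshold $(a+b)n/2$, use hypergeometric concentration plus a union bound to force $W\cap V_H$ to play the role of $S$, then condition on $W\cap V_H=A$ so that $W\setminus A$ is uniform in $\binom{V_L}{k-s}$ and must induce a copy of $H'$. That part is fine.

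Part (2), however, has a genuine gap. You bound $\IP[|W\cap V_L^*(A)| = t_0]$ by the maximum binomial point mass and then claim this maximum is $\frac{t_0^{t_0}}{t_0!\,e^{t_0}}(1+o_k(1))$, hence at most $\frac{t^t}{t!\,e^t}(1+o_k(1))$ by monotonicity. But the asymptotic $\max_q \binom{k-s}{t_0}q^{t_0}(1-q)^{k-s-t_0} = \frac{t_0^{t_0}}{t_0!\,e^{t_0}}(1+o_k(1))$ is only valid for $t_0$ bounded; the uniform bound $\le \frac{t^t}{t!\,e^t}+o_k(1)$ requires $t\le t_0\le (k-s)-t$. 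The hypothesis only gives $t_0=|T|\ge t$, and $t_0$ can be as large as $k-s$ (e.g.\ when no vertex of $V(H)\setminus S$ is adjacent to all of $S$). For $t_0$ close to $k-s$ the point mass can be near $1$: if $V_L^*(A)=V_L$ then $\IP[|W\cap V_L^*(A)|=k-s]=1$. So your conditional bound collapses precisely when $|T|$ is large.

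The paper closes this gap with a separate argument for large $|T|$. When $|T|\ge sP$ for a suitable constant $P=P(s,t)$, some $v\in S$ satisfies $(k-s)-\tau\ge P$ where $\tau:=d_{V(H)\setminus S}(v)$; combined with $\tau\ge bk-s\ge \delta k - s\ge P$, this places $\tau$ in the ``safe'' range $[P,(k-s)-P]$. The paper then uses the event that some $w\in A$ has $d_{W\setminus A}(w)=\tau$, which is necessary for $G[W]\cong H$, and bounds its probability by a union bound over the $s$ vertices of $A$ together with the hypergeometric point-mass estimate at $\tau$. Choosing $P$ so that $s\cdot\frac{P^P}{P!\,e^P}\le \frac{t^t}{t!\,e^t}$ finishes the job. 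Your argument is essentially the paper's ``small $|T|$'' case; you are missing this complementary ``large $|T|$'' case.
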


\begin{lemma}\label{lemma:ind-if-almost-uniform-low-maximum-degree}
    Fix positive real numbers $\epsilon$ and $\beta$ and a positive integer $\tau$ and define $f \coloneqq \left\lfloor\sqrt{\frac{\beta}{\tau\epsilon}}\right\rfloor$. Consider a positive integer $k$ and a graph $H$ with $k$ vertices. Assume that $\Delta(H) \leq \epsilon k$ and that at least $\beta k$ vertices in $H$ have degree exactly $\tau$. Then, we have
    \begin{equation*}
        \ind(H) \leq 2 \left(\frac{1}{\beta} \cdot \frac{\tau^\tau}{\tau!e^\tau}\right)^f + o_k(1).
    \end{equation*}
\end{lemma}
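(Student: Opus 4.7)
The plan is to apply Markov's inequality to the $f$-th moment of the random variable $T$ that counts the vertices of $U$ having degree exactly $\tau$ in $G[U]$, where $G$ is a large $n$-vertex graph and $U \subseteq V(G)$ is a uniformly random $k$-subset. Since an isomorphism $G[U] \cong H$ preserves the degree sequence, $T \geq \beta k$ whenever $G[U] \cong H$, and therefore
\[
\ind(H, G) \leq \Pr[T \geq \beta k] \leq \mathbb{E}[T^f] / (\beta k)^f.
\]
The task reduces to proving $\mathbb{E}[T^f] \leq 2 k^f \bigl(\tau^\tau/(\tau! e^\tau)\bigr)^f + o_k(1)$.

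Expanding the moment, $\mathbb{E}[T^f] = \sum_{\vec u \in V(G)^f}\Pr[\vec u \subseteq U,\, d_{G[U]}(u_i) = \tau\,\forall i]$. For each ordered tuple of distinct vertices, $\Pr[\vec u \subseteq U] \approx (k/n)^f$ as $n\to\infty$. I then bound the conditional probability $\Pr[d_{G[U]}(u_i) = \tau\,\forall i \mid \vec u \subseteq U]$: when the $u_i$ have pairwise disjoint closed neighborhoods in $G$, the $f$ degree events are asymptotically independent hypergeometric events, each bounded above by the Poisson maximum $\tau^\tau/(\tau! e^\tau)\cdot(1 + o_k(1))$, giving the clean product bound. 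Summing over at most $n^f$ such ``good'' tuples yields the main term $k^f(\tau^\tau/(\tau! e^\tau))^f(1 + o_k(1))$; divided by $(\beta k)^f$ this is already the lemma's bound without the factor $2$. For ``bad'' tuples where some pair $u_i, u_j$ shares a common neighbor $w$ in $G$, the extra correlation only inflates the joint probability when $w \in U$, an event of probability $O(k/n) = o_n(1)$; this contributes a multiplicative factor of $1 + o_n(1)$ per bad pair, so across the at most $\binom{f}{2}$ bad pairs in a tuple the total correction is $(1 + o_n(1))^{O(f^2)}$, which tends to $1$ as $n \to \infty$.

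The main obstacle is controlling these corrections uniformly and verifying the Poisson-type bound across all relevant configurations. The hypothesis $\Delta(H) \leq \epsilon k$ enters via the degree sequence of any $G$ compatible with a non-negligible probability of $G[U] \cong H$, constraining its density so that bad-pair contributions remain sub-dominant. The scaling $f = \lfloor\sqrt{\beta/(\tau\epsilon)}\rfloor$ is calibrated so that $f\tau/k \to 0$ and the $f$-fold accumulated Poisson approximation error is absorbed into the lemma's prefactor of $2$. A useful technical enabler is a greedy selection of $f$ degree-$\tau$ vertices in $H$ at pairwise graph distance $\geq 3$; such a selection exists because each such vertex blocks at most $1 + \tau\Delta(H) \leq 1 + \tau\epsilon k$ positions of $V_\tau := \{v \in V(H) : d_H(v) = \tau\}$, while $|V_\tau| \geq \beta k$ leaves room for $\beta/(2\tau\epsilon) \geq f$ such vertices in the non-trivial regime $f \geq 2$, and this selection identifies the ``independent'' events available for the analysis.
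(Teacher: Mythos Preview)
Your Markov approach on $T^f$ has a genuine gap: the moment $\mathbb{E}[T^f]$ is \emph{not} controlled for arbitrary $G$, and your treatment of ``bad'' tuples does not fix this. Consider a graph $G$ containing $\Theta(n)$ vertices all with the same neighbourhood $N$ (and no edges among themselves). For any $f$-tuple $\vec u$ drawn from these vertices, the events $d_{G[U]}(u_i)=\tau$ all coincide with the single event $|N\cap U|=\tau$, so the conditional probability is $\approx \tau^\tau/(\tau!e^\tau)$, not its $f$-th power. Summing gives $\mathbb{E}[T^f]\gtrsim k^f\cdot \tau^\tau/(\tau!e^\tau)$, and Markov only yields $\Pr[T\ge\beta k]\le \beta^{-f}\tau^\tau/(\tau!e^\tau)$, which is strictly weaker than the lemma (and does not decay in $f$). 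Your heuristic that the correlation ``only inflates the joint probability when $w\in U$'' fails precisely because two $u_i$'s may share a linear number of common neighbours in $G$; and invoking $\Delta(H)\le\epsilon k$ to ``constrain $G$'' is circular, since the lemma must hold for every $G$, including those where $\mathrm{ind}(H,G)$ is tiny.

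What the paper does---and what your last paragraph gestures at but never implements---is to replace $\binom{T}{f}$ by the smaller count
\[
|F|=\#\Bigl\{A\in\tbinom{U}{f}:\ \forall\,\{v,w\}\in\tbinom{A}{2},\ \{v,w\}\notin E(G)\text{ and }N_W(v)\cap N_W(w)=\emptyset\Bigr\}.
\]
The disjointness is required inside the \emph{random} set $W$, not in $G$, so for any $G$ the conditional probability $\Pr[A\in F\mid A\subseteq W]$ is cleanly bounded by $(\tau^\tau/(\tau!e^\tau))^f+o_k(1)$ via a multivariate hypergeometric estimate. The hypothesis $\Delta(H)\le\epsilon k$ enters only in the \emph{lower} bound: on the event $G[W]\cong H$, one shows $|F|\ge(\tfrac12-o_k(1))\beta^f\binom{k}{f}$, since at most a $\tau\epsilon\beta^{-1}\binom{f}{2}\le\frac12$ fraction of $f$-subsets of $U$ violate the disjointness condition. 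Comparing the two bounds on $\mathbb{E}[|F|]$ gives the lemma, with the factor $2$ coming from this last step. Your greedy selection of $f$ far-apart vertices in $H$ is the right spirit for this lower bound, but you need to carry it out on the $W$-side count rather than trying to rescue the raw moment of $T$.
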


The following lemma is a consequence of Martinsson et al.~\cite[Claim 3.3]{martinsson-mousset-noever-trujic:2020}. An alternative proof can be found in the author's bachelor's thesis~\cite[Section 6]{ueltzen:bs-thesis}.
\begin{lemma}\label{lemma:ind-if-not-almost-uniform}
    Fix positive real numbers $C$ and $\beta < 1$. Consider positive integers $k$ and $\ell$ with $\ell \leq Ck$ and a graph $H$ with $k$ vertices and $\ell$ edges. Let us assume that for all nonnegative integers $i$, we have $k_i \coloneqq |\{v \in V(H) \mid \degree{v} = i\}| \leq \beta k$.
    Then, it follows that
    \begin{equation*}
        \ind(H) \leq o_k(1).
    \end{equation*}
\end{lemma}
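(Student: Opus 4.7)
The plan is to apply the approach that Martinsson, Mousset, Noever, and Truji\'c use for their Claim~3.3~\cite{martinsson-mousset-noever-trujic:2020}, which yields a result of this exact form. Fix $\gamma > 0$ and let $G$ be a large graph on $n$ vertices with $\indd{H}{G}$ within $o_{n \mid k}(1)$ of $\ind(H)$; the goal is to show $\indd{H}{G} \leq \gamma$ once $k$ is sufficiently large. The strategy is the sequential-sampling / conditioning framework sketched in \cref{section:introduction}: pick $v_1, \ldots, v_k \in V(G)$ uniformly at random and independently, consider the event $\mathcal{E} := \{G[\{v_1, \ldots, v_k\}] \cong H\}$, and compare the conditional and unconditional distributions of a carefully chosen statistic $X = X(v_1, \ldots, v_k)$ sensitive to the degree sequence.

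Under $\mathcal{E}$ the induced degree multiset of the sample equals the degree multiset of $H$; by the hypothesis $k_d \leq \beta k$, this multiset is \emph{spread out}, so no single value accounts for more than a $\beta$-fraction of the sampled vertices. A natural candidate is $X := |\{i \leq k : \deg_{G[\{v_1,\ldots,v_k\}]}(v_i) = d^\star\}|$ for a well-chosen target degree $d^\star$ (or an aggregate over several targets), which under $\mathcal{E}$ takes the deterministic value $k_{d^\star} \leq \beta k$.

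Unconditionally, $X$ is a sum of indicator variables whose parameters are determined by the scaled degrees $\tilde d(v) := \deg(v) (k-1)/(n-1)$. Since $\ell \leq Ck$, a typical $\tilde d(v)$ is $O(1)$, and by standard Poisson / local central limit estimates the unconditional probability that $X$ equals any fixed value is $o_k(1)$ --- unless $G$ has a highly concentrated degree distribution, which however would contradict the spread degree multiset of $H$ required under $\mathcal{E}$. A careful quantitative comparison of the two distributions then yields $\indd{H}{G} \leq o_k(1)$.

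The main obstacle lies in making the conditional concentration rigorous: conditioning on $\mathcal{E}$ couples the $v_i$'s strongly, so one cannot apply Chernoff--Hoeffding directly. The standard route, which is the combinatorial heart of~\cite[Claim~3.3]{martinsson-mousset-noever-trujic:2020}, is to combine the random sampling with a uniformly random isomorphism $\{v_1, \ldots, v_k\} \to V(H)$ and to exploit exchangeability (via Azuma / Efron--Stein) on the resulting symmetric process. Carrying this out while using $\ell \leq Ck$ to control the Poisson regime of the unconditional distribution is where one would either invoke~\cite[Claim~3.3]{martinsson-mousset-noever-trujic:2020} directly or reproduce it, as is done in the author's bachelor's thesis.
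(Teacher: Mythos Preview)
Your proposal and the paper land in the same place: both ultimately invoke Martinsson--Mousset--Noever--Truji\'c, Claim~3.3. The paper's argument is a one-line deduction from that claim used as a black box: Claim~3.3 says that for $\ell \leq o_k(k^{6/5})$ the event $\{|E(G[W])| = \ell\}$ is, up to $o_k(1)+o_{n\mid k}(1)$ probability, contained in the event that $(1-o_k(1))k$ vertices of $G[W]$ share a common induced degree. Since $G[W]\cong H$ forces $|E(G[W])|=\ell$ \emph{and} no degree class of size exceeding $\beta k$, it lies in the exceptional set, and $\ind(H)\le o_k(1)$ follows immediately. You do not need to set up any statistic $X$ yourself; the containment statement already does all the work.

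Your attempt to sketch a \emph{proof} of Claim~3.3, however, has a genuine gap. With your choice of $X$ (the number of sampled vertices with induced degree $d^\star$), $X$ is already deterministic under $\mathcal E$, so there is no ``conditional concentration obstacle'' to overcome; the difficulty is entirely on the unconditional side. There it is simply not true that $\IP[X=m]=o_k(1)$ for every host graph $G$: for many $G$ (e.g.\ when the rescaled degrees $\tilde d(v)$ are nearly constant) $X$ is sharply concentrated, so hitting a fixed value has probability bounded away from $0$. Your escape clause (``unless $G$ has a highly concentrated degree distribution, which would contradict the spread degree multiset of $H$'') is circular: that contradiction is precisely what you are trying to establish, and your argument gives no quantitative link between concentration of $G$'s degree profile and smallness of $\IP[\mathcal E]$. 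The actual mechanism in~\cite{martinsson-mousset-noever-trujic:2020} is different: it does not fix a target degree $d^\star$ but controls the variance of the induced edge count to force the induced degree sequence to be nearly constant whenever the edge count hits $\ell$ exactly. If you want a self-contained proof rather than the black-box citation, that is the idea to reproduce.
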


Martinsson et al.~[8, Claim 3.3] showed that for $\ell \leq o_k(k^{6/5})$, for a graph $G$ on $n$ vertices and a uniformly random $W \in \binom{V(G)}{k}$, the event $|E(G[W])| = \ell$ is essentially contained in the event that $(1-o_k(1))k$ vertices in $G[W]$ have the same degree in $G[W]$ (here, "$\E$ essentially contained in $\F$" means that $\IP[\F \backslash \E] \leq o_k(1)+o_{n \mid k}(1)$). Taking a graph $H$ as in Lemma 3.3, and setting $\ell=|E(H)|$, this implies that the probability of having $|E(G[W])|=|E(H)|$ and having no more than $\beta k$ vertices in $G[W]$ of the same degree is $o_k(1)+o_{n|k}(1)$. In particular, the event $G[W] \cong H$ has probability $o_k(1)+o_{n|k}(1)$, meaning that $\ind(H) = o_k(1)$ as stated in Lemma 3.3.

The following claim shows when the conditions for applying \cref{lemma:ind-if-high-degree-vtcs} can be met.
\begin{claim}\label{claim:conditions-for-lemma:ind-if-high-degree-vtcs}
    For any fixed positive real numbers $C$ and $\epsilon$, there exists a fixed $\delta = \delta(C, \epsilon) > 0$ satisfying the following property: Consider positive integers $k$ and $\ell$ with $\ell \leq Ck$ and a graph $H$ with $k$ vertices and $\ell$ edges, satsifying $\Delta(H) \geq \epsilon k$. Then, there exist positive real numbers $a, b$ with $a + \delta \leq b \leq \epsilon$ such that no vertex in $H$ has degree in the interval $(ak, bk)$ and $s \coloneqq |\{v \in V(H) \mid \degree{v} \geq bk\}|$ satisfies $1 \leq s \leq \frac{2C}{\delta}$.
\end{claim}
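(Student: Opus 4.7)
The plan is to set $\delta$ proportional to $\epsilon^2/C$ (concretely, $\delta := \epsilon^2/(16C)$) and locate $a, b$ by a weighted pigeonhole argument on the degree sequence of $H$. The main handle is the elementary bound $\sum_{v \in V(H)} \degree{v} = 2\ell \leq 2Ck$, which via Markov yields two useful facts: (i) for any $b > 0$, the number of vertices with $\degree{v} \geq bk$ is at most $2C/b$ (so any choice of $b \geq \delta$ automatically satisfies $s \leq 2C/\delta$); and (ii) $\Delta(H) \leq \ell \leq Ck$, which combined with the hypothesis $\Delta(H) \geq \epsilon k$ gives $C \geq \epsilon$. Fact (ii) will make the pigeonhole contradiction work.

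Next, I would consider the candidate pairs $(a_j, b_j) := ((j-1)\delta, j\delta)$ for integers $2 \leq j \leq M := \lfloor \epsilon/\delta \rfloor$. Each $b_j$ lies in $[2\delta, \epsilon]$, and $a_j \geq \delta > 0$ with $a_j + \delta = b_j \leq \epsilon$. If some $j$ has the property that no vertex of $H$ has degree in the open interval $((j-1)\delta k, j\delta k)$, I set $a := a_j$ and $b := b_j$. Then $(a,b)$ satisfies all required inequalities, while $s \geq 1$ follows from $\Delta(H) \geq \epsilon k \geq b k$ and $s \leq 2C/b \leq 2C/\delta$ follows from fact (i).

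The key step is to show that at least one such $j$ works. Suppose for contradiction that every interval $((j-1)\delta k, j\delta k)$, $j = 2, \dots, M$, contains at least one vertex degree. Picking one vertex $v_j$ per interval yields $M - 1$ distinct vertices with $\degree{v_j} > (j-1)\delta k$. Summing, $\sum_{j=2}^{M} \degree{v_j} > \delta k \cdot M(M-1)/2$, and this is bounded above by the total degree sum $2Ck$, so $M(M-1) < 4C/\delta$. With $\delta = \epsilon^2/(16C)$ one has $4C/\delta = 64 C^2/\epsilon^2$ and $M \geq 16 C/\epsilon - 1$; since $C \geq \epsilon$ gives $16C/\epsilon \geq 16$, a routine algebraic check shows $M(M-1) > 4C/\delta$ (both sides scale like $C^2/\epsilon^2$, and the constants are arranged so the left side dominates). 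This is the contradiction, so some $j$ succeeds.

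The main obstacle is precisely this final comparison, which is why the pigeonhole needs to be weighted. A naive unweighted argument—there are at most $2C/\delta$ vertex degrees in $[\delta k, \epsilon k]$, so one of the $\lfloor \epsilon/\delta\rfloor$ equal-width subintervals of width $\delta$ must be empty—requires $\epsilon/\delta > 2C/\delta$, i.e., $\epsilon > 2C$, which need not hold. Weighting each interval by its lower endpoint $(j-1)\delta$ uses that a vertex of larger degree eats more of the $2C$ budget, and this is exactly what allows $\delta$ to be taken as small as a constant multiple of $\epsilon^2/C$ while still keeping $s \leq 2C/\delta$.
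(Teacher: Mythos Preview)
Your proof is correct and follows essentially the same weighted pigeonhole argument as the paper: partition $[\delta,\epsilon]$ into length-$\delta$ subintervals, and if each contained a vertex degree then summing the lower endpoints would exceed the total degree $2\ell \leq 2Ck$. The only cosmetic difference is that the paper simply fixes $\delta$ small enough that $\sum_{1\le i\le \epsilon/\delta-1}\delta i>2C$, whereas you make the choice explicit as $\delta=\epsilon^2/(16C)$ and use the observation $\epsilon\le C$ (from $\epsilon k\le \Delta(H)\le \ell\le Ck$) to verify the numerics.
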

\begin{proof}
    We choose $\delta = \delta(C, \epsilon) > 0$ sufficiently small such that $\sum_{\substack{1 \leq i \leq \epsilon / \delta -1}} \delta i > 2C$.
    Therefore, by $\ell \leq Ck$,
    \begin{equation*}
        \sum_{\substack{1 \leq i \leq \epsilon / \delta -1}} \delta i k > 2Ck \geq 2\ell = \sum_{v \in V(H)} \degree{v}.
    \end{equation*}
    Hence, there exists a positive integer $i \leq \epsilon / \delta - 1$ such that no vertex in $H$ has degree in $(\delta i k, \delta (i+1) k)$. We define $a \coloneqq \delta i, b \coloneqq \delta (i+1)$ so that no vertex in $H$ has degree in $(ak, bk)$. Moreover, we have $b - a = \delta$ and $b = \delta(i+1) \leq \epsilon$. By $bk \leq \epsilon k \leq \Delta(H)$, it follows that $s = |\{v \in V(H) \mid \degree{v} \geq bk\}| \geq 1$. In addition,
    \begin{equation*}
        sbk \leq \sum_{v \in V(H)} \degree{v} = 2\ell \leq 2Ck,
    \end{equation*}
    where we used $\ell \leq Ck$. Since $b \geq b-a = \delta$, we conclude that
    \begin{equation*}
        s \leq \frac{2C}{b} \leq \frac{2C}{\delta}.\qedhere
    \end{equation*}
\end{proof}

\subsection{Proof of Theorem~\ref{theorem:main-result-for-denser-graphs}}\label{subsection:proof-of-theorem:main-result-for-denser-graphs}

Suppose that $\alpha > 0$ and $C$ are fixed. Let us fix $\epsilon = \epsilon(C) > 0$ such that
\begin{equation}\label{equation:epsilon-condition1}
    2 \left(\frac{2}{e}\right)^{\sqrt{1 / (8C\epsilon)} - 1} \leq \frac{2}{e^2}.
\end{equation}
Let $k$ and $\ell$ be positive integers with $\ell \leq Ck$ and let $H$ be a graph with $k$ vertices and $\ell$ edges satisfying $H \not\cong K_{1,k-1}$ and $m(H) \geq \alpha k$. We first consider the case $\Delta(H) \leq \epsilon k$. For any nonnegative integer $i$, denote the number of vertices with degree $i$ in $H$ by $k_i$ and define $\beta \coloneqq \max\{1-\frac{\alpha}{2}, \frac{1}{2}\} < 1$, so that $k_0 = k - m(H) \leq (1 - \alpha)k < \beta k$ by $m(H) \geq \alpha k$. If $k_i \leq \beta k$ for all nonnegative integers $i$, then \cref{lemma:ind-if-not-almost-uniform} with $\ell \leq Ck$ implies $\ind(H) \leq o_k(1)$.
Now, let us assume that for some nonnegative integer $\tau$, we have $k_\tau \geq \beta k$. Since $k_0 < \beta k$, it follows that $\tau \neq 0$ and thus, $\tau \geq 1$. By definition of $k_\tau$, we have $k_\tau \tau \leq \sum_{v \in V(H)} \degree{v} \leq 2\ell$. Together with $\beta \geq \frac{1}{2}$ and $k_\tau \geq \beta k$, it follows that
\begin{equation*}
    \frac{k \tau}{2} \leq \beta k \tau \leq k_\tau \tau \leq 2\ell \leq 2Ck,
\end{equation*}
where we used $\ell \leq Ck$.
Hence, $\tau \leq 4C$. \cref{lemma:poisson-point-probability-decreasing} with $\tau \geq 1$ implies $\frac{\tau^\tau}{\tau!e^\tau} \leq \frac{1^1}{1!e^1} = \frac{1}{e}$.
By $\beta \geq \frac{1}{2}$ and $\tau \leq 4C$, we have $f \coloneqq \left\lfloor\sqrt{\frac{\beta}{\tau\epsilon}}\right\rfloor \geq \sqrt{\frac{1}{8C\epsilon}} - 1$ and thus,
\begin{equation*}
    2 \left(\frac{1}{\beta} \cdot \frac{\tau^\tau}{\tau!e^\tau}\right)^f \leq 2 \left(\frac{2}{e}\right)^f \leq 2 \left(\frac{2}{e}\right)^{\sqrt{1 / (8C\epsilon)} - 1} \leq \frac{2}{e^2},
\end{equation*}
where we used $\frac{2}{e} \leq 1$ and \cref{equation:epsilon-condition1}. Since $k_\tau \geq \beta k$ and $\Delta(H) \leq \epsilon k$, it follows from \cref{lemma:ind-if-almost-uniform-low-maximum-degree} with $1 \leq \tau \leq 4C \leq O(1)$ that
\begin{equation*}
    \ind(H) \leq 2 \left(\frac{1}{\beta} \cdot \frac{\tau^\tau}{\tau!e^\tau}\right)^f + o_k(1) \leq \frac{2}{e^2} + o_k(1).
\end{equation*}

We now consider the case that $\Delta(H) \geq \epsilon k$. By \cref{claim:conditions-for-lemma:ind-if-high-degree-vtcs}, and since $\epsilon = \epsilon(C)$ is fixed, there exists a fixed $\delta = \delta(C) > 0$ such that there are real numbers $a, b \in (0, 1)$ with $b-a \geq \delta$, where no vertex in $H$ has degree in the interval $(ak, bk)$, and $S \coloneqq \{v \in V(H) \mid \degree{v} \geq bk\}$ has size $s \coloneqq |S|$ with $1 \leq s \leq \frac{2C}{\delta} \leq O(1)$. \cref{lemma:ind-if-high-degree-vtcs} therefore implies $\ind(H) \leq \frac{s^s}{s!e^s} + o_k(1)$. For $s \geq 2$, \cref{lemma:poisson-point-probability-decreasing} implies
\begin{equation*}
    \ind(H) \leq \frac{s^s}{s!e^s} + o_k(1) \leq \frac{2^2}{2!e^2} + o_k(1) = \frac{2}{e^2} + o_k(1).
\end{equation*}
Now, suppose that $s \leq 1$ and therefore, $s = 1$. As in the statement of \cref{lemma:ind-if-high-degree-vtcs}, define $H' \coloneqq H[V(H) \backslash S]$ and $T \coloneqq \{v \in V(H) \backslash S \mid \neighbors[S]{v} \neq S\}$. If $|T| \geq 1$, then by \cref{lemma:ind-if-high-degree-vtcs} applied to $s = 1$ and $t = 1$,
\begin{equation*}
    \ind(H) \leq \frac{s^s}{s!e^s} \cdot \frac{t^t}{t!e^t} + o_k(1) = \frac{1}{e^2} + o_k(1) \leq \frac{2}{e^2} + o_k(1).
\end{equation*}
Now, assume that $|T| = 0$. By $s = 1$ and $H \not\cong K_{1,k-1}$, we conclude that $E(H') \neq \emptyset$. Moreover, $H'$ has $k-s = k-1$ vertices and $|E(H')| \leq |E(H)| \leq Ck$ edges, which implies that $H'$ is not complete for sufficiently large $k$. We conclude by \cref{theorem:edge-statistics-conjecture} that $\ind(H') \leq \frac{1}{e} + o_k(1)$.
Hence, by \cref{lemma:ind-if-high-degree-vtcs} and $s = 1$, we obtain
\begin{equation*}
    \ind(H) \leq \frac{s^s}{s!e^s} \cdot \ind(H') + o_k(1) \leq \frac{1}{e^2} + o_k(1) \leq \frac{2}{e^2} + o_k(1).
\end{equation*}

\subsection{Proof of Theorem~\ref{theorem:weak-characterization-of-high-inducibility-graphs}}\label{subsection:proof-of-theorem:weak-characterization-of-high-inducibility-graphs}

The proof is based on the following observation (see also \cref{figure:observation-H-tame}):
\begin{fact}\label{fact:s-t-u-tame-h}
    For a graph $H$ and a set $S \subseteq V(H)$, define $T \coloneqq \{v \in V(H) \backslash S \mid \neighbors[S]{v} \neq S\}$, and $U \coloneqq \{v \in V(H) \backslash S \mid \neighbors[V(H) \backslash S]{v} \neq \emptyset\}$. Then, $V_0 \coloneqq S \cup T \cup U$ tames $H$.
\end{fact}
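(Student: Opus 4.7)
The plan is to read off the two equivalent conditions for tameness directly from an explicit description of $V(H) \setminus V_0$. A vertex $w$ lies in $V(H) \setminus V_0 = V(H) \setminus (S \cup T \cup U)$ precisely when $w \notin S$, $w \notin T$, and $w \notin U$. Unpacking the definitions of $T$ and $U$, this is equivalent to saying that $w \in V(H) \setminus S$, that $\neighbors[S]{w} = S$, and that $\neighbors[V(H) \setminus S]{w} = \emptyset$. So every $w \in V(H) \setminus V_0$ is adjacent to all of $S$ and to no vertex outside $S$.

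From this characterization both conditions for $V_0$ to tame $H$ follow immediately. For the first, any two vertices $w_1, w_2 \in V(H) \setminus V_0$ both lie in $V(H) \setminus S$, and each of them has no neighbor in $V(H) \setminus S$; hence $w_1 w_2 \notin E(H)$, so $V(H) \setminus V_0$ is a stable set. For the second, fix $v \in V_0$. If $v \in S$, then by the characterization above every $w \in V(H) \setminus V_0$ is adjacent to $v$, so $v$ is joined to all of $V(H) \setminus V_0$. If instead $v \in V_0 \setminus S$, then $v \in V(H) \setminus S$, and no $w \in V(H) \setminus V_0$ is adjacent to $v$ because $w$ has no neighbor in $V(H) \setminus S$; thus $v$ is joined to no vertex of $V(H) \setminus V_0$.

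The argument is purely a matter of unfolding the definitions, so I do not anticipate any genuine obstacle. The only point that deserves a moment of attention is the asymmetric role of $S$ and $V(H) \setminus S$ in the definitions of $T$ and $U$: $T$ collects vertices outside $S$ that fail to see all of $S$, while $U$ collects vertices outside $S$ that see at least one other vertex outside $S$. Removing both of these bad types, in addition to $S$ itself, is exactly what forces the complement $V(H) \setminus V_0$ to be a stable set each of whose vertices has the same neighborhood $S$ in $V_0$, which is what the tameness conditions demand.
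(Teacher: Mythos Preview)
Your proof is correct and follows essentially the same approach as the paper: both verify the two tameness conditions by unpacking the definitions of $T$ and $U$. Your presentation is marginally more streamlined, since you first give an explicit description of $V(H)\setminus V_0$ and then split $V_0$ into only two cases ($S$ versus $V_0\setminus S$), whereas the paper argues via the intermediate set $V(H)\setminus(S\cup U)$ and treats the cases $v\in S$, $v\in U$, and $v\in V_0\setminus(S\cup U)$ separately; the underlying reasoning is the same.
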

\begin{proof}
    By the choice of $U$, $V(H) \backslash (S \cup U)$ is a stable set and each vertex in $U$ is adjacent to no vertex in $V(H) \backslash (S \cup U)$. Therefore, $V(H) \backslash V_0 \subseteq V(H) \backslash (S \cup U)$ is also a stable set. Suppose that $v \in V_0$. If $v \in S$, $v$ is adjacent to all vertices in $V(H) \backslash V_0$ by the choice of $T$ and if $v \in U$, $v$ is adjacent to no vertex in $V(H) \backslash V_0 \subseteq V(H) \backslash (S \cup U)$. Finally, if $v \in V_0 \backslash (S \cup U)$, then $v$ is not connected to any vertex in $V(H) \backslash V_0 \subseteq V(H) \backslash (S \cup U)$ since $V(H) \backslash (S \cup U)$ is a stable set.
\end{proof}
We will apply this fact for the set $S$ of high-degree vertices and use the bound $|V_0| \leq |S| + |T| + |U|$.
\begin{figure}[ht]
    \centering
    \includegraphics{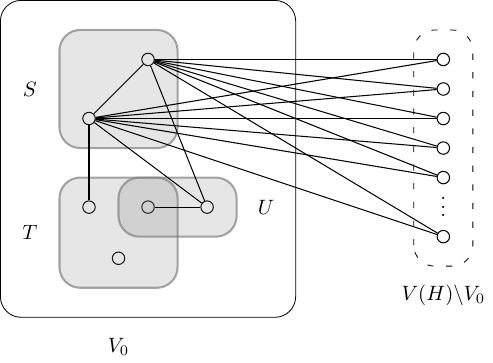}
    \caption{The graph $H$ is tamed by $V_0 = S \cup T \cup U$.}
    \label{figure:observation-H-tame}
\end{figure}

Let $\gamma > 0$ and $C$ be fixed and fix $\epsilon = \epsilon(C, \gamma) > 0$ such that
\begin{equation}\label{equation:epsilon-condition2}
    2 \left(\frac{2}{e}\right)^{\sqrt{1 / (16C\epsilon)} - 1} \leq \frac{\gamma}{2}.
\end{equation}
\begin{claim}\label{claim:weak-characterization-of-high-inducibility-low-max-degree-graphs}
    There exists a fixed positive integer $D_1 = D_1(\gamma)$ with the following property: Consider positive integers $k$ and $\ell$ with $\ell \leq 2Ck$ and a graph $H$ with $k$ vertices and $\ell$ edges. Suppose that $\Delta(H) \leq \epsilon k$ and $m(H) \geq D_1$. Then, we have
    \begin{equation*}
        \ind(H) \leq \frac{\gamma}{2} + o_k(1).
    \end{equation*}
\end{claim}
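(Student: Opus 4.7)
The plan is to split on whether $m(H)$ is linear or sublinear in $k$: the sublinear regime is handled by \cref{theorem:fox-sauermann:2020}, while the linear regime reduces to a direct repetition of the $\Delta(H) \leq \epsilon k$ branch of the proof of \cref{theorem:main-result-for-denser-graphs}. Concretely, I would set $D_1 \coloneqq D(\gamma/2)$, where $D$ is the constant produced by \cref{theorem:fox-sauermann:2020}. If $D_1 \leq m(H) \leq k/32$, that theorem immediately yields $\ind(H) \leq \gamma/2$, so the only remaining case is $m(H) > k/32$.

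In that case $m(H) \geq \alpha k$ for $\alpha \coloneqq 1/32$, and I would take $\beta \coloneqq 63/64$. Writing $k_i$ for the number of vertices of degree $i$ in $H$, the chain $k_0 = k - m(H) < 31k/32 < \beta k$ holds. If $k_i \leq \beta k$ for every nonnegative integer $i$, then \cref{lemma:ind-if-not-almost-uniform}, applied with the edge bound $\ell \leq 2Ck$, gives $\ind(H) \leq o_k(1)$. Otherwise there exists $\tau \geq 1$ with $k_\tau \geq \beta k$ (the case $\tau = 0$ is excluded by the bound on $k_0$), and the handshake lemma combined with $\beta \geq 1/2$ and $\ell \leq 2Ck$ yields $k\tau/2 \leq k_\tau \tau \leq 2\ell \leq 4Ck$, so $\tau \leq 8C$.

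At this stage I would invoke \cref{lemma:ind-if-almost-uniform-low-maximum-degree}. The integer $f \coloneqq \lfloor\sqrt{\beta/(\tau\epsilon)}\rfloor$ satisfies $f \geq \sqrt{1/(16C\epsilon)} - 1$, and since $\tau \geq 1$, \cref{lemma:poisson-point-probability-decreasing} gives $\tau^\tau/(\tau!e^\tau) \leq 1/e$; together with $1/\beta \leq 2$ this produces $2((1/\beta)\tau^\tau/(\tau!e^\tau))^f \leq 2(2/e)^{\sqrt{1/(16C\epsilon)} - 1} \leq \gamma/2$ by \cref{equation:epsilon-condition2}, where I use that $2/e \leq 1$ so the exponent can be replaced by any smaller quantity. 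Applying \cref{lemma:ind-if-almost-uniform-low-maximum-degree} with $\Delta(H) \leq \epsilon k$, $k_\tau \geq \beta k$ and $1 \leq \tau \leq 8C$ then yields $\ind(H) \leq \gamma/2 + o_k(1)$, finishing the claim.

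The main obstacle relative to the proof of \cref{theorem:main-result-for-denser-graphs} is the absence of any a priori linear lower bound on $m(H)$: without such a bound $k_0$ cannot be pushed strictly below $\beta k$ for any fixed $\beta < 1$, so the almost-uniform-degree dichotomy built into \cref{lemma:ind-if-not-almost-uniform,lemma:ind-if-almost-uniform-low-maximum-degree} collapses onto the degree-$0$ bucket and becomes useless. This is precisely the role of \cref{theorem:fox-sauermann:2020} in the argument: it bounds $\ind(H)$ in exactly the regime where the non-isolated part of $H$ is of merely constant size, which is the case the rest of the machinery cannot address.
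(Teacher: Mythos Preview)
Your proof is correct and follows essentially the same approach as the paper. The only difference is organizational: the paper sets $\beta = 31/32$, runs the degree-distribution dichotomy first, and then invokes \cref{theorem:fox-sauermann:2020} inside the $\tau = 0$ subcase (where $k_0 \geq \beta k$ forces $m(H) \leq k/32$), whereas you peel off the range $D_1 \leq m(H) \leq k/32$ up front and then run the dichotomy with $\beta = 63/64$ in the remaining case; both orderings lead to the same three-way split and the same invocations of \cref{lemma:ind-if-not-almost-uniform}, \cref{lemma:ind-if-almost-uniform-low-maximum-degree}, and \cref{theorem:fox-sauermann:2020}.
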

\begin{proof}
    Let $D_1 \coloneqq D(\frac{\gamma}{2})$ be the constant from \cref{theorem:fox-sauermann:2020}. For any nonnegative integer $i$, let $k_i$ be the number of vertices of degree $i$ in $H$ and define $\beta \coloneqq \frac{31}{32} \in [\frac{1}{2}, 1)$. If $k_i \leq \beta k$ for all integers $i$, then by \cref{lemma:ind-if-not-almost-uniform} with $\ell \leq 2Ck$, we have $\ind(H) \leq o_k(1)$.
    Now, let us assume that for some nonnegative integer $\tau$, we have $k_\tau \geq \beta k$. Therefore, $k_\tau \tau \leq \sum_{v \in V(H)} \degree{v} \leq 2\ell$ and hence, together with $\beta \geq \frac{1}{2}$ and $\ell \leq 2Ck$, it follows that
    \begin{equation*}
        \frac{k \tau}{2} \leq \beta k \tau \leq k_\tau \tau \leq 2\ell \leq 4Ck
    \end{equation*}
    and hence, $\tau \leq 8C$. If $\tau = 0$, we have $k_0 = k_\tau \geq \beta k$ or $m(H) = k - k_0 \leq (1-\beta)k = \frac{k}{32}$.
    By \cref{theorem:fox-sauermann:2020} with $m(H) \geq D_1$, we conclude $\ind(H) \leq \frac{\gamma}{2}$.
    Let us now assume that $\tau \neq 0$ and thus, $\tau \geq 1$.
    \cref{lemma:poisson-point-probability-decreasing} therefore implies $\frac{\tau^\tau}{\tau!e^\tau} \leq \frac{1}{e}$.
    By $\beta \geq \frac{1}{2}$ and $\tau \leq 8C$, we have $f \coloneqq \left\lfloor\sqrt{\frac{\beta}{\tau\epsilon}}\right\rfloor \geq \sqrt{\frac{1}{16C\epsilon}} - 1$ and thus,
    \begin{equation*}
        2 \left(\frac{1}{\beta} \cdot \frac{\tau^\tau}{\tau!e^\tau}\right)^f \leq 2 \left(\frac{2}{e}\right)^f \leq 2 \left(\frac{2}{e}\right)^{\sqrt{1 / (16C\epsilon)} - 1} \leq \frac{\gamma}{2},
    \end{equation*}
    where we used $\frac{2}{e} \leq 1$ and \cref{equation:epsilon-condition2}.
    Since $k_\tau \geq \beta k$ and $\Delta(H) \leq \epsilon k$, it follows from \cref{lemma:ind-if-almost-uniform-low-maximum-degree} with $1 \leq \tau \leq 8C \leq O(1)$ that
    \begin{equation*}
        \ind(H) \leq 2 \left(\frac{1}{\beta} \cdot \frac{\tau^\tau}{\tau!e^\tau}\right)^f + o_k(1) \leq \frac{\gamma}{2} + o_k(1).\qedhere
    \end{equation*}
\end{proof}

Define $D_1 = D_1(\gamma)$ as in the statement of \cref{claim:weak-characterization-of-high-inducibility-low-max-degree-graphs} and suppose that $D_2 = D_2(\gamma)$ is sufficiently large such that, by \cref{lemma:poisson-point-probability-decreasing}, $\frac{s^s}{s!e^s} \leq \frac{\gamma}{2}$ for all $s \geq D_2$.
Consider positive integers $k$ and $\ell$ with $\ell \leq Ck$ and a graph $H$ with $k$ vertices and $\ell$ edges such that $H$ is not $(2D_2 + D_1)$-tame. We now prove that
\begin{equation}\label{equation:weak-characterization-of-high-inducibility-graphs-goal}
    \ind(H) \leq \frac{\gamma}{2} + o_k(1).
\end{equation}

Since $H$ is not $(2D_2 + D_1)$-tame, we have $m(H) > 2D_2 + D_1 \geq D_1$. If $\Delta(H) \leq \epsilon k$, then \cref{equation:weak-characterization-of-high-inducibility-graphs-goal} follows by \cref{claim:weak-characterization-of-high-inducibility-low-max-degree-graphs} with $\ell \leq Ck \leq 2Ck$.
We now assume that $\Delta(H) \geq \epsilon k$. Let $\delta = \delta(C, \epsilon / 2) > 0$ be the constant in \cref{claim:conditions-for-lemma:ind-if-high-degree-vtcs}. Therefore, $\delta$ only depends on $C$ and $\gamma$ since $\epsilon$ only depended on $C$ and $\gamma$.
By \cref{claim:conditions-for-lemma:ind-if-high-degree-vtcs} with $\Delta(H) \geq \epsilon k \geq \frac{\epsilon}{2}k$ and $\ell \leq Ck$, we can find positive real numbers $a, b$ with $a < b \leq \frac{\epsilon}{2}$ and $b - a \geq \delta$ such that no vertex in $H$ has degree in the interval $(ak, bk)$ and $S \coloneqq \{V(H) \mid \degree{v} \geq bk\}$ has size $s \coloneqq |S|$ with $1 \leq s \leq \frac{2C}{\delta}$.
As in the statement of \cref{lemma:ind-if-high-degree-vtcs}, define $H' \coloneqq H[V(H) \backslash S]$, and $T \coloneqq \{v \in V(H) \backslash S \mid \neighbors[S]{v} \neq S\}$.
By \cref{fact:s-t-u-tame-h}, $S \cup T \cup V(M(H'))$ tames $H$.
Therefore, since $H$ is not $(2D_2+D_1)$-tame,
\begin{equation*}
    2D_2 + D_1 < |S \cup T \cup V(M(H'))| \leq |S| + |T| + m(H'),
\end{equation*}
and thus, at least one of $|S| > D_2$, $|T| > D_2$ or $m(H') > D_1$ holds. If $|S| > D_2 > 0$ or $|T| > D_2 > 0$, then by \cref{lemma:ind-if-high-degree-vtcs} and the definitions of $S$, $T$, and $D_2$, \cref{equation:weak-characterization-of-high-inducibility-graphs-goal} follows.
In the remaining case, we therefore have $m(H') > D_1$. Since $H'$ has $k - s \geq k - \frac{2C}{\delta}$ vertices, we conclude that for $k \geq \frac{4C}{\delta}$, we have $|V(H')| \geq \frac{k}{2}$. For $k \geq \frac{4C}{\delta}$ and any vertex $v \in V(H') = V(H) \backslash S$,
\begin{equation*}
    \degree[][H']{v} \leq \degree[][H]{v} \leq bk \leq \frac{\epsilon k}{2} \leq \epsilon |V(H')|,
\end{equation*}
which implies $\Delta(H') \leq \epsilon |V(H')|$, and
\begin{equation*}
    |E(H')| \leq \ell \leq Ck \leq 2C|V(H')|.
\end{equation*}
Since we have $\Delta(H') \leq \epsilon |V(H')|$, $|E(H')| \leq 2C|V(H')|$, and $m(H') > D_1$, \cref{claim:weak-characterization-of-high-inducibility-low-max-degree-graphs} implies that $\ind(H') \leq \frac{\gamma}{2} + o_k(1)$.
Hence, \cref{equation:weak-characterization-of-high-inducibility-graphs-goal} follows by \cref{lemma:ind-if-high-degree-vtcs} with $1 \leq s \leq \frac{2C}{\delta} \leq O(1)$.

Therefore, there exists a fixed $K = K(\gamma, C)$ such that for $k \geq K$, we have $\ind(H) < \gamma$. Since all graphs with $k \leq K$ vertices are $K$-tame, the desired conclusion follows with $D \coloneqq \max\{K, 2D_2 + D_1\}$.

\section{Proof of Lemma~\ref{lemma:ind-if-high-degree-vtcs}}\label{section:proof-of-lemma:ind-if-high-degree-vtcs}

Consider a graph $G$ on $n \geq k$ vertices, and let $W \in \binom{V(G)}{k}$ be chosen uniformly at random. We denote the vertices of low and high degrees in $G$ by
\begin{align*}
    V^+ & \coloneqq \left\{v \in V(G) \mmid \degree{v} \geq \frac{a+b}{2} \cdot n\right\}, \\
    V^- & \coloneqq \left\{v \in V(G) \mmid \degree{v} < \frac{a+b}{2} \cdot n\right\},
\end{align*}
respectively, and define
\begin{align*}
    W^+ & \coloneqq \{v \in W \mid \degree[W]{v} \geq bk\}, \\
    W^- & \coloneqq \{v \in W \mid \degree[W]{v} \leq ak\}.
\end{align*}
We will show that the event
\begin{equation*}
    \B \coloneqq \{W^- \cap V^+ \neq \emptyset\} \cup \{W^+ \cap V^- \neq \emptyset\}
\end{equation*}
holds with low probability.
If $\B^c$ holds, then $G[W] \cong H$ implies that $W^+ \dcup W^- = W$ and therefore, $W \cap V^+ = W^+$ has size exactly $s$. The probability for this last event can be bounded with \cref{lemma:poisson-bound-for-hypergeometric-distribution}. Conditioning on that event, we will upper-bound the probability of $\{G[W] \cong H\} \cap \B^c$. To prove the first part of the lemma, we bound the conditional probability that  $W \cap V^-$ induces a copy of $H'$ in $G$, and to prove the second part, we bound the probability that the induced bipartite graph on the sets $W \cap V^-$ and $W \cap V^+$ is a copy of the induced bipartite graph on the sets $S$ and $V(H) \backslash S$ in $H$.

\begin{claim}\label{claim:no-high-low-degree-swap-whp}
    We have $\IP[\B] \leq o_k(1)$.
\end{claim}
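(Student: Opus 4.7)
The plan is to apply a union bound over the vertices of $G$, combined with concentration of the hypergeometric distribution, to show that a vertex of high (resp.\ low) degree in $G$ almost surely has degree at least $bk$ (resp.\ at most $ak$) in the induced subgraph $G[W]$.

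First, I would decompose
\begin{equation*}
    \IP[\B] \leq \sum_{v \in V^+} \IP[v \in W,\ \degree[W]{v} \leq ak] + \sum_{v \in V^-} \IP[v \in W,\ \degree[W]{v} \geq bk]
\end{equation*}
and factor each summand as $\IP[v \in W]$ times the corresponding conditional probability given $v \in W$. The first factor equals $k/n$. Conditioned on $v \in W$, the remaining $k-1$ elements of $W$ form a uniformly random $(k-1)$-subset of $V(G) \setminus \{v\}$, so $\degree[W]{v}$ is hypergeometrically distributed with mean $(k-1) \degree{v}/(n-1)$. For $v \in V^+$, this mean is at least $\frac{a+b}{2}(k-1)$, which exceeds the threshold $ak$ by at least $\frac{\delta}{2}k - 1$ for $k$ sufficiently large, using $b - a \geq \delta$; a symmetric statement holds for $v \in V^-$ with respect to the threshold $bk$.

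Since the gap between the mean of $\degree[W]{v}$ and the threshold is linear in $k$, a standard Hoeffding-type concentration inequality for the hypergeometric distribution bounds each conditional probability by $\exp(-\Omega(\delta^2 k))$. Summing over $v \in V(G)$ and using $|V^+|, |V^-| \leq n$ gives
\begin{equation*}
    \IP[\B] \leq 2n \cdot \frac{k}{n} \cdot \exp(-\Omega(\delta^2 k)) = 2k \exp(-\Omega(\delta^2 k)) = o_k(1),
\end{equation*}
since $\delta$ is fixed. I do not expect any real obstacle here: the fixed gap $\delta$ provides ample linear-in-$k$ separation between the mean and the threshold, and the hypergeometric tail bound is uniform in the population size $n$, so the argument is valid for all $n \geq k$. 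The only minor care required is to check that the conditional mean of $\degree[W]{v}$ truly inherits the bound from $\degree{v}$ up to additive $O(1)$ losses coming from the $(k-1)/(n-1)$ factor; this is immediate from the definitions of $V^+$ and $V^-$.
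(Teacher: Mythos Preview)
Your proposal is correct and follows essentially the same approach as the paper: a union bound over $v \in V(G)$, factoring out $\IP[v \in W] = k/n$, recognizing the conditional law of $\degree[W]{v}$ as hypergeometric with mean within $O(1)$ of $\degree{v}\cdot k/n$, and applying a Hoeffding-type tail bound to exploit the linear-in-$k$ gap $\frac{\delta}{2}k - O(1)$ between the mean and the relevant threshold. The paper's write-up is virtually identical, yielding $\IP[\B_v \mid v \in W] \leq e^{-2(k-1)(\delta/2 - o_k(1))^2} = o_k(1/k)$ and then summing to $o_k(1)$.
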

\begin{proof}
    For each $v \in V$, we define the event
    \begin{equation*}
        \B_v \coloneqq
        \begin{cases}
            \left\{v \in W^-\right\} & \text{for $v \in V^+$,} \\
            \left\{v \in W^+\right\} & \text{for $v \in V^-$,}
        \end{cases}
    \end{equation*}
    so that $\B = \bigcup_{w \in W} \B_v$.
    Let us bound $\IP[\B_v \mid v \in W]$ for an arbitrary vertex $v \in V(G)$. By symmetry, $W \backslash \{v\}$ conditioned on $v \in W$ is uniformly random from $\binom{V(G) \backslash \{v\}}{k-1}$. Hence, the distribution of $\degree[W]{v}$ conditioned on $v \in W$ is hypergeometric with sample size $k-1$ in a population of size $n-1$ containing $\degree{v}$ successes. Thus,
    \begin{equation*}
        E_v \coloneqq \IE[\degree[W]{v} \mid v \in W] = \degree{v} \cdot \frac{k-1}{n-1} = \degree{v} \cdot \frac{k}{n} + O(1).
    \end{equation*}
    Since $b - \frac{a+b}{2} = \frac{a+b}{2} - a = \frac{b-a}{2} \geq \frac{\delta}{2}$, we obtain
    \begin{align*}
        ak & \leq \frac{a+b}{2} \cdot k - \frac{\delta}{2} \cdot k \leq E_v - \left(\frac{\delta}{2}- o_k(1)\right)(k-1) \quad \text{for $v \in V^+$,}  \\
        bk & \geq \frac{a+b}{2} \cdot k + \frac{\delta}{2} \cdot k \geq E_v + \left(\frac{\delta}{2}- o_k(1)\right)(k-1)  \quad \text{for $v \in V^-$,}
    \end{align*}
    where we used the definitions of $V^+$ and $V^-$. Together with the definitions of $\B_v, W^+, W^-$, we obtain
    \begin{equation*}
        \B_v \subseteq
        \begin{cases}
            \{\degree[W]{v} \leq E_v -\left(\frac{\delta}{2}- o_k(1)\right) (k-1)\} & \text{for $v \in V^+$,} \\
            \{\degree[W]{v} \geq E_v +\left(\frac{\delta}{2}- o_k(1)\right) (k-1)\} & \text{for $v \in V^-$.}
        \end{cases}
    \end{equation*}
    Therefore, by tail bounds of the hypergeometric distribution (see, for example, \cite[Theorem 2.10]{janson-luczak-rucinski:2000}),
    \begin{equation*}
        \IP[\B_v \mid v \in W] \leq e^{-2(k-1)(\delta / 2 - o_k(1))^2} \leq o_k\left(\frac{1}{k}\right).
    \end{equation*}
    Since $\B = \bigcup_{w \in W} \B_v$, we conclude that
    \begin{equation*}
        \IP[\B] \leq \sum_{v \in V(G)} \IP[v \in W] \cdot \IP[\B_v \mid v \in W] \leq \sum_{v \in V(G)} \frac{k}{n} \cdot o_k\left(\frac{1}{k}\right) \leq o_k(1).\qedhere
    \end{equation*}
\end{proof}
Let us define $\hat t \coloneqq \left|T\right|$, and $\overline{T} \coloneqq \{v \in W \cap V^- \mid \neighbors[W \cap V^+]{v} \neq W \cap V^+\}$, and events
\begin{align*}
    \A_1       & \coloneqq \{|W \cap V^+| = s\},                                                                                        \\
    \A_2       & \coloneqq \{G[W \cap V^-] \cong H'\},                                                                                  \\
    \A_3       & \coloneqq \{|\overline{T}| = \hat t\},                                                                                 \\
    \A_4(\tau) & \coloneqq \bigcup_{w \in W \cap V^+}\{\degree[W \cap V^-]{w} = \tau\} \quad \text{for any nonnegative integer $\tau$,} \\
    \A_4       & \coloneqq \bigcap_{v \in S} \A_4(\degree[V(H) \backslash S]{v}).
\end{align*}

\begin{claim}\label{claim:event-contained-in-intersection}
    We have $\{G[W] \cong H\} \cap \B^c \subseteq \A_1 \cap \A_2 \cap \A_3 \cap \A_4$.
\end{claim}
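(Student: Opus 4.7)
The plan is a direct set-containment check: on the event $\{G[W] \cong H\} \cap \B^c$ I would fix an isomorphism $\phi\colon V(H) \to W$ realizing $G[W] \cong H$, and then read off each of the four events $\A_1, \A_2, \A_3, \A_4$ from the fact that $\phi$ must send $S$ exactly to $W^+$.

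The key preliminary step is to establish the identities $W \cap V^+ = W^+$ and $W \cap V^- = W^-$. Since $G[W] \cong H$, the graph $G[W]$ has the same degree sequence as $H$, and since no vertex of $H$ has degree in $(ak, bk)$, neither does any vertex of $G[W]$. Every $w \in W$ therefore lies in $W^+$ or $W^-$, so $W = W^+ \dcup W^-$. Combining this with $\B^c$, which says $W^+ \cap V^- = \emptyset$ and $W^- \cap V^+ = \emptyset$, yields $W \cap V^+ = W^+$ and $W \cap V^- = W^-$. Because $\phi$ preserves degrees and $S$ (resp.\ $W^+$) is precisely the set of vertices of $H$ (resp.\ of $G[W]$) of degree at least $bk$, $\phi$ must further satisfy $\phi(S) = W^+$ and $\phi(V(H)\backslash S) = W^-$.

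From here the four events drop out mechanically. Event $\A_1$ is immediate since $|W \cap V^+| = |W^+| = |S| = s$. For $\A_2$, the restriction of $\phi$ to $V(H) \backslash S$ is an isomorphism $H' = H[V(H) \backslash S] \cong G[W^-] = G[W \cap V^-]$. For $\A_3$, the same $\phi$ restricts to an isomorphism between the bipartite subgraphs of $H$ on $(S, V(H) \backslash S)$ and of $G[W]$ on $(W \cap V^+, W \cap V^-)$, and this identifies $T$ bijectively with $\overline T$, giving $|\overline T| = |T| = \hat t$. For $\A_4$, each $v \in S$ maps to $\phi(v) \in W \cap V^+$ satisfying $\degree[W \cap V^-]{\phi(v)} = \degree[V(H) \backslash S]{v}$, so $\phi(v)$ witnesses $\A_4(\degree[V(H) \backslash S]{v})$; intersecting over $v \in S$ gives $\A_4$.

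There is essentially no obstacle in this argument; the claim is bookkeeping. The only step that requires a moment's care is the first one, where the hypothesis that $H$ has no vertex of degree in $(ak, bk)$ must be combined with $\B^c$ to lock down $W \cap V^\pm = W^\pm$, since once this is in place, all four events are forced by transporting the relevant structure from $H$ to $G[W]$ along $\phi$.
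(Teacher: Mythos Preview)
Your proposal is correct and follows essentially the same approach as the paper: fix an isomorphism, use the degree gap in $H$ together with $\B^c$ to identify $W\cap V^\pm$ with $W^\pm$, and then read off $\A_1,\A_2,\A_3,\A_4$ by transporting $S$, $V(H)\backslash S$, $T$, and the degrees $\degree[V(H)\backslash S]{v}$ along the isomorphism. The paper's proof differs only in minor presentational order (it first notes that $(G[W],W^+,W^-)$ and $(H,S,V(H)\backslash S)$ are isomorphic as labeled graphs, then deduces $W^\pm = W\cap V^\pm$), but the substance is identical.
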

\begin{proof}
    Assume that $G[W] \cong H$ and $\B^c$ hold. By $G[W] \cong H$, the labeled graphs $(G[W], W^+, W^-)$ and $(H, S, V(H) \backslash S)$ are isomorphic, i.e., there exists a graph isomorphism $\phi$ from $G[W]$ to $H$ such that the images of $W^+$ and $W^-$ under $\phi$ are $S$ and $V(H) \backslash S$, respectively. Hence, no vertex in $G[W]$ has degree in $(ak, bk)$, which yields $W^+ = W \cap V^+$ and $W^- = W \cap V^-$ by $\B^c$. We conclude that $|W \cap V^+| = |W^+| = |S| = s$ and $G[W \cap V^-] = G[W^-] \cong H[V(H) \backslash S] = H'$ and thus, $\A_1, \A_2$ hold. Moreover,
    \begin{equation*}
        \overline{T} = \{v \in W^- \mid \neighbors[W^+]{v} \neq W^+\} \cong \{v \in V(H) \backslash S \mid \neighbors[S]{v} \neq S\} = T.
    \end{equation*}
    Therefore, we have $|\overline{T}| = |T| = \hat t$ and $\A_3$ follows. For any $v \in S$, there exists a $w \in W^+ = W \cap V^+$ with $\degree[W \cap V^-]{w} = \degree[W^-]{w} = \degree[V(H) \backslash S]{v}$. Thus, $\A_4(\degree[V(H) \backslash S]{v})$ follows.
\end{proof}
By \cref{claim:no-high-low-degree-swap-whp,claim:event-contained-in-intersection}, we obtain
\begin{equation}\label{equation:upper-bound-induced-density-with-high-degree-vtcs}
    \indd{H}{G} \leq \IP[\A_1 \cap \A_2 \cap \A_3 \cap \A_4] + \IP[B] \leq \IP[\A_1 \cap \A_2 \cap \A_3 \cap \A_4] + o_k(1).
\end{equation}
We will use that $s \leq O(1)$, $t \leq O(1)$, as $s$ and $t$ are fixed. Define $\sigma = \frac{|V^+|}{n} \in [0, 1]$, so that $|W \cap V^+|$ has a hypergeometric distribution with sample size $k$ in a population of size $n$ containing $\sigma n$ successes.
\begin{claim}\label{claim:induced-density-for-sigma-large}
    For $\sigma \geq \frac{1}{2}$, we have $\indd{H}{G} \leq o_k(1) + o_{n \mid k}(1)$.
\end{claim}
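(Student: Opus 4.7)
The plan is to deduce the claim almost immediately from \cref{equation:upper-bound-induced-density-with-high-degree-vtcs} by showing that when $\sigma \geq \tfrac{1}{2}$, the event $\A_1$ alone is exponentially unlikely, so there is no need to even invoke $\A_2$, $\A_3$, or $\A_4$. The key observation is that $s$ is a fixed constant, whereas on the event $\A_1$ we are asking the hypergeometric random variable $|W \cap V^+|$ to equal $s$, even though its mean $\sigma k$ is linear in $k$.

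Concretely, I would first note the trivial inclusion $\A_1 \cap \A_2 \cap \A_3 \cap \A_4 \subseteq \A_1$, so that \cref{equation:upper-bound-induced-density-with-high-degree-vtcs} yields
\begin{equation*}
    \indd{H}{G} \leq \IP[\A_1] + o_k(1).
\end{equation*}
It therefore suffices to bound $\IP[\A_1]$. Let $X \coloneqq |W \cap V^+|$. By the setup in the preamble, $X$ is hypergeometric with sample size $k$ in a population of size $n$ containing $\sigma n$ successes, and so $\IE[X] = \sigma k \geq k/2$. Since $s$ is fixed while $k \to \infty$, for all sufficiently large $k$ we have $s \leq k/4$, and hence
\begin{equation*}
    \A_1 = \{X = s\} \subseteq \{X \leq k/4\} \subseteq \{X \leq \IE[X] - k/4\}.
\end{equation*}
Applying a standard tail bound for the hypergeometric distribution (for instance \cite[Theorem 2.10]{janson-luczak-rucinski:2000}, as was already used in the proof of \cref{claim:no-high-low-degree-swap-whp}), the probability of the rightmost event is at most $\exp(-\Omega(k))$, which is $o_k(1)$. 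Combining with the display above gives $\indd{H}{G} \leq o_k(1)$, which is stronger than the claimed $o_k(1)+o_{n\mid k}(1)$.

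There is no real obstacle here: the lemma is a short concentration argument whose only role is to eliminate the regime $\sigma \geq \tfrac{1}{2}$ so that in the remainder of the proof of \cref{lemma:ind-if-high-degree-vtcs} one may freely assume $\sigma < \tfrac{1}{2}$, i.e.\ that the high-degree vertices in $G$ form a small minority, which is the regime in which one actually wants to do the more delicate computation of $\IP[\A_1 \cap \A_2 \cap \A_3 \cap \A_4]$.
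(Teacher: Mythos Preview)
Your proof is correct and follows essentially the same route as the paper: both invoke \cref{equation:upper-bound-induced-density-with-high-degree-vtcs} to reduce to bounding $\IP[\A_1]$, and both observe that a hypergeometric variable with mean at least $k/2$ is very unlikely to equal the fixed value $s$. The only minor difference is that the paper uses \cref{lemma:poisson-bound-for-hypergeometric-distribution} to approximate $\IP[\A_1]$ by $\binom{k}{s}\sigma^s(1-\sigma)^{k-s}+o_{n\mid k}(1)$ and then bounds this by $k^s(1/2)^{k-s}$, whereas you apply a Hoeffding-type tail bound directly; your version thus avoids the $o_{n\mid k}(1)$ term altogether.
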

\begin{proof}
    By \cref{lemma:poisson-bound-for-hypergeometric-distribution}, we have
    \begin{equation*}
        \IP[\A_1] = \binom{k}{s} \sigma^{s} (1-\sigma)^{k-s} + o_{n \mid k}(1) \leq k^s \left(\frac{1}{2}\right)^{k-s}  + o_{n \mid k}(1) \leq o_k(1) + o_{n \mid k}(1),
    \end{equation*}
    where we used $s \leq O(1)$ and $\sigma \geq \frac{1}{2}$.
    By \cref{equation:upper-bound-induced-density-with-high-degree-vtcs}, $\indd{H}{G} \leq \IP[\A_1] + o_k(1) \leq o_k(1) + o_{n \mid k}(1)$.
\end{proof}
It follows from \cref{lemma:poisson-bound-for-hypergeometric-distribution} with $1 \leq s \leq O(1)$ that
\begin{equation}\label{equation:upper-bound-point-probability-for-high-degree-vtcs}
    \IP[\A_1] \leq \frac{s^{s}}{s! e^{s}} + o_k(1) + o_{n \mid k}(1).
\end{equation}

\begin{claim}\label{claim:bound-of-conditional-a2-probability}
    For $\sigma \leq \frac{1}{2}$, we have $\IP[\A_2 \mid \A_1] \leq \ind(H') + o_{n \mid k}(1)$.
\end{claim}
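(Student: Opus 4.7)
The plan is to show that conditional on $\A_1$, the set $W \cap V^-$ is a uniformly random $(k-s)$-element subset of $V^-$, so that $\IP[\A_2 \mid \A_1]$ is exactly the induced density of $H'$ in the subgraph $G[V^-]$. The claim will then follow by comparing to $\indd{H'}{|V^-|}$ and using that $|V^-|$ tends to infinity, thanks to $\sigma \leq \frac{1}{2}$.

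First, I would further condition on the specific subset $U = W \cap V^+ \in \binom{V^+}{s}$. Since $W \in \binom{V(G)}{k}$ is uniform, each pair $(U, U')$ with $U \in \binom{V^+}{s}$ and $U' \in \binom{V^-}{k-s}$ yields an equally likely outcome $W = U \cup U'$. Hence, conditional on $W \cap V^+ = U$, the set $W \cap V^-$ is uniformly distributed over $\binom{V^-}{k-s}$, and the conditional probability of $\A_2$ equals the probability that a uniformly random $(k-s)$-subset $W'$ of $V^-$ satisfies $G[W'] \cong H'$. Since this quantity does not depend on $U$, averaging gives
\begin{equation*}
    \IP[\A_2 \mid \A_1] = \indd{H'}{G[V^-]}.
\end{equation*}

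Next, by the definition of $\indd{H'}{|V^-|}$ as the maximum of $\indd{H'}{G'}$ over all $|V^-|$-vertex graphs $G'$, we have $\indd{H'}{G[V^-]} \leq \indd{H'}{|V^-|}$. Since $\sigma \leq \frac{1}{2}$, we have $|V^-| \geq n/2$, which tends to infinity as $n \to \infty$ with $k$ (and hence $s$ and $H'$) fixed. The monotonic convergence $\indd{H'}{m} \to \ind(H')$ as $m \to \infty$ therefore yields $\indd{H'}{|V^-|} = \ind(H') + o_{n \mid k}(1)$, finishing the proof.

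There is no substantive obstacle here; the only delicate point is that $|V^-|$ must grow with $n$ in order to invoke the defining convergence of $\ind(H')$, and this is precisely what the hypothesis $\sigma \leq \frac{1}{2}$ guarantees. This is also why the complementary regime $\sigma \geq \frac{1}{2}$ is treated separately in \cref{claim:induced-density-for-sigma-large}, where $|V^-|$ may be too small to give a nontrivial bound through $\indd{H'}{|V^-|}$.
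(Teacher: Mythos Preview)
Your proof is correct and follows essentially the same approach as the paper: both arguments observe that, conditioned on $\A_1$, the set $W\cap V^-$ is uniform on $\binom{V^-}{k-s}$, so $\IP[\A_2\mid\A_1]=\indd{H'}{G[V^-]}$, and then use $|V^-|\geq n/2$ (from $\sigma\leq\tfrac12$) to pass to $\ind(H')+o_{n\mid k}(1)$. Your extra step of first conditioning on the specific $U=W\cap V^+$ and then averaging is a slightly more explicit justification of the uniformity, but the argument is otherwise the same.
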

\begin{proof}
    Recall that $\A_1 = \{|W \cap V^+| = s\}$ only depends on $W \cap V^+$. Therefore, conditioned on $\A_1$, the distribution of $W \cap V^-$ is uniform on $\binom{V^-}{k-s}$. Hence,
    \begin{equation*}
        \IP[\A_2 \mid \A_1] = \indd{H'}{G[V^-]} \leq \ind(H') + o_{n \mid k}(1),
    \end{equation*}
    where we used that $|V(H')| = k - s = k - O(1)$, and $|V(G[V^-])| = |V^-| = (1-\sigma)n \geq \frac{n}{2}$.
\end{proof}
\cref{equation:upper-bound-point-probability-for-high-degree-vtcs,claim:bound-of-conditional-a2-probability} show that for $\sigma \leq \frac{1}{2}$, we have
\begin{equation*}
    \IP[\A_1 \cap \A_2] = \IP[\A_1] \cdot \IP[\A_2 \mid \A_1] \leq \frac{s^s}{s!e^s} \cdot \ind(H') + o_k(1) + o_{n \mid k}(1)
\end{equation*}
and thus, by \cref{equation:upper-bound-induced-density-with-high-degree-vtcs},
\begin{equation*}
    \indd{H}{G} \leq \IP[\A_1 \cap \A_2] + o_k(1) \leq \frac{s^s}{s!e^s} \cdot \ind(H') + o_k(1) + o_{n \mid k}(1).
\end{equation*}
Since \cref{claim:induced-density-for-sigma-large} implies the same bound for $\sigma \geq \frac{1}{2}$, the first part of the lemma now follows by taking $n \to \infty$, as $G$ was an arbitrary $n$-vertex graph.

We now prove the second part of the lemma. Hence, we assume that $\hat t = |T| \geq t$. We will upper-bound the probability that the induced bipartite graph with parts $W \cap V^-$ and $W \cap V^+$ is a copy of the induced bipartite graph of $H$ with parts $S$ and $V(H) \backslash S$. To that end, we will consider the distribution of $W \cap V^-$ conditioned on the event $\{W \cap V^+ = W_0\}$ for an arbitrary $W_0 \in \binom{V^+}{s}$. By symmetry and \cref{equation:upper-bound-point-probability-for-high-degree-vtcs}, we have
\begin{equation}\label{equation:upper-bound-point-probability-for-high-degree-vtx-set}
    \IP[W \cap V^+ = W_0] = \binom{\sigma n}{s}^{-1} \IP[\A_1] \leq \binom{\sigma n}{s}^{-1}\left(\frac{s^s}{s!e^s} + o_k(1) + o_{n \mid k}(1)\right).
\end{equation}
\begin{claim}\label{claim:bound-of-conditional-a3-a4-probability}
    For $\sigma \leq \frac{1}{2}$, we have $\IP[\A_3 \cap \A_4 \mid W \cap V^+ = W_0] \leq \frac{t^{t}}{t!e^t} + o_k(1) + o_{n \mid k}(1)$.
\end{claim}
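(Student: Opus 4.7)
The plan is to drop the event $\A_4$ and use the inclusion $\A_3 \cap \A_4 \subseteq \A_3$, bounding only $\IP[\A_3 \mid W \cap V^+ = W_0]$. Conditional on $W \cap V^+ = W_0$, the set $W \cap V^-$ is uniformly distributed on $\binom{V^-}{k-s}$. I would introduce the set of vertices in $V^-$ that are not fully adjacent to $W_0$,
\begin{equation*}
    U_* \coloneqq \{v \in V^- \mid \neighbors[W_0]{v} \neq W_0\},
\end{equation*}
and note that, in this conditional setting, $\overline{T} = (W \cap V^-) \cap U_*$. Consequently, the random variable $|\overline{T}|$ follows a hypergeometric distribution with sample size $k - s$ in a population of size $|V^-|$ containing $|U_*|$ successes.

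Next, the assumption $\sigma \leq \frac{1}{2}$ gives $|V^-| = (1-\sigma)n \geq n/2$, so the population size tends to infinity with $n$. Applying \cref{lemma:poisson-bound-for-hypergeometric-distribution} to this hypergeometric distribution, analogously to its earlier use in the upper bound on $\IP[\A_1]$, would yield
\begin{equation*}
    \IP[|\overline{T}| = \hat t \mid W \cap V^+ = W_0] \leq \frac{\hat t^{\hat t}}{\hat t!\, e^{\hat t}} + o_k(1) + o_{n \mid k}(1).
\end{equation*}
Since $\hat t = |T| \geq t \geq 1$, the monotonicity of $\tau \mapsto \tau^{\tau}/(\tau! e^\tau)$ on $\tau \geq 1$ provided by \cref{lemma:poisson-point-probability-decreasing} gives $\hat t^{\hat t}/(\hat t!\, e^{\hat t}) \leq t^t/(t! e^t)$. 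Combining with $\A_3 \cap \A_4 \subseteq \A_3$ would then complete the proof.

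The main obstacle I anticipate is that \cref{lemma:poisson-bound-for-hypergeometric-distribution} is previously applied with a bounded parameter, whereas here $\hat t = |T|$ may grow with $k$. When $\hat t$ is bounded by an absolute constant, the lemma applies directly. For unbounded $\hat t$, Stirling gives $\hat t^{\hat t}/(\hat t!\, e^{\hat t}) = o_k(1)$, and standard concentration inequalities for hypergeometric distributions should show $\IP[|\overline{T}| = \hat t \mid W \cap V^+ = W_0] = o_k(1)$ in this regime (since either the mean is bounded and one is in the Poisson regime, or the mean diverges and the point probability is $O(1/\sqrt{\mathrm{Var}})$), which again yields the desired bound.
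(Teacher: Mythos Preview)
Your approach of dropping $\A_4$ and bounding only $\IP[\A_3 \mid W \cap V^+ = W_0]$ has a genuine gap when $\hat t$ is close to $k-s$. The dichotomy you invoke, ``either the mean is bounded \dots\ or the mean diverges and the point probability is $O(1/\sqrt{\mathrm{Var}})$,'' is false for the hypergeometric distribution: when $|U_*|/|V^-|$ is close to $1$, the mean $(k-s)\cdot |U_*|/|V^-|$ diverges while the variance $(k-s)\cdot\frac{|U_*|}{|V^-|}\cdot\bigl(1-\frac{|U_*|}{|V^-|}\bigr)\cdot(\text{correction})$ stays bounded or tends to $0$. Concretely, if $U_* = V^-$ then $|\overline T| = k-s$ deterministically, so whenever $\hat t = k-s$ one gets $\IP[\A_3 \mid W \cap V^+ = W_0] = 1$. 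This situation does occur: for $s=2$, let $H$ consist of two centres $u_1,u_2$ each joined to a disjoint half of the remaining $k-2$ vertices; then every vertex of $V(H)\backslash S$ misses one of $u_1,u_2$, so $\hat t = k-2$. Taking $G$ a balanced blow-up of $H$ and $W_0=\{w_1,w_2\}$ with $w_i$ in the part corresponding to $u_i$, every $v\in V^-$ misses one of $w_1,w_2$, so $U_*=V^-$ and your bound is vacuous.

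This is exactly why the paper retains $\A_4$. Its proof fixes a large constant $P=P(s,t)$ with $s\cdot\frac{P^P}{P!e^P}\le \frac{t^t}{t!e^t}$ and splits into two cases. If $\hat t\le sP$ then $\hat t$ is bounded and your argument via \cref{lemma:poisson-bound-for-hypergeometric-distribution} and \cref{lemma:poisson-point-probability-decreasing} goes through verbatim, using $\A_3\cap\A_4\subseteq\A_3$. If $\hat t> sP$, the paper instead uses $\A_3\cap\A_4\subseteq\A_4$: since $T=\bigcup_{v\in S}\bigl((V(H)\backslash S)\backslash \neighbors{v}\bigr)$ has size $\ge sP$, some $v\in S$ has $\tau\coloneqq \degree[V(H)\backslash S]{v}$ with $k-s-\tau\ge P$, and membership in $S$ also forces $\tau\ge bk-s\ge P$. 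Thus $\min\{\tau,k-s-\tau\}\ge P$, so \cref{lemma:poisson-bound-for-hypergeometric-distribution} bounds each $\IP[\degree[W\cap V^-]{w_0}=\tau\mid W\cap V^+=W_0]$ by $\frac{P^P}{P!e^P}+o_{n\mid k}(1)$; summing over the $s$ vertices $w_0\in W_0$ yields the claim.
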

\begin{proof}
    By \cref{lemma:poisson-point-probability-decreasing}, there exists a fixed positive integer $P = P(s, t)$ such that
    \begin{equation}\label{equation:defining-inequality-for-P}
        \frac{P^P}{P!e^P} \leq \frac{1}{s} \cdot \frac{t^t}{t! e^t}.
    \end{equation}
    Consider the case $\hat t \leq sP$. Conditioned on $W \cap V^+ = W_0$, the distribution of $W \cap V^-$ is uniform on $\binom{V^-}{k-s}$ and thus, $|\overline{T}|$ has a hypergeometric distribution with sample size $k - s$ in a population of size $|V^-|$ containing $|\{v \in V^- \mid \neighbors[W_0]{v} \neq W_0\}|$ successes.
    By \cref{lemma:poisson-bound-for-hypergeometric-distribution,lemma:poisson-point-probability-decreasing} with $1 \leq t \leq \hat t \leq sP \leq O(1)$,
    \begin{equation*}
        \IP[\A_3 \mid W \cap V^+ = W_0] \leq \frac{\hat t^{\hat t}}{\hat t! e^{\hat t}} + o_k(1) + o_{n \mid k}(1) \leq \frac{t^{t}}{t!e^t} + o_k(1) + o_{n \mid k}(1),
    \end{equation*}
    where we used that $k - s = k - O(1)$ and $|V^-| = (1-\sigma)n \geq \frac{n}{2}$.
    By $\A_3 \cap \A_4 \subseteq \A_3$, the claim follows.

    Let us consider the remaining case $\hat t \geq sP$.
    Since $T = \bigcup_{v \in S} ((V(H) \backslash S) \backslash \neighbors{v})$, we can find a $v \in S$ such that $\tau \coloneqq \degree[V(H) \backslash S]{v}$ satisfies
    \begin{equation*}
        k-s-\tau = |(V(H) \backslash S) \backslash \neighbors{v}| \geq \frac{|T|}{|S|} = \frac{\hat t}{s} \geq P.
    \end{equation*}
    We may assume that $k \geq \frac{P + s}{\delta}$. Therefore, since $v \in S$, we also have $\tau \geq \degree{v} - s \geq bk - s \geq \delta k - s \geq P$. Now, fix a vertex $w_0 \in W_0$.
    Conditioned on $W \cap V^+ = W_0$, the distribution of $W \cap V^-$ is uniform on $\binom{V^-}{k-s}$ and therefore, the conditional distribution of $\degree[W \cap V^-]{w_0}$ is hypergeometric with sample size $k - s$ in a population of size $|V^-|$ containing $\degree[V^-]{w_0}$ successes. Since $P$ is fixed, \cref{lemma:poisson-bound-for-hypergeometric-distribution} with $\min\{\tau, k-s-\tau\} \geq P$ implies
    \begin{equation*}
        \IP[\degree[W \cap V^-]{w_0} = \tau \mid W \cap V^+ = W_0] \leq \frac{P^P}{P!e^P} + o_{n \mid k}(1),
    \end{equation*}
    where we used $k - s = k - O(1)$ and $|V^-| \geq \Omega_n(n)$.
    Summing this inequality over all $w_0 \in W_0$ yields
    \begin{align*}
        \IP[\A_4 \mid W \cap V^+ = W_0]
         & \leq \IP[\A_4(\tau) \mid W \cap V^+ = W_0]                                         \\
         & \leq \sum_{w_0 \in W_0} \IP[\degree[W \cap V^-]{w_0} = \tau \mid W \cap V^+ = W_0] \\
         & \leq |W_0|\left(\frac{P^P}{P!e^P} + o_{n \mid k}(1)\right)                         \\
         & \leq \frac{t^t}{t!e^t} + o_{n \mid k}(1),
    \end{align*}
    where we used $|W_0| = s$ and \cref{equation:defining-inequality-for-P}. The desired claim follows by $\A_3 \cap \A_4 \subseteq \A_4$.
\end{proof}
For $\sigma \leq \frac{1}{2}$, \cref{claim:bound-of-conditional-a3-a4-probability,equation:upper-bound-point-probability-for-high-degree-vtx-set} imply
\begin{align*}
    \IP[\A_1 \cap \A_3 \cap \A_4]
     & = \sum_{W_0 \in \binom{V^+}{s}} \IP[\A_3 \cap \A_4 \mid W \cap V^+ = W_0] \cdot \IP[W \cap V^+ = W_0]                                                                    \\
     & \leq \binom{\sigma n}{s} \left(\frac{t^{t}}{t!e^t} + o_k(1) + o_{n \mid k}(1)\right) \binom{\sigma n}{s}^{-1} \left(\frac{s^s}{s!e^s}  + o_k(1) + o_{n \mid k}(1)\right) \\
     & \leq \frac{s^s}{s!e^s} \cdot \frac{t^t}{t!e^t} + o_k(1) + o_{n \mid k}(1)
\end{align*}
and thus, by \cref{equation:upper-bound-induced-density-with-high-degree-vtcs},
\begin{equation*}
    \indd{H}{G} \leq \IP[\A_1 \cap \A_3 \cap \A_4] + o_k(1) \leq \frac{s^s}{s!e^s} \cdot \frac{t^t}{t!e^t} + o_k(1) + o_{n \mid k}(1).
\end{equation*}
Since \cref{claim:induced-density-for-sigma-large} implies the same bound for $\sigma \geq \frac{1}{2}$, the second part of the lemma now follows by taking $n$ to infinity, as $G$ was an arbitrary $n$-vertex graph.

\section{Proof of Lemma~\ref{lemma:ind-if-almost-uniform-low-maximum-degree}}\label{section:proof-of-lemma:ind-if-almost-uniform-low-maximum-degree}

By definition, $f$ is a nonnegative integer and the claim is trivial for $f = 0$. From now on, we assume that $f$ is a positive integer. Let $n$ be a positive integer with $n \geq k$ and let $G$ be a graph on $n$ vertices. We consider a uniformly random set $W \in \binom{V(G)}{k}$ so that $\IP[G[W] \cong H] = \indd{H}{G}$.
Let
\begin{equation*}
    U \coloneqq \{v \in W \mid \degree[W]{v} = \tau\}.
\end{equation*}
For a vertex $v \in V(G)$, we can upper-bound the probability
\begin{equation*}
    \IP[v \in U \mid v \in W] = \IP[|(W \backslash \{v\}) \cap N(v)| = \tau \mid v \in W]
\end{equation*}
by \cref{lemma:poisson-bound-for-hypergeometric-distribution}. This yields an upper bound on $\IE[|U|]$.
We will improve this upper bound by considering, for every $A \in \binom{V(G)}{f}$, the events $(\{v \in U\})_{v \in A}$. If those events, conditioned on $A \subseteq W$, were independent, then, for large $f$, the probability of $\{A \subseteq U\} = \bigcap_{v \in A} \{v \in U\}$ would be very small, which would imply a much better upper bound on $\IE[|U|]$. Evidently, this is not the case. However, the events $(\{v \in U\})_{v \in A}$ are (informally) close to independent conditioned on the event that $W$ contains all vertices in $A$ but no vertex having at least two neighbors in $A$. This motivates upper-bounding the expected size of
\begin{equation*}
    F \coloneqq \left\{A \in \binom{U}{f} \mmid \text{for any}~\{v, w\} \in \binom{A}{2}: \{v, w\} \not \in E(G), \neighbors[W]{v} \cap \neighbors[W]{w} = \emptyset \right\}.
\end{equation*}
In the second part of the proof, we lower-bound $|F|$ on the event $\{G[W] \cong H\}$.
In this step, we crucially use $\Delta(H) \leq \epsilon k$.
Combining the bounds on $\IE[|F|]$ yields the required upper bound on $\indd{H}{G}$.
\begin{claim}\label{claim:bound-for-p}
    For $A \in \binom{V(G)}{f}$, we have
    \begin{equation*}
        \IP[A \in F \mid A \subseteq W] \leq \left(\frac{\tau^\tau}{\tau!e^\tau}\right)^f + o_k(1) + o_{n \mid k}(1).
    \end{equation*}
\end{claim}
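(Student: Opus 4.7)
The plan is to reduce the event $\{A \in F\}$ to a condition on pairwise disjoint subsets of $V(G) \setminus A$, and then to apply \cref{lemma:poisson-bound-for-hypergeometric-distribution} iteratively $f$ times.

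First, if $A$ is not an independent set in $G$, then $A \in F$ is impossible by the definition of $F$, and the claim holds trivially. Assume henceforth that $A = \{v_1, \ldots, v_f\}$ is independent in $G$, and condition on $A \subseteq W$, under which $W' \coloneqq W \setminus A$ is uniformly distributed on $\binom{V(G) \setminus A}{k - f}$. Writing $N_i \coloneqq \neighbors[][G]{v_i}$, the independence of $A$ gives $\neighbors[W]{v_i} = W' \cap N_i$, so the event $\{A \in F\}$ becomes: $|W' \cap N_i| = \tau$ for every $i$, and $W' \cap (N_i \cap N_j) = \emptyset$ for every $i \neq j$.

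Define the exclusive neighborhoods $N_i^\ast \coloneqq N_i \setminus \bigcup_{j \neq i} N_j$, which are pairwise disjoint by construction. The disjointness condition $W' \cap (N_i \cap N_j) = \emptyset$ forces $W' \cap N_i \subseteq N_i^\ast$, so combined with $|W' \cap N_i| = \tau$ it yields $|W' \cap N_i^\ast| = \tau$ for every $i$, and hence
\[
    \IP[A \in F \mid A \subseteq W] \leq \IP\left[\bigcap_{i=1}^f \{|W' \cap N_i^\ast| = \tau\} \,\middle|\, A \subseteq W\right].
\]
Expanding the right-hand side by the chain rule gives a product of $f$ conditional probabilities. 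Conditioned on $A \subseteq W$ and on any realization of $W' \cap N_j^\ast$ for $j < i$, the remainder $W' \setminus \bigcup_{j < i} N_j^\ast$ is uniformly distributed among $(k - f - (i-1)\tau)$-subsets of $V(G) \setminus \left(A \cup \bigcup_{j < i} N_j^\ast\right)$, and its intersection with $N_i^\ast$ (which is disjoint from $\bigcup_{j < i} N_j^\ast$) is hypergeometric. Since $f$ and $\tau$ are fixed, the sample size is $k - O(1)$, so by \cref{lemma:poisson-bound-for-hypergeometric-distribution}, together with the fact that the Poisson point probability $\lambda^\tau e^{-\lambda}/\tau!$ is maximized over $\lambda \geq 0$ at $\lambda = \tau$ with value $\tau^\tau/(\tau! e^\tau)$, each conditional factor is bounded by $\frac{\tau^\tau}{\tau! e^\tau} + o_k(1) + o_{n \mid k}(1)$, uniformly in $|N_i^\ast|$.

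Multiplying the $f = O(1)$ bounds and expanding then yields
\[
    \IP[A \in F \mid A \subseteq W] \leq \left(\frac{\tau^\tau}{\tau! e^\tau} + o_k(1) + o_{n \mid k}(1)\right)^f = \left(\frac{\tau^\tau}{\tau! e^\tau}\right)^f + o_k(1) + o_{n \mid k}(1),
\]
which is the desired bound. The main technical point will be in the previous paragraph: verifying that \cref{lemma:poisson-bound-for-hypergeometric-distribution} indeed provides a point-probability bound at $\tau$ that is uniform across all possible values of $|N_i^\ast|$ and across all $i$. The extreme regimes (where $|N_i^\ast| < \tau$ or so large that the expected intersection size is far from $\tau$) should pose no difficulty since the hypergeometric point probability is already tiny or zero there, and I expect the intermediate regime to be exactly where the Poisson approximation is sharp at $\tau^\tau/(\tau! e^\tau)$.
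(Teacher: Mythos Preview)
Your approach is correct and essentially the same as the paper's. Your exclusive neighborhoods $N_i^\ast$ coincide exactly with the paper's sets $N_{N_1}(v_i)$ (where $N_1 = \{v \in V(G) : d_A(v) = 1\}$), and your chain-rule iteration of \cref{lemma:poisson-bound-for-hypergeometric-distribution} is precisely the content of \cref{lemma:poisson-bound-for-multi-hypergeometric-distribution}, which the paper cites directly rather than unrolling inline. One small simplification: you do not need the separate Poisson-maximization remark, since the second bound in \cref{lemma:poisson-bound-for-hypergeometric-distribution} with $\hat s = \tau$ already delivers $\frac{\tau^\tau}{\tau! e^\tau} + o_k(1) + o_{n\mid k}(1)$ uniformly in the number of successes.
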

\begin{proof}
    If $\{v, w\} \in E(G[A])$, we have $A \not\in F$ for any choice of $W$ and thus, $\IP[A \in F \mid A \subseteq W] = 0$. Let us now assume that $E(G[A]) = \emptyset$. Therefore, $N_1 \coloneqq \{v \in V(G) \mid \degree[A]{v} = 1\}$
    satisfies $N_1 \cap A = \emptyset$. Hence, by definition of $F$ and $N_1$,
    \begin{equation*}
        \{A \in F\} \subseteq \bigcap_{v \in A} \{\degree[N_1 \cap W]{v} = \tau\} = \bigcap_{v \in A} \{|\neighbors[N_1]{v} \cap (W \backslash A)| = \tau\},
    \end{equation*}
    which implies
    \begin{equation*}
        \IP[A \in F \mid A \subseteq W] \leq \IP\left[\bigcap_{v \in A} \{|\neighbors[N_1]{v} \cap (W \backslash A)| = \tau\} \mmid A \subseteq W\right].
    \end{equation*}
    By symmetry, the distribution of $W \backslash A$ conditioned on $A \subseteq W$ is uniform on $\binom{V(G) \backslash A}{k - f}$.
    Since $(\neighbors[N_1]{v})_{v \in A}$ are disjoint subsets of $V(G) \backslash A$, \cref{lemma:poisson-bound-for-multi-hypergeometric-distribution} yields
    \begin{equation*}
        \IP\left[\bigcap_{v \in A} \{|\neighbors[N_1]{v} \cap (W \backslash A)| = \tau\} \mmid A \subseteq W\right] \leq \left(\frac{\tau^\tau}{\tau!e^\tau}\right)^f + o_k(1) + o_{n \mid k}(1),
    \end{equation*}
    where we used that $\tau$ and $f$ are fixed positive integers and thus, $k - f = k - O(1)$ and $n-f = n - O(1)$.
    The desired conclusion follows from the previous two equations.
\end{proof}
Since $\{A \in F\} \subseteq \{A \subseteq W\}$ and $\IP[A \subseteq W] = \binom{k}{f} \binom{n}{f}^{-1}$, it follows from \cref{claim:bound-for-p} that
\begin{equation}\label{equation:ex-of-f-upper}
    \IE[|F|] = \sum_{A \in \binom{V(G)}{f}} \IP[A \in F \mid A \subseteq W] \cdot \IP[A \subseteq W] \leq \left(\left(\frac{\tau^\tau}{\tau!e^\tau}\right)^f + o_k(1) + o_{n \mid k}(1)\right) \binom{k}{f}.
\end{equation}
\begin{claim}\label{claim:f-large}
    If $G[W] \cong H$, then $|F| \geq \left(\frac{1}{2} - o_k(1)\right) \beta^f \binom{k}{f}$.
\end{claim}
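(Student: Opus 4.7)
The plan is to observe that on the event $G[W] \cong H$, the degree sequences of $G[W]$ and $H$ coincide, so at least $\beta k$ vertices of $W$ have degree exactly $\tau$ in $G[W]$; in particular $|U| \geq \beta k$. The task then reduces to lower-bounding the number of $A \in \binom{U}{f}$ that avoid all \emph{bad} pairs, where a pair $\{v, w\} \in \binom{U}{2}$ is bad if $\{v, w\} \in E(G)$ or $\neighbors[W]{v} \cap \neighbors[W]{w} \neq \emptyset$. I would handle this via a union bound over pairs.

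To count bad pairs, I would fix $v \in U$ and bound the bad partners $w \in U \backslash \{v\}$: adjacent ones number at most $\degree[W]{v} = \tau$, while those sharing a common neighbor $u$ with $v$ lie in $\bigcup_{u \in \neighbors[W]{v}} (\neighbors[W]{u} \backslash \{v\})$, contributing at most $\tau(\Delta(G[W]) - 1)$. Since $G[W] \cong H$ gives $\Delta(G[W]) = \Delta(H) \leq \epsilon k$, each $v \in U$ has at most $\tau \epsilon k$ bad partners, so the total number of bad pairs in $\binom{U}{2}$ is at most $\frac{1}{2} |U| \tau \epsilon k$.

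Sampling $A$ uniformly from $\binom{U}{f}$, a given pair lies in $A$ with probability $f(f-1)/(|U|(|U|-1))$, so the union bound yields
\begin{equation*}
\IP[A \notin F] \leq \frac{|U| \tau \epsilon k}{2} \cdot \frac{f(f-1)}{|U|(|U|-1)} \leq \frac{\tau \epsilon k f^2}{2(\beta k - 1)} = \frac{\tau \epsilon f^2}{2\beta} + o_k(1) \leq \frac{1}{2} + o_k(1),
\end{equation*}
where the last inequality uses $\tau \epsilon f^2 \leq \beta$, which is baked into the choice $f = \lfloor \sqrt{\beta/(\tau \epsilon)} \rfloor$. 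Hence $|F| \geq (\frac{1}{2} - o_k(1)) \binom{|U|}{f}$, and since $f$ is fixed and $|U| \geq \beta k$, the asymptotic $\binom{\beta k}{f} = (1 - o_k(1)) \beta^f \binom{k}{f}$ delivers the claim. There is no real obstacle here: the exponent $f$ is calibrated precisely so that this union bound just barely succeeds.
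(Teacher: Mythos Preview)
Your proposal is correct and follows essentially the same approach as the paper: the paper defines two ``bad'' families $Q_1$ (sets $A$ containing an edge) and $Q_2$ (sets $A$ in which some $v\in W$ has two neighbors), bounds $|Q_1|+|Q_2|$ by double counting to get $|F|\ge\bigl(1-\tau\epsilon\beta^{-1}\binom{f}{2}-o_k(1)\bigr)\binom{|U|}{f}$, and finishes exactly as you do using $f=\lfloor\sqrt{\beta/(\tau\epsilon)}\rfloor$ and $\binom{|U|}{f}\ge(\beta^f-o_k(1))\binom{k}{f}$. Your phrasing via bad pairs and a probabilistic union bound is just a cosmetic repackaging of the same count.
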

\begin{proof}
    Assume that $G[W] \cong H$. Therefore, our assumptions on $H$ imply
    \begin{align}
        \Delta(G[W])
         & = \Delta(H) \leq \epsilon k, \label{equation:g-w-low-max-degree} \\
        |U|
         & \geq \beta k. \label{equation:u-large}
    \end{align}
    Define
    \begin{align*}
        Q_1 & \coloneqq \left\{A \in \binom{U}{f} \mmid E(G[A]) \neq \emptyset \right\},                               \\
        Q_2 & \coloneqq \left\{A \in \binom{U}{f} \mmid \text{there exists a}~v \in W : \degree[A]{v} \geq 2 \right\},
    \end{align*}
    so that
    \begin{equation}\label{equation:f-repr}
        F = \binom{U}{f} \backslash (Q_1 \cup Q_2).
    \end{equation}
    Double-counting the number of pairs $(A, e)$ with $A \in Q_1, e \in E(G[A])$ yields
    \begin{equation}\label{equation:q1-bound}
        \left|Q_1\right| \leq \sum_{e \in E(G[U])} \binom{|U| - 2}{f - 2} \leq \left(\frac{\tau|U|}{2}\right) \binom{|U|-2}{f-2},
    \end{equation}
    where we used that $|E(G[U])| = \frac{1}{2}\sum_{v \in U} \degree[U]{v} \leq \frac{1}{2}\sum_{v \in U} \degree[W]{v} = \frac{\tau|U|}{2}$. Double-counting the number of tuples $(A, v, g)$ with $A \in Q_2$ and $v \in W$ and $g \in \binom{\neighbors[A]{v}}{2}$ yields
    \begin{equation}\label{equation:q2-bound}
        \left|Q_2\right|
        \leq \sum_{v \in W} \binom{\degree[U]{v}}{2} \binom{|U|-2}{f-2}
        \leq \sum_{v \in W} \frac{\epsilon k \cdot \degree[U]{v}}{2} \cdot \binom{|U|-2}{f-2}
        = \frac{\tau\epsilon k|U|}{2} \cdot \binom{|U|-2}{f-2},
    \end{equation}
    where we used that the degree of a vertex $v \in W$ into $U$ satisfies $\degree[U]{v} \leq \degree[W]{v} \leq \epsilon k$ by \cref{equation:g-w-low-max-degree} and that $\sum_{v \in W} \degree[U]{v} = \sum_{u \in U} \degree[W]{u} = \tau|U|$.
    From \cref{equation:f-repr,equation:q1-bound,equation:q2-bound}, we obtain
    \begin{equation*}
        |F| \geq \binom{|U|}{f} - \frac{\tau |U|}{2} \cdot \binom{|U|-2}{f-2} - \frac{\tau\epsilon k |U|}{2} \cdot \binom{|U|-2}{f-2}.
    \end{equation*}
    \cref{equation:u-large} implies that $|U| - 1 \geq \beta k - 1 \geq \Omega_k(k)$ and thus, $\frac{k|U|}{2} \leq (\beta^{-1} + o_k(1))\binom{|U|}{2}$, where we used that $\beta$ is fixed.
    By $\binom{|U|}{2}\binom{|U|-2}{f-2} = \binom{|U|}{f}\binom{f}{2}$ combined with $\tau, \epsilon, \beta$ fixed and the previous equation, we obtain
    \begin{equation*}
        |F| \geq \left(1 - \tau\epsilon\beta^{-1} \binom{f}{2} - o_k(1)\right) \binom{|U|}{f}.
    \end{equation*}
    The claim follows since $\binom{|U|}{f} \geq (\beta^f - o_k(1)) \binom{k}{f}$ by \cref{equation:u-large} and since $\tau\epsilon\beta^{-1} \binom{f}{2} \leq \frac{1}{2}$ by $f = \left\lfloor\sqrt{\frac{\beta}{\tau\epsilon}}\right\rfloor$.
\end{proof}
Since $|F| \geq 0$, \cref{claim:f-large} and $\IP[G[W] \cong H] = \indd{H}{G}$ imply
\begin{equation}\label{equation:ex-of-f-lower}
    \IE[|F|] \geq \left(\frac{1}{2} - o_k(1)\right) \beta^f \binom{k}{f} \indd{H}{G}.
\end{equation}
By \cref{equation:ex-of-f-upper,equation:ex-of-f-lower}, we have
\begin{equation*}
    \left(\frac{1}{2} - o_k(1)\right)\beta^f \binom{k}{f} \indd{H}{G} \leq \IE[|F|] \leq \left(\left(\frac{\tau^\tau}{\tau!e^\tau}\right)^f + o_k(1) + o_{n \mid k}(1)\right) \binom{k}{f}.
\end{equation*}
Therefore, since $\beta$ and $f$ are fixed,
\begin{equation*}
    \indd{H}{G} \leq 2\left(\frac{1}{\beta} \cdot \frac{\tau^\tau}{\tau!e^\tau}\right)^f + o_k(1) + o_{n \mid k}(1).
\end{equation*}
The desired conclusion follows by taking $n$ to infinity, as $G$ was an arbitrary $n$-vertex graph.

\section{Proof of Theorem~\ref{theorem:main-result-for-sparser-graphs}}\label{section:proof-of-theorem:main-result-for-sparser-graphs}

We start with defining detectable vertices, which are crucial in \cref{lemma:ind-if-sparse} that we prove in \cref{section:proof-of-lemma:ind-if-sparse}. In the rest of this section, we will prove general properties of detectable vertices. In particular, \cref{claim:characterization-of-detectable-vertices} provides an equivalent explicit criterion for when a vertex is detectable.
\begin{definition}
    For a graph $H$ and a non-isolated vertex $v \in V(H)$, we say that $v$ is \textit{obscure in $H$} if there exist non-isolated, distinct vertices $v_1, v_2$ with $\degree[][H - v_1]{v_2} > 0$,
    and an induced subgraph $H'$ of $H - v_1 - v_2$ with $M(H - v) \cong H'$.
    We say that a non-isolated vertex $v  \in V(H)$ is \textit{detectable in $H$} if $v$ is not obscure in $H$.
\end{definition}

\begin{definition}
    The number of vertices with degree $1$ in $H$ is denoted by $m_1(H)$ and the number of vertices with degree at least $2$ in $H$ is denoted by $m_{\geq 2}(H)$.
    We say that a non-isolated vertex $v \in V(H)$ is \textit{happy in $H$} if all its neighbors have degree at least $2$.
\end{definition}
Any vertex of degree $1$ is adjacent to exactly one vertex. Furthermore, any non-isolated, non-happy vertex is adjacent to some vertex of degree $1$. This yields the following bound:
\begin{fact}\label{fact:num-happy-vtcs-bound}
    The number of happy vertices in a graph $H$ is at least $m(H) - m_1(H) = m_{\geq 2}(H)$.
\end{fact}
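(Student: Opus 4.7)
The plan is to exploit the two observations the author states immediately before the fact: every degree-$1$ vertex has exactly one neighbor, and every non-isolated non-happy vertex has at least one degree-$1$ neighbor. Together these suggest a simple injection from the set of non-isolated non-happy vertices into the set of degree-$1$ vertices, which is exactly what is needed.

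Concretely, let $S$ denote the set of non-isolated non-happy vertices of $H$, and let $V_1$ denote the set of vertices of degree $1$, so $|V_1| = m_1(H)$. Since every $v \in S$ has some neighbor of degree $1$, I can pick one such neighbor and define a map $\phi : S \to V_1$. The key step is to check that $\phi$ is injective: if $\phi(v) = w$, then $w \in V_1$ is adjacent to $v$, and because $w$ has degree exactly $1$, the vertex $v$ is uniquely determined as the single neighbor of $w$. Hence $v$ can be recovered from $\phi(v)$, so $\phi$ is injective and $|S| \leq m_1(H)$.

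From there the conclusion is immediate: the number of happy vertices equals $m(H) - |S|$, which is at least $m(H) - m_1(H)$. Since every non-isolated vertex has degree either $1$ or at least $2$, we have $m(H) = m_1(H) + m_{\geq 2}(H)$, giving the claimed bound $m_{\geq 2}(H)$.

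I do not expect any real obstacle here: the fact is essentially a bookkeeping observation, and the injectivity of $\phi$ — which is the only nontrivial point — follows purely from the definition of a degree-$1$ vertex. The only mild care needed is making sure the definition of happy (\emph{all} neighbors of degree $\geq 2$, in particular allowing the happy vertex itself to have degree $1$) is handled correctly; this is already reflected in the statement that every non-happy non-isolated vertex has a degree-$1$ neighbor.
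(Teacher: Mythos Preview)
Your proof is correct and is precisely the argument the paper sketches: it notes the same two observations (a degree-$1$ vertex has a unique neighbor; a non-isolated non-happy vertex has a degree-$1$ neighbor) and leaves the reader to infer the bound, which you have made explicit via the injection~$\phi$. There is no substantive difference in approach.
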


\begin{claim}\label{claim:characterization-of-detectable-vertices}
    A vertex $v \in V(H)$ is detectable in $H$ if and only if it satisfies at least one of the following:
    \begin{itemize}
        \item[\namedlabel{item:condition1-happy}{(1)}] $v$ is happy in $H$,
        \item[\namedlabel{item:condition2-degree1}{(2)}] $v$ has degree $1$.
    \end{itemize}
\end{claim}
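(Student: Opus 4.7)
I plan to prove both directions separately. The reverse implication is handled by an explicit construction; the forward implication splits into two subcases, one controlled by vertex counting and one (the main obstacle) by edge counting.

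For the reverse implication, I argue the contrapositive. If $v$ is not happy and $d(v) \geq 2$, pick a neighbor $v_1$ of $v$ with $d(v_1) = 1$ (which exists since $v$ is not happy) and set $v_2 := v$. The pair $v_1, v_2$ is distinct and non-isolated, $d_{H-v_1}(v_2) = d(v) - 1 \geq 1$, and because $v_1$'s only neighbor in $H$ is $v$, the vertex $v_1$ is isolated in $H - v$, so $V(M(H-v)) \subseteq V(H) \setminus \{v_1, v_2\}$. Therefore $H' := M(H-v)$ is literally an induced subgraph of $H - v_1 - v_2$, witnessing that $v$ is obscure.

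For the forward implication, assume $v$ is happy or $d(v) = 1$, and suppose for contradiction that witnesses $v_1, v_2, H'$ exhibit $v$ as obscure. The key quantity is $m(H-v)$. If $v$ is happy, or $d(v) = 1$ with its unique neighbor of degree at least $2$, then deleting $v$ creates no new isolated vertex, so $m(H-v) = m(H) - 1$. Since $H' \cong M(H-v)$ has no isolated vertex, every vertex of $H'$ is non-isolated in $H - v_1 - v_2$, giving $m(H) - 1 = |V(H')| \leq m(H - v_1 - v_2) \leq m(H) - 2$, where the last inequality uses that $v_1, v_2$ are themselves non-isolated in $H$. This is a contradiction.

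The main obstacle is the remaining subcase: $d(v) = 1$ and the unique neighbor $w$ of $v$ also has degree $1$, so that $m(H-v) = m(H) - 2$ and vertex counting no longer forces a contradiction. Here I switch to edge counting. Since $|E(M(H-v))| = |E(H)| - 1$, while $|E(H - v_1 - v_2)| = |E(H)| - d(v_1) - d(v_2) + \one[v_1 v_2 \in E(H)]$, comparing via $|E(H')| \leq |E(H - v_1 - v_2)|$ and using $d(v_1), d(v_2) \geq 1$ forces $v_1 v_2 \in E(H)$ and $d(v_1) = d(v_2) = 1$. But then the only neighbor of $v_2$ is $v_1$, so $d_{H - v_1}(v_2) = 0$, contradicting the assumption $d_{H - v_1}(v_2) > 0$ in the definition of obscurity.
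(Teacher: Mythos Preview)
Your argument is correct and follows essentially the same route as the paper. Two minor remarks. First, in what you call the ``reverse implication'' you assume $d(v)\ge 2$, but the negation of (2) also allows $d(v)=0$; that case needs one sentence (an isolated vertex is not detectable by definition). Second, the case split in your other direction is avoidable: the paper derives, uniformly from the obscurity witnesses, the two parallel chains
\[
m(H-v)=m(H')\le m(H-v_1-v_2)\le m(H-v_1)-1\le m(H)-2,
\]
\[
|E(H-v)|=|E(H')|\le |E(H-v_1-v_2)|\le |E(H-v_1)|-1\le |E(H)|-2,
\]
using $d_{H-v_1}(v_2)>0$ for the middle step in each. The first chain rules out (1) and the second rules out (2), so no subcase analysis is needed; your edge-counting in Case~B is just this second chain specialized to that subcase.
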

\begin{proof}
    Assume that $v \in V(H)$ satisfies neither \ref{item:condition1-happy} nor \ref{item:condition2-degree1}. If $v$ is isolated, then it is not detectable by definition. Now, we consider the case that $v$ is non-isolated. Since \ref{item:condition2-degree1} does not hold, it follows that $v$ has degree at least $2$. Therefore, since \ref{item:condition1-happy} does not hold, $v$ has a neighbor $w$ of degree $1$. Thus, $v$ is the unique neighbor of $w$, which implies $M(H - v) \cong M(H - w - v)$.
    Since $v$ has degree at least $2$, we have $\degree[][H-w]{v} > 0$. Therefore, $v$ is obscure in $H$. Consequently, $v$ is not detectable in $H$.

    We now assume that $v$ is not detectable in $H$. If $v$ is isolated, \ref{item:condition1-happy} and \ref{item:condition2-degree1} trivially do not hold. Now, suppose that $v$ is non-isolated. Therefore, since $v$ is not detectable, $v$ is obscure in $H$. Thus, we can find distinct non-isolated vertices $v_1, v_2 \in V(H)$ such that $\degree[][H - v_1]{v_2} > 0$ and an induced subgraph $H'$ of $M(H - v_1 - v_2)$ with $M(H - v) \cong H'$.
    It follows that
    \begin{alignat*}{6}
        m(H - v)   & =\, & m(H')   & \leq\, & m(H - v_1 - v_2)   & \leq\, & m(H - v_1) - 1   & \leq\, & m(H) - 2,   \\
        |E(H - v)| & =\, & |E(H')| & \leq\, & |E(H - v_1 - v_2)| & \leq\, & |E(H - v_1)| - 1 & \leq\, & |E(H)| - 2,
    \end{alignat*}
    which imply that \ref{item:condition1-happy} and \ref{item:condition2-degree1} do not hold, respectively.
\end{proof}

Consider any graph $H$ with $\ell \geq 2$ edges and let $v \in V(H)$ be detectable in $H$. If $v$ has degree $1$, then $H - v$ has $\ell - 1 \geq 1$ edges. If $v$ does not have degree $1$, it follows by \cref{claim:characterization-of-detectable-vertices} and since $v$ is detectable in $H$ that $v$ is happy in $H$. Thus, $v$ has some neighbor $w$ and $w$ has degree $\degree{w} \geq 2$. Hence, $H - v$ has at least $\degree{w} - 1 \geq 1$ edges. So, we obtain the following:
\begin{fact}\label{fact:graph-minus-detectable-vertex}
    If a graph $H$ has $\ell \geq 2$ edges and $v$ is dectable in $H$, then $H - v$ has at least one edge.
\end{fact}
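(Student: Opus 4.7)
The plan is to use the explicit characterization of detectable vertices provided by \cref{claim:characterization-of-detectable-vertices} and split into two short cases based on the degree of $v$.

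First, I would dispose of the easy case where $v$ has degree exactly $1$. In that case, removing $v$ from $H$ deletes precisely the one edge incident to $v$, so $H - v$ has exactly $\ell - 1$ edges. Since $\ell \geq 2$ by assumption, this gives at least one edge in $H - v$, as required.

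Next, I would handle the case where $v$ does not have degree $1$. Since $v$ is detectable in $H$, \cref{claim:characterization-of-detectable-vertices} tells us $v$ satisfies condition \ref{item:condition1-happy} or \ref{item:condition2-degree1}; as we have excluded the latter, $v$ must be happy in $H$. In particular, $v$ is non-isolated, so it has at least one neighbor $w$, and happiness ensures $\degree{w} \geq 2$. Therefore, even after deleting $v$, the vertex $w$ retains at least $\degree{w} - 1 \geq 1$ neighbors in $H - v$, so $H - v$ contains at least one edge.

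Combining the two cases yields the fact. I do not anticipate a substantive obstacle here: the entire content is really packaged into \cref{claim:characterization-of-detectable-vertices}, and the only mild subtlety is remembering that a happy vertex is by definition non-isolated, so that the neighbor $w$ exists and has the required property.
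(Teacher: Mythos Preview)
Your proposal is correct and follows essentially the same argument as the paper: split on whether $\degree{v}=1$, and in the remaining case use \cref{claim:characterization-of-detectable-vertices} to conclude $v$ is happy, then observe a neighbor $w$ of $v$ keeps degree at least $\degree{w}-1\geq 1$ in $H-v$.
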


\begin{definition}
    Consider a positive integer $k$ and a graph $H$ on $k$ vertices. We say that a labeling $v_1, \dots, v_k$ of $V(H)$ is \textit{bright} if there are at least two indices $i$ with $1 \leq i \leq k$ and $\degree[\{v_1, \dots, v_i\}]{v_i} > 0$ and the two largest such indices $i_1 < i_2$ have the property that $v_{i_1}, v_{i_2}$ are detectable in $H$. We denote the probability that a uniformly random labeling $v_1, \dots, v_k$ of $V(H)$ is bright by $\brightness{H}$.
\end{definition}

The proof of the following lemma is deferred to \cref{section:proof-of-lemma:ind-if-sparse}.

\begin{lemma}\label{lemma:ind-if-sparse}
    Let $\alpha > 0$ be fixed. Consider positive integers $k$ and $\ell$ with $\ell \geq 2$ and a graph $H$ with $k$ vertices and $\ell$ edges such that $m(H) \leq \alpha k$. Then, we have
    \begin{equation*}
        \ind(H) \leq \frac{2 + 3e^2 \alpha}{2 + (e-2)\brightness{H}} \cdot \frac{1}{e} + 2\alpha.
    \end{equation*}
\end{lemma}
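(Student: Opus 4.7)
My plan is to implement the sampling strategy outlined in the paper's proof-approach section. Let $G$ be a large $n$-vertex host graph and sample $v_1,\ldots,v_k$ i.i.d.\ uniformly from $V(G)$; write $W_i\coloneqq\{v_1,\ldots,v_i\}$ and $W\coloneqq W_k$. With probability $1-o_{n\mid k}(1)$ all samples are distinct, so $\IP[G[W]\cong H]=\indd{H}{G}+o_{n\mid k}(1)$. Let $\E\coloneqq\{G[W]\cong H\}$; on $\E$ the induced labeling of $V(H)$ is uniformly random, hence bright with probability exactly $\brightness{H}$.

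I would define a nonnegative random variable $X=X(v_1,\ldots,v_k)$ such that on $\E$, $X$ takes the value $e$ when the induced labeling is bright and the value $2$ otherwise, so that $\IE[X\mid\E]=2+(e-2)\brightness{H}$. Since $\IE[X\cdot\one_\E]=\IE[X\mid\E]\cdot\IP[\E]$, it then suffices to prove the unconditional estimate
\[
\IE[X\cdot\one_\E] \leq \frac{2+3e^2\alpha}{e} + 2\alpha\bigl(2+(e-2)\brightness{H}\bigr),
\]
after which the lemma follows by dividing and letting $n\to\infty$.

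To prove this estimate, let $I_1<I_2$ be the two largest indices $i$ in $[k]$ with $\degree[W_i]{v_i}>0$, and union-bound $\IE[X\cdot\one_\E]$ over the values $(i_1,i_2)$ of $(I_1,I_2)$. For each such pair, the event $\E\cap\{(I_1,I_2)=(i_1,i_2)\}$ forces (i) $v_{i_1},v_{i_2}$ to occupy specific ``slots'' in $H$ and (ii) each sample $v_j$ with $j>i_2$ to land on an isolated vertex of $H$ under the induced isomorphism. Step (ii) is a hypergeometric tail contributing a Poisson-point probability of $\tfrac{1}{e}+o_k(1)$ per edge-creating step, via \cref{lemma:poisson-point-probability-decreasing} and arguments analogous to those of \cref{section:proof-of-lemma:ind-if-high-degree-vtcs}. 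The key input for step (i) is the definition of obscure together with \cref{claim:characterization-of-detectable-vertices} and \cref{fact:graph-minus-detectable-vertex}: on the bright event, the ``slots'' of $v_{i_1},v_{i_2}$ in $H$ are pinned down by $G[W_{i_2-1}]$ up to automorphism, so that both edge-creating positions contribute an independent $\tfrac{1}{e}$ factor, yielding $\tfrac{2}{e^2}+o_k(1)$ on the bright part and only $\tfrac{1}{e}+o_k(1)$ on the non-bright part. Weighting by the $X$-values $e$ and $2$ respectively produces the main term $\tfrac{2+3e^2\alpha}{e}$, while the $3e^2\alpha$ and $2\alpha$ corrections absorb the probability $\leq m(H)/n\leq\alpha$ per index that $v_i$ lies on an isolated vertex of $H$.

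The main obstacle I foresee is precisely the rigidity argument in step (i): given $G[W_{i_2-1}]$ and the information that $v_{i_2}$ (and analogously $v_{i_1}$) is detectable in $G[W]$, showing that only a bounded number of pairs of slots $(s_1,s_2)\in V(H)^2$ are consistent. The definition of obscure is engineered to rule out alternative slot choices for a \emph{single} detectable vertex; extending this to \emph{two} simultaneous detectable vertices, modulo automorphisms of $H$ that permute isolated vertices, is the delicate combinatorial step. Once it is in place, the remainder of the proof is routine Poisson/hypergeometric bookkeeping following the template of \cref{section:proof-of-lemma:ind-if-high-degree-vtcs}, concluded by taking $n\to\infty$.
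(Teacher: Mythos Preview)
Your proposal has a genuine arithmetic gap that points to a missing structural idea. You want $\IE[X\cdot\one_\E]\le \tfrac{2}{e}+O(\alpha)$, and you claim this will follow from $\IP[\E,\text{bright}]\le \tfrac{2}{e^2}+o_k(1)$ and $\IP[\E,\text{non-bright}]\le \tfrac{1}{e}+o_k(1)$. But weighting these by $X=e$ and $X=2$ gives $e\cdot\tfrac{2}{e^2}+2\cdot\tfrac{1}{e}=\tfrac{4}{e}$, which is twice too large. The point is that the two bounds cannot be saturated simultaneously: whatever parameter in $G$ drives the bright probability close to $2/e^2$ necessarily pushes the non-bright probability well below $1/e$, and vice versa. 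The paper makes this precise via a trade-off parameter $\lambda\in[0,1]$ (its \cref{claim:bound-for-colors-of-red-green-vertex-sequence}, resting on \cref{lemma:bound-lin-comb-of-poly-times-negative-ext}): there exist $C,D\ge 0$ depending on $G$ with $\IP[\text{bright-type}]\le \tfrac{C^2}{2}e^{-C-D}$ and $\IP[\text{non-bright-type}]\le De^{-C-D}$, and the elementary inequality $e^{y+z}\ge \tfrac{y^2e^2}{4}+ze$ then furnishes the required convex combination. Your $X$-weighting trick cannot bypass this; some form of the $\lambda$ argument is essential.

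Relatedly, your ``rigidity'' plan in step~(i) is not yet a plan. In the paper the independence you hope for is \emph{manufactured} by a three-colour scheme: a term $v_i$ is declared \textit{black} unless $M(G[\{\text{black predecessors}\}\cup\{v_i\}])$ equals $M(H)$ or $M(H-v')$ for some detectable $v'$; non-black terms are then \textit{red} or \textit{green}. Conditioning on the black subsequence $\vb U$ makes the gap lengths between consecutive black terms independent geometrics, and the red/green colours within each gap i.i.d., which is exactly what lets one compute $\IP[\A_1]$ and $\IP[\A_2]$ and invoke the trade-off lemma. The definition of detectability is used not to ``pin down slots modulo automorphism'' as you suggest, but to guarantee that on $\E$ the last two edge-creating indices $v_{i_{r-1}},v_{i_r}$ are precisely the non-black terms (\cref{claim:small-index-is-black,claim:e-subset}), so that $\E\subseteq\A_1\cup\A_2$ and the bright event forces $\A_1$. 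Without such a device, a union bound over $(i_1,i_2)$ gives you no handle on the probability that $v_j$ for $j<i_1$ already forms the right prefix, and your sketch does not address this. (Minor: your error term $m(H)/n$ should be $m(H)/k$; the $2\alpha$ loss in the paper comes from passing from $\E_k$ to $\E_{k-2}\cap\E_k$, not from per-index isolation probabilities.)
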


Let us fix $\alpha > 0$ such that
\begin{equation*}
    c \coloneqq \frac{2 + 3e^2 \alpha}{2 + (e-2)\frac{1}{12}} \cdot \frac{1}{e} + 2\alpha < \frac{1}{e}.
\end{equation*}
By $\alpha > 0$ and $\frac{1}{12} \leq 1$, we have $c \geq \frac{2}{2 + (e-2)} \cdot \frac{1}{e} = \frac{2}{e^2}$.
Let $k$ and $\ell$ be positive integers with $\ell \geq 2$ and let $H$ be a graph with $k$ vertices and $\ell$ edges such that $m(H) \leq \alpha k$.
\begin{remark}
    By \cref{fact:graph-minus-detectable-vertex}, if all non-isolated vertices in $H$ are detectable, then $\nu(H) = 1$. Under the additional assumption $m(H) \leq o_k(1)$, \cref{lemma:ind-if-sparse} implies the bound $\ind(H) \leq \frac{2}{e^2} + o_k(1)$ of \cref{conjecture:upper-bound-2-over-e-squared}.
\end{remark}

\begin{claim}\label{claim:beta-bound-by-m2}
    We have $\brightness{H} \geq \binom{m_{\geq 2}(H)}{2} \binom{m(H)}{2}^{-1}$.
\end{claim}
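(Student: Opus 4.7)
The plan is to reduce the problem to a symmetry computation by routing it through the happy vertices rather than the degree-$\geq 2$ vertices directly. Let $A_H$ denote the set of happy vertices of $H$. By \cref{fact:num-happy-vtcs-bound}, $|A_H| \geq m_{\geq 2}(H)$, so since $n \mapsto \binom{n}{2}$ is nondecreasing it suffices to prove the stronger bound $\brightness{H} \geq \binom{|A_H|}{2} \binom{m(H)}{2}^{-1}$. The plan is to derive this from the symmetry fact that, under a uniformly random labeling of $V(H)$, the unordered pair consisting of the second-to-last and the last non-isolated vertices is uniformly distributed over $\binom{V(M(H))}{2}$; the probability that both of its vertices lie in $A_H$ is therefore exactly $\binom{|A_H|}{2} \binom{m(H)}{2}^{-1}$. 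It will then remain to show that whenever both of these vertices are happy, the labeling is bright.

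To establish this key implication, consider a labeling in which $v_p$ and $v_{i_2}$ (with $p < i_2$) are the second-to-last and the last non-isolated vertices, both happy. Every position strictly greater than $i_2$ is occupied by an isolated vertex, which has no earlier neighbors, so the maximum of $I \coloneqq \{i : \degree[\{v_1,\dots,v_i\}]{v_i} > 0\}$ coincides with $i_2$. I will then argue $p \in I$, from which $i_1 = p$ follows because the intermediate positions $p < j < i_2$ are all occupied by isolated vertices and hence not in $I$. If $\degree{v_p} \geq 2$, then since $v_{i_2}$ is the only non-isolated vertex after position $p$, at most one neighbor of $v_p$ lies after it, so at least one neighbor precedes $v_p$. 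If $\degree{v_p} = 1$ and its unique neighbor precedes $v_p$, again $p \in I$. Happiness of $v_{i_2}$ is invoked exactly to exclude the remaining case $\degree{v_p} = 1$ with $v_p$'s unique neighbor equal to $v_{i_2}$: in that configuration $v_{i_2}$ would have the degree-$1$ neighbor $v_p$, contradicting happiness. Finally, \cref{claim:characterization-of-detectable-vertices} gives that happy vertices are detectable, so $v_{i_1}, v_{i_2}$ are both detectable and the labeling is bright.

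The main (and only) subtle point is that last configuration, and it is also the reason the hypothesis must involve happiness of both of the last two non-isolated vertices rather than merely the weaker condition that both have degree at least $2$: it is precisely the happiness of $v_{i_2}$ that forbids $v_{i_2}$ from having a degree-$1$ neighbor and thereby eliminates the pathological pattern in which $v_p$ would have its only neighbor strictly after it and thus fail to lie in $I$.
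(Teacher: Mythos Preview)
Your argument is correct and follows essentially the same route as the paper: both proofs pass through the set of happy vertices via \cref{fact:num-happy-vtcs-bound}, show that when the last two non-isolated vertices in a random labeling are happy the labeling is bright (with the crux being that happiness of the last one forbids it from being the unique neighbor of the second-to-last), and then invoke \cref{claim:characterization-of-detectable-vertices} together with the symmetry computation. The paper compresses your case split on $\degree{v_p}$ into the single observation that $v_{i_2}$ cannot be the unique neighbor of $v_{i_1}$, but the content is identical.
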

\begin{proof}
    Since $H$ has $\ell \geq 2 > 0$ edges, we have $m(H) \geq 2$. Consider a uniformly random labeling $v_1, \dots, v_k$ of $V(H)$ and define the event
    \begin{equation*}
        \A \coloneqq \{\text{the last two non-isolated terms among $v_1, \dots, v_k$ are happy}\}.
    \end{equation*}
    Let us assume that $\A$ holds. Hence, the last two non-isolated terms $v_{i_1}, v_{i_2}$, $i_1 < i_2$, among $v_1, \dots, v_k$ are happy. Moreover, it is not possible that $v_{i_2}$ is the unique neighbor of $v_{i_1}$ since $v_{i_2}$ is happy. Therefore, $v_{i_1}, v_{i_2}$ are both adjacent to a vertex that appears before them in the sequence $v_1, \dots, v_k$ and hence, they are the last two such vertices. Since $v_{i_1}, v_{i_2}$ are happy and thus, by \cref{claim:characterization-of-detectable-vertices}, detectable, we conclude that the sequence $v_1, \dots, v_k$ is bright.
    We obtain $\A \subseteq \{v_1, \dots, v_k~\text{is bright}\}$. Hence, the probability that $v_1, \dots, v_k$ is bright, which is given by $\nu(H)$, is lower bounded by $\IP[A]$. Therefore, \cref{fact:num-happy-vtcs-bound} implies
    \begin{equation*}
        \brightness{H} \geq \IP[\A]
        = \binom{|\{v \in V(H)~\text{happy}\}|}{2} \binom{m(H)}{2}^{-1} \geq \binom{m_{\geq 2}(H)}{2} \binom{m(H)}{2}^{-1}.\qedhere
    \end{equation*}
\end{proof}

\begin{claim}\label{claim:beta-bound-by-m1}
    We have
    \begin{equation*}
        \brightness{H} \geq \left(\frac{m_1(H)(m_1(H)-2)}{2}\right) \binom{m(H)}{2}^{-1}.
    \end{equation*}
    Furthermore, if $m_1(H) = 2$ and the two vertices of degree $1$ are non-adjacent, then $\brightness{H} \geq \binom{m(H)}{2}^{-1}$.
\end{claim}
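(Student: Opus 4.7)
The plan is to mirror the proof of \cref{claim:beta-bound-by-m2}, but replacing the role of happy vertices with vertices of degree $1$ (which are detectable by part \ref{item:condition2-degree1} of \cref{claim:characterization-of-detectable-vertices}). The subtlety is that two adjacent degree-$1$ vertices form an isolated edge: if they happen to be the last two non-isolated terms in the random labeling, then neither needs a neighbor appearing strictly before it except possibly each other, and in that case the labeling may fail to be bright. The fix is to restrict to pairs of \emph{non-adjacent} degree-$1$ vertices, which is exactly where the correction $-m_1(H)/2$ in the stated bound comes from.

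Concretely, consider a uniformly random labeling $v_1, \ldots, v_k$ of $V(H)$, and define the event
\[
    \B \coloneqq \{\text{the last two non-isolated terms of $v_1, \dots, v_k$ are non-adjacent vertices of degree $1$ in $H$}\}.
\]
I would first show $\B \subseteq \{v_1, \dots, v_k \text{ is bright}\}$. Suppose $\B$ holds and let $v_{i_1}, v_{i_2}$ with $i_1 < i_2$ be the last two non-isolated terms. For every $j > i_1$ with $j \neq i_2$, the vertex $v_j$ is isolated in $H$, so $\degree[\{v_1,\ldots,v_j\}]{v_j} = 0$. Since $v_{i_2}$ has degree $1$ in $H$ and is the last non-isolated term, its unique neighbor must appear earlier, giving $\degree[\{v_1,\ldots,v_{i_2}\}]{v_{i_2}} \geq 1$. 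Similarly, the unique neighbor of $v_{i_1}$ is non-isolated and, by non-adjacency to $v_{i_2}$, is distinct from $v_{i_2}$; thus it appears before $v_{i_1}$, yielding $\degree[\{v_1,\ldots,v_{i_1}\}]{v_{i_1}} \geq 1$. So $i_1, i_2$ are the two largest such indices, and both $v_{i_1}, v_{i_2}$ are detectable by \ref{item:condition2-degree1} of \cref{claim:characterization-of-detectable-vertices}, making the labeling bright.

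Next I would compute $\IP[\B]$. As in the proof of \cref{claim:beta-bound-by-m2}, the unordered pair of last two non-isolated terms is uniformly distributed over the $\binom{m(H)}{2}$ size-$2$ subsets of non-isolated vertices. Let $P$ denote the number of edges of $H$ whose both endpoints have degree $1$. Each such edge uses two distinct degree-$1$ vertices, and a degree-$1$ vertex lies in at most one edge, so $2P \leq m_1(H)$. Hence the number of pairs of non-adjacent degree-$1$ vertices is at least
\[
    \binom{m_1(H)}{2} - P \;\geq\; \binom{m_1(H)}{2} - \frac{m_1(H)}{2} \;=\; \frac{m_1(H)(m_1(H)-2)}{2},
\]
and therefore $\brightness{H} \geq \IP[\B] \geq \frac{m_1(H)(m_1(H)-2)}{2}\binom{m(H)}{2}^{-1}$, proving the first bound.

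For the ``furthermore'' assertion, if $m_1(H) = 2$ and the two degree-$1$ vertices are non-adjacent, then $P = 0$, so the number of pairs of non-adjacent degree-$1$ vertices equals $\binom{2}{2} = 1$, yielding $\brightness{H} \geq \binom{m(H)}{2}^{-1}$. No step is really an obstacle; the only thing to be careful about is verifying that under $\B$ no index strictly between $i_1$ and $i_2$, and no index beyond $i_2$, contributes to the ``has an earlier neighbor'' count, which follows directly from those positions being occupied by vertices isolated in $H$.
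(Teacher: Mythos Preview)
Your proof is correct and follows essentially the same route as the paper: define the event that the last two non-isolated terms in a random labeling are non-adjacent degree-$1$ vertices, show this forces brightness via \cref{claim:characterization-of-detectable-vertices}\ref{item:condition2-degree1}, and lower-bound its probability by counting non-adjacent degree-$1$ pairs. Your count via $\binom{m_1(H)}{2}-P$ with $2P\le m_1(H)$ is a cosmetic variant of the paper's per-vertex count (``each degree-$1$ vertex has at least $m_1(H)-2$ non-adjacent degree-$1$ partners''), yielding the identical bound.
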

\begin{proof}
    Since $H$ has $\ell \geq 2 > 0$ edges, we have $m(H) \geq 2$. Consider a uniformly random labeling $v_1, \dots, v_k$ of $V(H)$ and define the event
    \begin{equation*}
        \A \coloneqq \{\text{the last two non-isolated terms among $v_1, \dots, v_k$ are of degree $1$ and non-adjacent}\}.
    \end{equation*}
    Let us assume that $\A$ holds. Hence, the last two non-isolated terms $v_{i_1}, v_{i_2}$, $i_1 < i_2$, among $v_1, \dots, v_k$ are detectable by \cref{claim:characterization-of-detectable-vertices}. Since $v_{i_1}, v_{i_2}$ are non-adjacent and non-isolated, $v_{i_1}, v_{i_2}$ are both adjacent to a vertex that appears before them in the sequence $v_1, \dots, v_k$ and hence, they are the last two such vertices. Since $v_{i_1}, v_{i_2}$ are detectable, we conclude that $v_1, \dots, v_k$ is bright.
    We obtain $\A \subseteq \{v_1, \dots, v_k~\text{is bright}\}$. Hence, the probability that $v_1, \dots, v_k$ is bright, which is given by $\nu(H)$, is lower bounded by $\IP[A]$.
    For any vertex $v$ of degree $1$, there are at least $m_1(H) - 2$ vertices $w$ of degree $1$ such that $v \neq w$ and $v$ and $w$ are non-adjacent. Thus,
    \begin{equation*}
        \brightness{H} \geq \IP[\A] \geq \left(\frac{m_1(H)(m_1(H)-2)}{2}\right) \binom{m(H)}{2}^{-1}.
    \end{equation*}
    Furthermore, if $m_1(H) = 2$ and the two vertices of degree $1$ are non-adjacent, we have
    \begin{equation*}
        \brightness{H} \geq \IP[\A] = \binom{m(H)}{2}^{-1}.\qedhere
    \end{equation*}
\end{proof}

\begin{claim}\label{claim:beta-bound}
    We have $\brightness{H} \geq \frac{1}{12}$.
\end{claim}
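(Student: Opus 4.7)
My plan is to obtain the constant $1/12$ as a straightforward consequence of \cref{claim:beta-bound-by-m2,claim:beta-bound-by-m1}, via a short case analysis on $a \coloneqq m_1(H)$ and $b \coloneqq m_{\geq 2}(H)$. Setting $n \coloneqq m(H) = a + b$, the first observation is that $n \geq 3$: indeed, $\ell \geq 2$ and two non-isolated vertices can support at most a single edge between them.

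I would then split on the value of $a$. The easy case is $a \leq 1$: here $b \geq n - 1 \geq 2$, and \cref{claim:beta-bound-by-m2} immediately gives $\brightness{H} \geq \binom{n-1}{2}\binom{n}{2}^{-1} = (n-2)/n \geq 1/3$. The case $a = 2$ splits in two. If the two degree-$1$ vertices are non-adjacent, the second half of \cref{claim:beta-bound-by-m1} provides $\brightness{H} \geq 1/\binom{n}{2}$, which handles $n \leq 4$, while for $n \geq 5$ I would apply \cref{claim:beta-bound-by-m2} with $b = n - 2 \geq 3$ to get $\binom{n-2}{2}\binom{n}{2}^{-1} \geq 3/10$. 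If the two degree-$1$ vertices are adjacent, they form a separate edge component; the remaining $\ell - 1 \geq 1$ edges must then lie entirely among the $b$ vertices of degree at least $2$, and since any graph of minimum degree $\geq 2$ has at least three vertices, one obtains $b \geq 3$, from which \cref{claim:beta-bound-by-m2} once again suffices. Finally, for $a \geq 3$ I would use the elementary inequality $a(a-2) \geq a^2/3$: when $a \geq n/2$, \cref{claim:beta-bound-by-m1} yields $\brightness{H} \geq a^2/(3n(n-1)) \geq n/(12(n-1)) > 1/12$; otherwise $b > n/2$, so $b \geq \lceil (n+1)/2 \rceil$ and \cref{claim:beta-bound-by-m2} gives $\brightness{H} \geq (n+1)/(4n) > 1/4$.

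I do not anticipate any substantial obstacle. The only mildly structural step is the short argument that $b \geq 3$ in the adjacent subcase of $a = 2$; the rest is arithmetic verification using the two prior claims. In fact, the same analysis shows the stronger bound $\brightness{H} \geq 1/6$, so the target $1/12$ follows with a factor of $2$ to spare.
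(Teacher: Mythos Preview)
Your proof is correct and follows essentially the same approach as the paper: a short case analysis on $m_1(H)$ versus $m(H)$, appealing to \cref{claim:beta-bound-by-m2,claim:beta-bound-by-m1}. The paper partitions into the three cases $m_1(H) \geq \max\{3, m(H)/2\}$, $m_1(H) < m(H)/2$, and $2 \geq m_1(H) \geq m(H)/2$, while you split on $a \in \{0,1\}$, $a = 2$, $a \geq 3$; the arithmetic is nearly identical.

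One correction: your closing remark that ``the same analysis shows the stronger bound $\nu(H) \geq 1/6$'' is false. In your case $a \geq 3$, $a \geq n/2$, the bound you actually obtain is $a(a-2)/(n(n-1))$, and for $a = 3$, $n = 5$ (e.g.\ a double star with three leaves) this is $3/20 < 1/6$; even combining with \cref{claim:beta-bound-by-m2}, which here gives only $\binom{2}{2}/\binom{5}{2} = 1/10$, you do not reach $1/6$. This does not affect the validity of the main argument for $1/12$.
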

\begin{proof}
    If $m_1(H) \geq 3$, we have $m_1(H) - 2 \geq \frac{m_1(H)}{3}$. Therefore, in the case $m_1(H) \geq \max\left\{3,\frac{m(H)}{2}\right\}$, \cref{claim:beta-bound-by-m1} implies
    \begin{equation*}
        \brightness{H} \geq \frac{1}{3} \left(\frac{m_1(H)}{m(H)}\right)^2 \geq \frac{1}{12}.
    \end{equation*}
    Now, we consider the case that $m_1(H) < \frac{m(H)}{2}$. Hence, $m_{\geq 2}(H) = m(H) - m_1(H) > \frac{m(H)}{2}$. In particular, $m_{\geq 2}(H) > \frac{m(H)}{2} \geq 1$, where we used that $m(H) \geq 2$ by $\ell \geq 2 > 0$. Since $m_{\geq 2}(H)$ is integral, we obtain $m_{\geq 2}(H) \geq 2$, which implies $m_{\geq 2}(H) - 1 \geq \frac{m_{\geq 2}(H)}{2}$. Therefore, by \cref{claim:beta-bound-by-m2},
    \begin{equation*}
        \brightness{H} \geq \frac{1}{2} \left(\frac{m_{\geq 2}(H)}{m(H)}\right)^2 > \frac{1}{8}.
    \end{equation*}
    Lastly, we consider the remaining case that $2 \geq m_1(H) \geq \frac{m(H)}{2}$.
    For $m_1(H) \leq 1$, it follows that $m(H) \leq 2m_1(H) \leq 2$, which is impossible since $\ell \geq 2$ implies $m(H) > 2$.
    For $m_1(H) = 2$, we have $m(H) \leq 2m(H) = 4$.
    From $m_1(H) = 2$, $m(H) \leq 4$, and $\ell \geq 2$, it follows that the two degree-$1$ vertices are non-adjacent. Therefore, \cref{claim:beta-bound-by-m1} with $m(H) \leq 4$ yields
    \begin{equation*}
        \brightness{H} \geq \binom{m(H)}{2}^{-1} \geq \frac{1}{6}.\qedhere
    \end{equation*}
\end{proof}

By \cref{lemma:ind-if-sparse,claim:beta-bound}, we finally obtain
\begin{equation*}
    \ind(H) \leq \frac{2 + 3e^2 \alpha}{2 + (e-2)\brightness{H}} \cdot \frac{1}{e} + 2\alpha \leq \frac{2 + 3e^2 \alpha}{2 + (e-2)\frac{1}{12}} \cdot \frac{1}{e} + 2\alpha = c.
\end{equation*}
The desired conclusion follows by $c \in \left[\frac{2}{e^2}, \frac{1}{e}\right)$.

\section{Proof of Lemma~\ref{lemma:ind-if-sparse}}\label{section:proof-of-lemma:ind-if-sparse}

Since $H$ has $k$ vertices and $\ell \geq 2$ edges, it follows that $k \geq 3$.
Let $n \geq k$ and let $G$ be a graph on $n$ vertices. Let $\vb V = (v_1, v_2, \dots) \in V(G)^\IN$ be uniformly random. For all positive integers $j$, we define $G_j \coloneqq G[\{v_1, \dots, v_j\}]$, and the event
\begin{equation*}
    \E_j \coloneqq \{v_1, \dots, v_j~\text{distinct}\} \cap \{M(G_j) \cong M(H)\}.
\end{equation*}
In particular, $\E_k = \{G_k \cong H\}$. Therefore, by symmetry,
\begin{equation}
    \indd{H}{G} = \IP[\E_k \mid v_1, \dots, v_k~\text{distinct}] = \IP[\E_k] + o_{n \mid k}(1), \label{equation:induced-density-by-p-of-e-k}
\end{equation}
where we used $\IP[v_1, \dots, v_k~\text{not distinct}] \leq o_{n \mid k}(1)$. For $w_1, \dots, w_k \in V(G)$ with $G[\{w_1, \dots, w_k\}] \cong H$, the distribution of $(v_1, \dots, v_k)$ conditioned on $\{v_1, \dots, v_k\} = \{w_1, \dots, w_k\}$ is a uniformly random labeling of $\{w_1, \dots, w_k\}$ by symmetry. Therefore, if $\IP[\E_k] > 0$, then, conditioned on $\E_k$, the labeled graph $(G_k, v_1, \dots, v_k)$ has the distribution of $(H, u_1, \dots, u_k)$, where $u_1, \dots, u_k$ is a uniformly random labeling of $V(H)$. We can use this to upper-bound $\IP[\E_k \backslash \E]$, where we define, by $k \geq 3$,
\begin{equation*}
    \E \coloneqq \E_{k-2} \cap \E_k = \E_{k-2} \cap \E_{k-1} \cap \E_k.
\end{equation*}
If $\IP[\E_k] = 0$, then $\IP[\E_k \backslash \E] = 0$. Otherwise, by conditioning on $\{v_1, \dots, v_k\}$,
\begin{equation*}
    \IP[\E_k \backslash \E] \leq \IP[\E^c \mid \E_k] = 1 - \binom{k-m(H)}{2}\binom{k}{2}^{-1} = \frac{-m(H)^2 + m(H)(2k-1)}{k(k-1)} \leq \frac{2m(H)}{k} \leq 2\alpha,
\end{equation*}
where we used $m(H) \leq \alpha k$ and $-m(H)^2 \leq -m(H)$ since $m(H) \geq 1$ by $\ell \geq 2 > 0$. Therefore, by \cref{equation:induced-density-by-p-of-e-k},
\begin{equation}
    \indd{H}{G} = \IP[\E_k] + o_{n \mid k}(1) = \IP[\E] + \IP[\E_k \backslash \E] + o_{n \mid k}(1) \leq \IP[\E] + 2\alpha + o_{n \mid k}(1). \label{equation:induced-density-by-p-of-e}
\end{equation}
Now, we will upper-bound $\IP[\E]$. Without loss of generality, we assume $\IP[\E] > 0$. We define a rule that, for any possible value of $\vb V$, assigns one of the colors black, green or red to each term $v_i$ of the sequence $\vb V$.
\begin{definition}
    For a positive integer $i$, we color $v_i$ \textit{black} if and only if neither of the following properties are satisfied by $\hat G \coloneqq M(G[\{v_j \mid 1 \leq j < i, v_j~\text{is black}\} \cup \{v_i\}])$:
    \begin{itemize}
        \item[(i)] There exists a vertex $v' \in V(H)$ such that $v'$ is detectable in $H$ and $\hat G \cong M(H - v')$,
        \item[(ii)] $\hat G \cong M(H)$.
    \end{itemize}
    Define $L \coloneqq \min\{i \geq 1 \mid \text{there are $k-2$ black terms among $v_1, \dots, v_i$}\}$ with $\min \emptyset \coloneqq \infty$. Denote the subsequence of black terms in $(v_i \mid 1 \leq i \leq L)$ by $\vb U$ and let $U$ be the set of sequence members of $\vb U$. For a positive integer $i$, if $v_i$ is not black and $M(G[U \cup \{v_i\}]) \cong M(H)$, we color $v_i$ \textit{red}. If $v_i$ is not black and not red, we color $v_i$ \textit{green}.
\end{definition}
\begin{fact}\label{fact:red-green-rules}
    For any integer $i \geq 1$,
    \begin{itemize}
        \item[\namedlabel{item:rule1-red-green-vertex-sequence}{(1)}] the indicator random variable $\one_{\{v_i~\text{is black}\}}$ is a function of $v_i$ and the subsequence of black terms in $(v_1, \dots, v_{i-1})$,
        \item[\namedlabel{item:rule2-red-green-vertex-sequence}{(2)}] the indicator random variables $\one_{\{v_i~\text{is red}\}}$ and $\one_{\{v_i~\text{is green}\}}$ are functions of $\one_{\{v_i~\text{is black}\}}$, $v_i$ and $\vb U$.
    \end{itemize}
\end{fact}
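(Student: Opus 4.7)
The plan is to verify both items directly from the definition of the coloring rule; neither is more than a bookkeeping exercise, but it is worth being explicit about which data each color needs. Since $G$ and $H$ are fixed throughout, all induced subgraphs and isomorphism checks below depend only on the vertices being fed in.

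For \ref{item:rule1-red-green-vertex-sequence}, expand the definition of black. The indicator $\one_{\{v_i~\text{is black}\}}$ is determined by whether either of conditions (i) or (ii) holds for
\[
\hat G = M\bigl(G[\{v_j \mid 1 \le j < i,\ v_j~\text{is black}\} \cup \{v_i\}]\bigr),
\]
and $\hat G$ is a deterministic function of $v_i$ together with the \emph{subsequence} of black terms in $(v_1, \dots, v_{i-1})$, since only the vertex set of that subsequence enters the induced subgraph. Conditions (i) and (ii) themselves depend only on $\hat G$ and the fixed $H$, so the indicator depends only on the claimed inputs. An easy induction on $i$ is implicit here: by the inductive hypothesis, blackness of each $v_j$ with $j < i$ is already known to be determined by $v_j$ and the earlier black terms, so the subsequence of black terms in $(v_1, \dots, v_{i-1})$ is itself well-defined once $v_1, \dots, v_{i-1}$ are given.

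For \ref{item:rule2-red-green-vertex-sequence}, the definition gives that $v_i$ is red iff $v_i$ is not black and $M(G[U \cup \{v_i\}]) \cong M(H)$, where $U$ is the vertex set of $\vb U$. The first clause is exactly the input $\one_{\{v_i~\text{is black}\}}$; the second depends only on $v_i$, on $U$ (read off from $\vb U$), and on the fixed $G$ and $H$. Hence $\one_{\{v_i~\text{is red}\}}$ is a function of $\one_{\{v_i~\text{is black}\}}$, $v_i$ and $\vb U$, and then $\one_{\{v_i~\text{is green}\}} = 1 - \one_{\{v_i~\text{is black}\}} - \one_{\{v_i~\text{is red}\}}$ is a function of the same quantities.

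There is no genuine obstacle in this fact; it is essentially a restatement of the coloring definition. Its role is structural: it records the locality of the coloring — blackness of $v_i$ depends on the past only through earlier black vertices, and red/green only through $\vb U$ — which is what will license the conditioning arguments and the distributional analysis of the counting random variable $X$ later in the section.
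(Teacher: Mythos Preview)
Your argument is correct and is exactly the intended reading of the definition; the paper does not even spell out a proof for this fact, treating it as immediate from the coloring rule. Your explicit unpacking of $\hat G$ for item~(1) and of the red/green rule for item~(2) matches the paper's implicit justification.
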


\begin{claim}\label{claim:isolated-is-black}
    For a positive integer $i$, if $v_i$ is isolated in $G_i$, then $v_i$ is black.
\end{claim}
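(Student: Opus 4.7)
The plan is to show that when $v_i$ is isolated in $G_i$, the graph $\hat G$ used to test the blackness of $v_i$ agrees with the analogous graph from the most recent earlier step that produced a black term (or with the empty graph, if no such step exists). Then the failure of conditions (i) and (ii) is either immediate or is inherited from the earlier determination of blackness.

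I would set $B_i \coloneqq \{v_j \mid 1 \leq j < i,\ v_j~\text{is black}\}$, so that by definition $\hat G = M(G[B_i \cup \{v_i\}])$. Since $v_i$ is isolated in $G_i$ and $B_i \subseteq \{v_1, \dots, v_{i-1}\}$, the vertex $v_i$ has no neighbor in $B_i$; in particular, it is isolated in $G[B_i \cup \{v_i\}]$, and applying $M$ removes it, yielding $\hat G = M(G[B_i])$. Adding or removing the isolated $v_i$ does not change the isolation status of any vertex of $B_i$ inside the induced subgraph, so this reduction is clean.

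Next, I split on whether $B_i$ is empty. If $B_i = \emptyset$, then $\hat G$ is the empty graph. Because $\ell \geq 2$ we have $m(H) \geq 2$, so $M(H) \not\cong \hat G$ and condition (ii) fails; moreover, for every detectable $v' \in V(H)$, \cref{fact:graph-minus-detectable-vertex} guarantees that $H - v'$ has at least one edge, so $M(H - v')$ is nonempty and condition (i) fails. Otherwise, let $j^*$ be the largest index in $\{1, \dots, i-1\}$ with $v_{j^*}$ black. Then $B_i = B_{j^*} \cup \{v_{j^*}\}$, and the graph used at step $j^*$ to check the blackness of $v_{j^*}$ is exactly $M(G[B_{j^*} \cup \{v_{j^*}\}]) = M(G[B_i]) = \hat G$. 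Since $v_{j^*}$ was declared black, neither (i) nor (ii) held at that step, and hence neither holds at step $i$, proving that $v_i$ is black.

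I do not anticipate a real obstacle: the only subtlety is the observation that an isolated vertex vanishes under $M$, which collapses the blackness check at step $i$ onto the blackness check at the preceding black step $j^*$ (or onto the trivial empty-graph case when no black vertex precedes $v_i$).
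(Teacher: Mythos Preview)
Your proof is correct and follows essentially the same approach as the paper: both argue that, since $v_i$ is isolated, the graph $\hat G$ at step $i$ coincides with the test graph at the most recent previous black step (or is empty if none exists), and then inherit the failure of conditions (i) and (ii) accordingly. The only cosmetic difference is that you introduce the notation $B_i$ and the intermediate identity $\hat G = M(G[B_i])$, whereas the paper writes the equality of the two $M(\cdot)$ expressions directly.
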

\begin{proof}
    We first consider the case that $v_{i'}$ is black for some $1 \leq i' < i$ and choose $i'$ maximal with that property. Since $v_i$ is isolated in $G_i$, it is isolated in $G[\{v_j \mid 1 \leq j < i, v_j~\text{is black}\} \cup \{v_i\}]$. Hence,
    \begin{equation}\label{equation:prev-black}
        M(G[\{v_j \mid 1 \leq j < i, v_j~\text{is black}\} \cup \{v_i\}])
        =M(G[\{v_j \mid 1 \leq j < i', v_j~\text{is black}\} \cup \{v_{i'}\}]).
    \end{equation}
    Since $v_{i'}$ was black, it follows that $v_i$ is black. Now, let us consider the case that no term $v_j$ with $1 \leq j < i$ is black. Therefore,
    \begin{equation*}
        M(G[\{v_j \mid 1 \leq j < i, v_j~\text{is black}\} \cup \{v_i\}]) = M(G[\{v_i\}]) = G[\emptyset].
    \end{equation*}
    By $\ell \geq 2 > 0$, we have $G[\emptyset] \not\cong M(H)$ and by $\ell \geq 2$ combined with \cref{fact:graph-minus-detectable-vertex}, any detectable vertex $v \in V(H)$ satisfies $M(H - v) \not\cong G[\emptyset]$. It follows from the definition that $v_i$ is black.
\end{proof}
By \cref{claim:isolated-is-black}, $v_1$ is black. Moreoever, for an integer $i \geq 2$, if the maximal positive integer $i' < i$, such that $v_{i'}$ is black, satisfies $v_{i'} = v_i$, then $v_i$ is black due to \cref{equation:prev-black}. We conclude that for any positive integer $i$, we have $\IP[v_i~\text{is black}] = \IE[\IP[v_i~\text{is black} \mid v_1, \dots, v_{i-1}]] \geq \frac{1}{n}$ and thus, $\IP[\vb U~\text{has $k-2$ terms}] = \IP[L < \infty] = 1$.

Let $(v_{i_1}, \dots, v_{i_r})$ be the subsequence of $(v_1, \dots, v_k)$ consisting of all $v_i$ that are non-isolated in $G_i$. If $\E$ holds, then by $\E_k$, the graph $G_k \cong H$ has $\ell \geq 2 > 0$ edges, which implies $r \geq 1$.

\begin{claim}\label{claim:small-index-is-black}
    Suppose that $\E$ holds and $r \geq 2$. For every positive integer $i < i_{r-1}$, the term $v_i$ is black.
\end{claim}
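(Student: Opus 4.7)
The plan is to argue by contradiction. Suppose $\E$ holds, $r \geq 2$, and that there is some $i < i_{r-1}$ for which $v_i$ is not black. Set $B_i \coloneqq \{v_j \mid j < i,\ v_j~\text{is black}\}$ and $\hat G \coloneqq M(G[B_i \cup \{v_i\}])$. By the definition of black, $\hat G$ must satisfy either (ii) $\hat G \cong M(H)$ or (i) $\hat G \cong M(H - v')$ for some $v' \in V(H)$ detectable in $H$. The overall strategy is to exploit two features of the setup: the distinctness of $v_1,\dots,v_k$ on $\E$, and the strict ordering $i < i_{r-1} < i_r$. Together these force both $v_{i_{r-1}}$ and $v_{i_r}$ to lie outside of $B_i \cup \{v_i\}$ while still being non-isolated in $G_k$.

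Case (ii) should fall out quickly. Since $B_i \cup \{v_i\} \subseteq V(G_k)$, the graph $G[B_i \cup \{v_i\}]$ is an induced subgraph of $G_k \cong H$; case (ii) then gives $|E(G[B_i \cup \{v_i\}])| = |E(M(H))| = \ell = |E(G_k)|$, so $G_k$ has no edges with any endpoint outside $B_i \cup \{v_i\}$. In particular every non-isolated vertex of $G_k$ lies in $B_i \cup \{v_i\}$. But $v_{i_{r-1}}$ is non-isolated in $G_{i_{r-1}} \subseteq G_k$, while distinctness together with $i_{r-1} > i$ gives $v_{i_{r-1}} \notin B_i \cup \{v_i\}$, a contradiction.

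In case (i) the goal is to show that $v'$ is obscure in $H$, contradicting its detectability. Using $\E_k$, fix an isomorphism $\phi \colon G_k \to H$ and set $u_1 \coloneqq \phi(v_{i_r})$ and $u_2 \coloneqq \phi(v_{i_{r-1}})$; both are distinct non-isolated vertices of $H$. To verify $\degree[][H - u_1]{u_2} > 0$, pick any neighbour $v_{j'}$ of $v_{i_{r-1}}$ with $j' < i_{r-1}$ (which exists because $v_{i_{r-1}}$ is non-isolated in $G_{i_{r-1}}$); then $j' < i_{r-1} < i_r$ together with distinctness give $v_{j'} \neq v_{i_r}$, so $u_2$ has a neighbour in $H - u_1$. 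Next, since $i_{r-1}, i_r > i$ and the $v_j$ are distinct, $v_{i_{r-1}}, v_{i_r} \notin B_i \cup \{v_i\}$, so $M(G[B_i \cup \{v_i\}])$ is an induced subgraph of $G_k - v_{i_{r-1}} - v_{i_r}$, whose image under $\phi$ is an induced subgraph of $H - u_1 - u_2$ isomorphic to $M(H - v')$. This matches the definition of obscurity for $v'$, contradicting detectability.

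The one step that needs a little care is case (i): one has to arrange both parts of the obscurity witness simultaneously, namely the neighbour condition on $u_2$ and the induced-subgraph condition inside $H - u_1 - u_2$. Fortunately both reduce to the same two ingredients, the strict ordering $i < i_{r-1} < i_r$ and distinctness of $v_1,\dots,v_k$ on $\E$, so no deeper structural analysis of $H$ or of $v'$ is required.
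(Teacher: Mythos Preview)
Your proof is correct and follows essentially the same approach as the paper's: both arguments hinge on the observation that $\hat G$ embeds as an induced subgraph of $G_k - v_{i_{r-1}} - v_{i_r}$ (using distinctness and $i < i_{r-1} < i_r$), then rule out (ii) by comparing with $M(G_k) \cong M(H)$ and rule out (i) by exhibiting the obscurity witness $(v_{i_r}, v_{i_{r-1}})$. The only differences are cosmetic: you frame it as a contradiction and transfer explicitly to $H$ via an isomorphism $\phi$, whereas the paper argues directly in $G_k$ and then invokes $G_k \cong H$ at the end; for (ii) you count edges while the paper notes $\hat G$ is a subgraph of the proper subgraph $M(G_k - v_{i_r} - v_{i_{r-1}})$ of $M(G_k)$.
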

\begin{proof}
    By $\E_k$, the vertices $v_1, \dots, v_k$ are distinct. Since $i < i_{r-1} < i_r \leq k$,
    \begin{equation*}
        \hat G \coloneqq M(G[\{v_j \mid 1 \leq j < i, v_j~\text{is black}\} \cup \{v_i\}])
    \end{equation*}
    is an induced subgraph of $\bar G \coloneqq M(G_k - v_{i_r} - v_{i_{r-1}})$.
    It follows from $i_r \leq k$ and since $v_{i_r}$ is non-isolated in $G_{i_r}$ that $v_{i_r}$ is non-isolated in $G_k$. Hence, $\bar G$ is a proper subgraph of $M(G_k) \cong M(H)$, where we used $\E_k$. Since $\hat G$ is a subgraph of $\bar G$, we conclude that $\hat G \not \cong M(H)$.
    Moreover, since $i_{r-1} < i_r$ and $v_{i_{r-1}}$ is non-isolated in $G_{i_{r-1}}$, it is non-isolated in $G_k - v_{i_r}$.
    Suppose that $\hat v \in V(G_k)$ satisfies $M(G_k - \hat v) \cong \hat G$. Since $\hat G$ is an induced subgraph of $\bar G$, which is an induced subgraph of $G_k - v_{i_r} - v_{i_{r-1}}$ and since $\degree[][G_k]{v_{i_r}} > 0$ and $\degree[][G_k - v_{i_r}]{v_{i_{r-1}}} > 0$, it follows that $\hat v$ is obscure and therefore not detectable in $G_k$, where we used the definitions of obscurity and detectability. Since $G_k \cong H$ by $\E_k$, this shows that there is no detectable vertex $v'$ in $H$ such that $M(H - v') \cong \hat G$.
    Therefore, $v_i$ is black by definition.
\end{proof}

The previous two claims imply:

\begin{corollary}\label{corollary:characterization-of-non-black-vertices}
    Suppose that $\E$ holds. For a positive integer $i \leq k$, $v_i$ is black, unless $i = i_r$, or $r \geq 2$ and $i = i_{r-1}$.
\end{corollary}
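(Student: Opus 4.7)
The plan is to combine the two preceding claims via a straightforward case analysis on whether $v_i$ is isolated in $G_i$. Recall that the subsequence $(v_{i_1}, \dots, v_{i_r})$ consists of exactly the terms $v_j$ that are non-isolated in $G_j$, and that under $\E$ we already observed $r \geq 1$ (since $G_k \cong H$ has $\ell \geq 2$ edges).

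First I would handle the easy case: if $v_i$ is isolated in $G_i$, then \cref{claim:isolated-is-black} applies immediately and gives that $v_i$ is black, so the conclusion holds regardless of whether $i$ equals one of the listed exceptional indices. Note that in this subcase $i \notin \{i_1, \dots, i_r\}$, so nothing more needs to be checked.

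Next, suppose $v_i$ is non-isolated in $G_i$, so that $i = i_j$ for some $1 \leq j \leq r$. If $j = r$, then $i = i_r$, which is the first listed exception. If $r \geq 2$ and $j = r-1$, then $i = i_{r-1}$, which is the second listed exception. In the only remaining subcase we have $r \geq 2$ and $j \leq r-2$; then $i = i_j < i_{r-1}$, and \cref{claim:small-index-is-black} (whose hypothesis $r \geq 2$ is satisfied) yields that $v_i$ is black.

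Since the case analysis is exhaustive, the corollary follows. I do not anticipate any genuine obstacle here: the statement is essentially a bookkeeping consolidation of \cref{claim:isolated-is-black,claim:small-index-is-black}. The only minor care required is to remember that the symbol $i_{r-1}$ is only meaningful when $r \geq 2$, which is precisely why the second exception in the statement is qualified by $r \geq 2$.
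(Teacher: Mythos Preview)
Your proposal is correct and is exactly the kind of case split the paper has in mind: in the paper the corollary is stated immediately after the two claims with only the sentence ``The previous two claims imply,'' and your argument makes that implication explicit. No differences to report.
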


We define $Y$ and $Z$ to be the number of green and red terms in $(v_i \mid 1 \leq i \leq L)$, respectively, and consider the events $\A_1 \coloneqq \{(Y, Z) = (2, 0)\}$, $\A_2 \coloneqq \{(Y, Z) = (0, 1)\}$.

\begin{claim}\label{claim:e-subset}
    We have
    \begin{align*}
         & \E
        \subseteq \A_1 \cup \A_2,                                                    \\
         & \E \cap \{r \geq 2, v_{i_{r-1}}, v_{i_r}~\text{are detectable in $G_k$}\}
        \subseteq \A_1.
    \end{align*}
\end{claim}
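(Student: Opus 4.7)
The plan is to analyze, on $\E$, the colors of the at-most-two candidates for non-black terms in $(v_1, \ldots, v_k)$ identified by \cref{corollary:characterization-of-non-black-vertices}, namely $v_{i_r}$ and (if $r \geq 2$) $v_{i_{r-1}}$; from these colors one reads off $L$, $U$, and hence $(Y, Z)$.

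First I would establish two preliminary structural facts valid on $\E$. The bound $i_r \leq k-2$ follows from $\E_{k-1}$ and $\E_{k-2}$: if $i_r \in \{k-1, k\}$, then removing $v_{i_r}$ (and, in the $i_r = k-1$ case, $v_{k-1}$ as well) from $G_k$ strictly decreases $|V(M(\cdot))|$, contradicting $M(G_{k-1}) \cong M(H)$ or $M(G_{k-2}) \cong M(H)$. The same vertex-count argument shows that for any non-isolated $v' \in V(H)$ one has $M(H-v') \not\cong M(H)$. Fixing an isomorphism $\phi : G_k \to H$ from $\E_k$, and using that every edge of $G_k$ has its later endpoint in $\{v_{i_1}, \ldots, v_{i_r}\}$ while $v_j$ is isolated in $G_k$ for $j > i_r$, a short edge-bookkeeping argument yields
\[
    M(G_{i_{r-1}}) \cong M(H - \phi(v_{i_r})), \qquad M(G_{i_r}) \cong M(G_k) \cong M(H).
\]
Hence, in the coloring rule, the test graph $\hat G$ for $v_{i_{r-1}}$ is $M(H-\phi(v_{i_r}))$, while that for $v_{i_r}$ equals $M(H)$ if $v_{i_{r-1}}$ is black and $M(H-\phi(v_{i_{r-1}}))$ otherwise.

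Next I case-split on the coloring pattern of $(v_{i_{r-1}}, v_{i_r})$. The pattern ``both black'' is impossible when $r \geq 2$: then $v_{i_r}$'s test graph is $M(H)$, condition~(ii) fires, and $v_{i_r}$ must be non-black. In the two ``exactly one non-black'' patterns (and in the case $r = 1$, where the only candidate is $v_{i_r}$), counting black terms gives $L = k-1$ and $G[U \cup \{v\}] = G_{k-1}$ for the unique non-black $v$; so by $\E_{k-1}$, $v$ is red and $(Y, Z) = (0, 1) \in \A_2$. In the ``both non-black'' pattern one gets $L = k$, and the red test for either non-black vertex reduces to $M(H - \phi(v')) \cong M(H)$ for the other, which fails by the non-isolated observation; so both are green and $(Y, Z) = (2, 0) \in \A_1$. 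This yields $\E \subseteq \A_1 \cup \A_2$.

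For the second inclusion I would additionally assume that $\phi(v_{i_{r-1}})$ and $\phi(v_{i_r})$ are detectable in $H$ and verify directly that we fall in the ``both non-black'' pattern: in the coloring test for $v_{i_{r-1}}$ the choice $v' = \phi(v_{i_r})$ witnesses condition~(i), forcing $v_{i_{r-1}}$ non-black, and similarly $v' = \phi(v_{i_{r-1}})$ then makes $v_{i_r}$ non-black. The previous paragraph gives $(Y, Z) = (2, 0) \in \A_1$. I expect the main obstacle to be the bookkeeping needed to identify $\hat G$ correctly in each subcase---in particular, the fact that when a previous candidate has already been classified as non-black it must be excluded from the prior-black set used at the next candidate---together with the verification, via $\E_{k-2}$, that $i_r \leq k-2$ so that the count defining $L$ lands cleanly at $k-1$ or $k$.
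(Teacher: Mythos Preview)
Your proposal is correct and follows essentially the same approach as the paper's proof: identify, via \cref{corollary:characterization-of-non-black-vertices}, that the only possible non-black terms among $v_1,\dots,v_k$ on $\E$ are $v_{i_{r-1}}$ and $v_{i_r}$, compute their test graphs $\hat G$ (using that all earlier terms are black), and then case-split on how many of them are actually non-black to read off $(Y,Z)$ from the red/green test against $M(G_{k-1})$ or $M(G_k - v_{j})$. The paper packages the case-split as $|I|\in\{1,2\}$ for $I=\{1\le i\le L: v_i\text{ not black}\}$, whereas you phrase it as the coloring pattern of the pair $(v_{i_{r-1}},v_{i_r})$, but the identifications $U\cup\{v_i\}=\{v_1,\dots,v_{k-1}\}$ in the one-non-black case and $U\cup\{v_i\}=\{v_1,\dots,v_k\}\setminus\{v_j\}$ in the two-non-black case, and the use of $\E_{k-1}$ and of ``$M(H-v')\not\cong M(H)$ for non-isolated $v'$'' respectively, are exactly the same.
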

\begin{proof}
    Let us assume that $\E$ holds. In particular, $\E_k$ implies that $v_1, \dots, v_k$ are distinct. By \cref{corollary:characterization-of-non-black-vertices}, there are at least $k-2$ black terms among $v_1, \dots, v_k$. By $\E_{k-2} \cap \E_k$, we have $M(G_{k-2}) \cong M(H) \cong M(G_k)$, which implies that $v_k$, $v_{k-1}$ are isolated in $G_k$. This yields $i_r \leq k-2$ and hence, by \cref{corollary:characterization-of-non-black-vertices}, $v_k, v_{k-1}$ are black. If all terms before $v_{i_r}$ are black (i.e. $r = 1$, or $r \geq 2$ and $v_{i_{r-1}}$ is black), then by $\E_k$,
    \begin{equation*}
        M(G[\{v_j \mid 1 \leq j < {i_r}, v_j~\text{is black}\} \cup \{v_{i_r}\}]) = M(G_k) \cong M(H),
    \end{equation*}
    implying that $v_{i_r}$ is not black. Hence, at least one term among $v_1, \dots, v_{k-2}$ is not black. By \cref{corollary:characterization-of-non-black-vertices}, the only potential non-black terms among $v_1, \dots, v_{k-2}$ are $\{i_s \mid \max\{1, r-1\} \leq s \leq r\}$ and thus, there are one or two such terms. Since $v_{k-1}, v_k$ are black, it follows that $L \in \{k-1, k\}$ and the set $I \coloneqq \{1 \leq i \leq L \mid v_i~\text{is not black}\}$ satisfies
    \begin{equation}\label{equation:size-of-i-given-e}
        \E \subseteq \{\emptyset \neq I \subseteq \{i_s \mid \max\{1, r-1\} \leq s \leq r\}\}
    \end{equation}
    Suppose that $|I| = 2$. Then, $L = k-2 + |I| = k$. For every $i \in I$, we claim that the term $v_i$ is green. Indeed, let $i \in I$. Then, there exists a $j \in I \backslash \{i\}$. By $j \in I$, we have $j \leq L = k$ and $v_j$ is not black and therefore, it is non-isolated in $G_j$ by \cref{claim:isolated-is-black}. Hence, $M(G_k - v_j)$ is a proper subgraph of $M(G_k) \cong M(H)$ by $\E_k$. In particular, $M(G_k - v_j) \not\cong M(H)$.
    However, by $I = \{i\} \sqcup \{j\}$ and $L = k$, we have $M(G[U \cup \{v_i\}]) = M(G_k - v_{j}) \not\cong M(H)$. Hence, $v_i$ is not red by definition.
    Since $v_i$ is not black by $i \in I$, it follows that $v_i$ is green.
    Thus,
    \begin{equation}\label{equation:a1-given-e-and-i}
        \E \cap \{|I| = 2\} \subseteq \{(Y, Z) = (2, 0)\} = \A_1.
    \end{equation}
    Assume $|I| = 1$. Then, $L = k-2 + |I| = k-1$ and $I = \{i\}$ with $1 \leq i \leq L = k-1$. By $\E_{k-1}$,
    \begin{equation*}
        M(G[U \cup \{v_i\}]) = M(G_{k-1}) \cong M(H).
    \end{equation*}
    It follows from the definition that $v_i$ is red, where we used that $v_i$ is not black by $i \in I$.
    Thus,
    \begin{equation}\label{equation:a2-given-e-and-i}
        \E \cap \{|I| = 1\} \subseteq \{(Y, Z) = (0, 1)\} = \A_2.
    \end{equation}
    The first part of the claim follows from \cref{equation:size-of-i-given-e,equation:a1-given-e-and-i,equation:a2-given-e-and-i}.

    If $r \geq 2$ and $v_{i_r}$ is detectable in $G_k$, then $v_{i_{r-1}}$ is not black since
    \begin{equation*}
        M(G[\{v_j \mid 1 \leq j < {i_{r-1}}, v_j~\text{is black}\} \cup \{v_{i_{r-1}}\}]) = M(G_k - v_{i_r})
    \end{equation*}
    by $\E$ and \cref{equation:size-of-i-given-e}. Therefore, if, in addition, $v_{i_{r-1}}$ is detectable in $G_k$, then $v_{i_r}$ is not black since
    \begin{equation*}
        M(G[\{v_j \mid 1 \leq j < {i_{r}}, v_j~\text{is black}\} \cup \{v_{i_{r}}\}]) = M(G_k - v_{i_{r-1}})
    \end{equation*}
    by $\E$ and \cref{equation:size-of-i-given-e}. It follows by \cref{equation:size-of-i-given-e} that
    \begin{equation}\label{equation:size-of-i-given-bright-and-e}
        \E \cap \{r \geq 2, v_{i_{r-1}}, v_{i_r}~\text{are detectable in $G_k$}\} \subseteq \{r \geq 2, I = \{i_{r-1}, i_r\}\} \subseteq \{|I| = 2\}.
    \end{equation}
    The second part of the claim follows from \cref{equation:size-of-i-given-bright-and-e,equation:a1-given-e-and-i}.
\end{proof}

Moreover, we define $\B \coloneqq \{\text{two consecutive terms in $(v_i \mid 1 \leq i \leq L)$ are not black}\}$.

\begin{claim}\label{claim:properties-of-e}
    We have
    \begin{align*}
        \IP[\A_1 \mid \E] & \geq \brightness{H}, \\
        \IP[\B \mid \E]   & \leq 3\alpha.
    \end{align*}
\end{claim}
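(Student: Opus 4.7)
The plan is to first identify the conditional distribution of $(G_k, v_1, \dots, v_k)$ given $\E$. By the symmetry argument preceding \cref{claim:isolated-is-black}, $\E_k$ already makes $(v_1, \dots, v_k)$ a uniformly random labeling of $\{v_1, \dots, v_k\}$, and given $\E_k$ the event $\E$ is equivalent to requiring $v_{k-1}, v_k$ to be isolated in $G_k$. Hence, conditional on $\E$, the labeled graph $(G_k, v_1, \dots, v_k)$ has the distribution of $(H, u_1, \dots, u_k)$, where $u_1, \dots, u_k$ is a uniformly random labeling of $V(H)$ subject to $u_{k-1}, u_k$ being isolated vertices of $H$.

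For the first inequality, the starting point is that a labeling is bright precisely when $r \geq 2$ and $v_{i_{r-1}}, v_{i_r}$ are both detectable in $G_k$. By \cref{claim:e-subset}, the intersection of this event with $\E$ is contained in $\A_1$, so $\IP[\A_1 \mid \E] \geq \IP[\text{bright} \mid \E]$. The crucial observation is that brightness depends only on the permutation induced on the non-isolated vertices: an isolated vertex has no neighbors, so neither the indices $i$ with $\degree[\{v_1, \dots, v_i\}]{v_i} > 0$ nor the vertex sitting at any such index changes when two isolated vertices swap positions. Both under a uniform labeling of $V(H)$ and under the conditional distribution in which the two last positions are pinned to host isolated vertices, the induced order on non-isolated vertices is uniform. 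Hence $\IP[\text{bright} \mid \E] = \brightness{H}$, giving the first bound.

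For the second inequality, I would use \cref{corollary:characterization-of-non-black-vertices} to restrict, under $\E$, the non-black positions in $\{1, \dots, L\}$ to a subset of $\{i_{r-1}, i_r\}$. Two consecutive non-black terms therefore force $r \geq 2$ together with $i_{r-1} + 1 = i_r$. Let $p_1 < \dots < p_{m(H)}$ be the positions in the sequence occupied by non-isolated vertices of $G_k$. Under $\E$, the last non-isolated vertex has all its neighbors at earlier non-isolated positions, so $i_r = p_{m(H)}$; and since every index in $\{i_1, \dots, i_r\}$ sits on a non-isolated vertex, $i_{r-1} \leq p_{m(H)-1}$. Thus $i_{r-1} + 1 = i_r$ forces $p_{m(H)-1} + 1 = p_{m(H)}$. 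Under $\E$, the set $\{p_1, \dots, p_{m(H)}\}$ is a uniformly random $m(H)$-subset of $\{1, \dots, k-2\}$, and a direct count shows that its two largest elements are consecutive with probability $m(H)/(k-2)$. Using $m(H) \leq \alpha k$ and $k \geq 3$, this is at most $\alpha k/(k-2) \leq 3\alpha$.

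The only step that is not immediate is the observation that brightness is invariant under swapping positions of isolated vertices; once that is in hand, both inequalities reduce to short combinatorial calculations about the distribution of non-isolated positions under $\E$.
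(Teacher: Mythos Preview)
Your proof is correct and follows essentially the same approach as the paper. The paper conditions on $\D(W_0) = \E \cap \{\{v_1,\dots,v_k\}=W_0\}$ and then averages over $W_0$, while you condition on $\E$ directly; both yield the same uniform-with-last-two-isolated description of $(v_1,\dots,v_k)$. For $\IP[\B\mid\E]$ the paper routes through $\E\subseteq\A_1\cup\A_2$ and the event $\F=\{\text{last two non-isolated terms consecutive}\}$, whereas you go straight from \cref{corollary:characterization-of-non-black-vertices} to $p_{m(H)-1}+1=p_{m(H)}$; the two arguments are the same computation $\binom{k-3}{m(H)-1}/\binom{k-2}{m(H)}=m(H)/(k-2)$ in slightly different packaging.
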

\begin{proof}
    By definition, $\B \cap \A_2 = \emptyset$.
    If $\A_1 \cap \E$ holds, then by \cref{corollary:characterization-of-non-black-vertices}, $r \geq 2$ and the non-black vertices among $v_1, \dots, v_k$ are $v_{i_{r-1}}, v_{i_r}$.
    If $r \geq 2$ and $v_{i_{r-1}}, v_{i_r}$ are consecutive, then they are indeed the last two non-isolated terms in $G_k$ among $v_1, \dots, v_k$ and thus, the event
    \begin{equation*}
        \F \coloneqq \{\text{the last two non-isolated terms in $G_k$ among $v_1, \dots, v_k$ are consecutive}\}
    \end{equation*}
    follows.
    Therefore,
    \begin{equation*}
        \B \cap \A_1 \cap \E \subseteq \{r \geq 2, i_r = i_{r-1} + 1\} \subseteq \F.
    \end{equation*}
    Using $\E \subseteq \A_1 \cup \A_2$ by \cref{claim:e-subset} and $\B \cap \A_2 = \emptyset$, we conclude that
    \begin{equation*}
        \B \cap \E \subseteq (\B \cap \A_1 \cap \E) \cup (\B \cap \A_2) \subseteq \F.
    \end{equation*}
    Let $W_0 \in \binom{V(G)}{k}$ satisfy $G[W_0] \cong H$ and define
    \begin{equation*}
        \D \coloneqq \D(W_0) \coloneqq \E \cap \{\{v_1, \dots, v_k\} = W_0\} = \{\{v_1, \dots, v_k\} = W_0, \text{and $v_{k-1}, v_k$ are isolated in $G[W_0]$}\}.
    \end{equation*}
    Since we assumed $\IP[\E] > 0$ and since $\IP[\D]$ does not depend on the choice of $W_0$, we also have $\IP[\D] > 0$. By symmetry, we obtain:
    \begin{fact}\label{fact:conditional-distribution-of-v1-to-vk}
        The distribution of $(v_1, \dots, v_k)$ conditioned on $\D$ is uniform on the set
        \begin{equation*}
            \left\{\text{$(w_1, \dots, w_k)$ labeling of $W_0$} \mid \text{$w_{k-1}, w_k$ isolated in $G[W_0]$}\right\}.
        \end{equation*}
    \end{fact}
    We have $\B \cap \D \subseteq \B \cap \E \subseteq \F$ by $\D \subseteq \E$. By \cref{fact:conditional-distribution-of-v1-to-vk} and since $k \geq m(H) + 2$ by $\IP[\E] > 0$, we obtain
    \begin{equation}\label{equation:p-of-f-given-d}
        \IP[\B \mid \D] \leq \IP[\F \mid \D] = \frac{\binom{k-3}{m(H)-1}}{\binom{k-2}{m(H)}} = \frac{m(H)}{k-2} \leq \frac{3m(H)}{k} \leq 3\alpha,
    \end{equation}
    where we used $k \geq 3$ and $m(H) \leq \alpha k$.
    From $\D \subseteq \E$ and \cref{claim:e-subset}, we obtain
    \begin{equation}\label{equation:d-and-bright-imply-a1}
        \D \cap \{r \geq 2, v_{i_{r-1}}, v_{i_r}~\text{are detectable in $G_k$}\} \subseteq \A_1.
    \end{equation}
    By \cref{fact:conditional-distribution-of-v1-to-vk}, the relative order of the non-isolated vertices in $G_k = G[W_0] \cong H$ is uniformly random, conditioned on $\D$. Therefore, by the definitions of $\brightness{H}$, $\D$, and $v_{i_1}, \dots, v_{i_r}$,
    \begin{equation}\label{equation:p-of-a1-given-d}
        \brightness{H} = \IP[r \geq 2, v_{i_{r-1}}, v_{i_r}~\text{are detectable in $G_k$} \mid \D] \leq \IP[\A_1 \mid \D],
    \end{equation}
    where we used \cref{equation:d-and-bright-imply-a1}. Since the events $\D(W_0)$ over all $W_0 \in \binom{V(G)}{k}$ with $G[W_0] \cong H$ partition $\E$, \cref{equation:p-of-f-given-d,equation:p-of-a1-given-d} also hold conditioned on $\E$ instead of on $\D$.
\end{proof}

\begin{claim}\label{claim:bound-for-colors-of-red-green-vertex-sequence}
    There exists a real number $\lambda \in [0, 1]$ such that
    \begin{align*}
        \IP[\A_1 \cap \B^c]
         & \leq \lambda \cdot \frac{2}{e^2},   \\
        \IP[\A_2]
         & \leq (1-\lambda) \cdot \frac{1}{e}.
    \end{align*}
\end{claim}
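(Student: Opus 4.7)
The plan is to condition on the realization of the black subsequence $\vb U$, express the conditional probabilities of $\A_1 \cap \B^c$ and $\A_2$ in closed form using per-gap ``green'' and ``red'' rates, and then combine them via an elementary two-variable inequality.

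To set up the decomposition, fix a realization $\vb U = (u_1, \dots, u_{k-2})$ and let $T_1 < \dots < T_{k-2} = L$ be the times of the black vertices, with gap sizes $N_j := T_{j+1} - T_j - 1$ for $j = 0, \dots, k-3$ (where $T_0 := 0$). Because $v_1, v_2, \dots$ are iid uniform on $V(G)$ and blackness of $v_i$ depends only on $v_i$ and the current black history by \cref{fact:red-green-rules}, an elementary computation shows that, conditional on $\vb U = U$, the $N_j$ are independent with $N_j \sim \mathrm{Geom}(1 - q_j)$, where $q_j := |B_j|/n$ and $B_j \subseteq V(G)$ is the set of vertices that would be colored non-black if appended to $(u_1, \dots, u_j)$. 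Moreover, given the gap sizes, the non-black samples in gap $j$ are iid uniform on $B_j$, and each is independently red with probability $r_j := |R(U) \cap B_j|/|B_j|$ (with $R(U) := \{v \in V(G) : M(G[U \cup \{v\}]) \cong M(H)\}$) or green otherwise.

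Writing $\tilde g_j := q_j(1 - r_j)$, $\tilde r_j := q_j r_j$, $P := \prod_j (1 - q_j)$, $G := \sum_j \tilde g_j$, and $R := \sum_j \tilde r_j$, I would then count configurations directly: $\A_1 \cap \B^c$ corresponds to exactly two gaps having $N_j = 1$ with green non-blacks and the rest $N_j = 0$, while $\A_2$ corresponds to exactly one gap with $N_j = 1$ with red non-black. By the independence just established,
\[
    \IP[\A_1 \cap \B^c \mid \vb U] = P \sum_{j_1 < j_2} \tilde g_{j_1} \tilde g_{j_2} \leq \frac{P G^2}{2}, \qquad \IP[\A_2 \mid \vb U] = P R.
\]
Using $P \leq e^{-(G + R)}$ (note $G + R = \sum_j q_j$), the task reduces to proving the pointwise inequality $e^{G+R} \geq \tfrac{e^2 G^2}{4} + e R$ for all $G, R \geq 0$. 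Fixing $G$, the function $R \mapsto e^{G+R} - \tfrac{e^2 G^2}{4} - eR$ is convex and minimized at $R^\ast := \max(0, 1-G)$; the two resulting subcases reduce to the standard bounds $\max_{x \geq 0} x^2 e^{-x} = 4/e^2$ and $\max_{x \geq 0} x e^{-x} = 1/e$.

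These steps combine to give the pointwise bound $\tfrac{e^2}{2} \IP[\A_1 \cap \B^c \mid \vb U] + e \IP[\A_2 \mid \vb U] \leq 1$. Taking expectations over $\vb U$ preserves the inequality, and setting $\lambda := \tfrac{e^2}{2} \IP[\A_1 \cap \B^c] \in [0, 1]$ (the range follows from the same inequality) yields the two required bounds simultaneously. The principal obstacle is carefully verifying the conditional independence in the first step: the blackness rule depends on the evolving black history while the red/green classification depends on the terminal $\vb U$, so one must check that conditioning on $\vb U$ makes the gap-level sample sequences independent and the post-hoc red/green labels conditionally independent with the stated Bernoulli probabilities. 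Once this factorization is nailed down, the remainder is a clean moment computation combined with the two-variable inequality.
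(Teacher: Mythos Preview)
Your proposal is correct and follows essentially the same route as the paper: condition on the black subsequence $\vb U$, use the i.i.d.\ structure together with \cref{fact:red-green-rules} to factor the gap lengths and the red/green labels, reduce to the two-variable inequality $e^{y+z} \geq \tfrac{e^2 y^2}{4} + ez$ (which is exactly the content of \cref{lemma:bound-lin-comb-of-poly-times-negative-ext}), and average over $\vb U$. The only cosmetic differences are that you include the trivial gap before $u_1$ (harmless since $v_1$ is always black, so $q_0=0$), and that you extract a single $\lambda = \tfrac{e^2}{2}\,\IP[\A_1\cap\B^c]$ after averaging rather than producing a $\lambda_{\vb u}$ per realization and then averaging; these are equivalent, and your version is arguably slightly cleaner.
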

\begin{proof}
    Let $t$ be a nonnegative integer and $\vb u = (u_1, \dots, u_t) \in V(G)^t$ be a sequence of length $t$ such that $\IP[\vb U = \vb u] > 0$. Since $\IP[\vb U~\text{has $k-2$ terms}] = 1$, it follows that $t = k-2$. Recall that $u_1 = v_1$ since $v_1$ is black. For a positive integer $i$ with $i \leq k-3$, let $\vb V_i$ be the subsequence of $\vb V$ of terms after $u_i$ and before $u_{i+1}$. Define $S_i$ as the length of $\vb V_i$ and let $P_i$ be the number of green terms in $\vb V_i$.

    Since $v_1, v_2, \dots$ are i.i.d. uniformly random from $V(G)$, applying \ref{item:rule1-red-green-vertex-sequence} in \cref{fact:red-green-rules} iteratively for $i = 1, 2, \dots$ implies that the conditional distribution of $(S_1, \dots, S_{k-3} \mid \vb U = \vb u)$ is such that $S_1, \dots, S_{k-3}$ are independent conditioned on $\vb U = \vb u$ and that, for $1 \leq i \leq k - 3$, the conditional distribution of $(S_i \mid \vb U = \vb u)$ is $\Geom(b_i) \in \{0, 1, 2, \dots\}$, where $b_i$ is the conditional probability, given $\vb U = \vb u$, that the term following $u_i$ is black. Therefore, since $v_1, v_2, \dots$ are i.i.d. uniformly random from $V(G)$, applying \ref{item:rule2-red-green-vertex-sequence} in \cref{fact:red-green-rules} repeatedly implies that the conditional distribution of $(S_1, \dots, S_{k-3}, P_1, \dots, P_{k-3} \mid \vb U = \vb u)$ is such that $(S_1, P_1), \dots, (S_{k-3}, P_{k-3})$ are independent conditioned on $\vb U = \vb u$ and that, for $1 \leq i \leq k-3$, the conditional distribution of $(P_i \mid \vb U = \vb u, S_i)$ is $\Binom(S_i, \frac{p_i}{p_i+q_i})$ (with $\frac{0}{0} \coloneqq 0$), where $p_i$ and $q_i$ are the conditional probabilities, given $\vb U = \vb u$, that the term following $u_i$ is green and red, respectively.

    By $u_1 = v_1$, the event $\A_1 \cap \B^c$ equals the event that for exactly two $i \in \{1, 2, \dots, k-3\}$, we have $(S_i, P_i) = (1, 1)$ and otherwise, $(S_i, P_i) = (0, 0)$. Similarly, $\A_2$ is the event that for exactly one $i \in \{1, 2, \dots, k-3\}$, we have $(S_i, P_i) = (1, 0)$ and otherwise, $(S_i, P_i) = (0, 0)$.
    Therefore,
    \begin{align*}
        \IP[\A_1 \cap \B^c \mid \vb U = \vb u]
         & = \sum_{1 \leq i_1 < i_2 \leq k-3} p_{i_1} p_{i_2} \prod_{1 \leq j \leq k-3} (1 - p_j - q_j) \leq \sum_{1 \leq i_1 < i_2 \leq k-3} p_{i_1} p_{i_2} e^{- p_j - q_j} \leq \frac{C^2}{2} e^{-C-D}, \\
        \IP[\A_2 \mid \vb U = \vb u]
         & = \sum_{1 \leq i \leq k-3} q_i \prod_{1 \leq j \leq k-3} (1 - p_j - q_j) \leq \sum_{1 \leq i \leq k-3} q_i \prod_{1 \leq j \leq k-3} e^{- p_j - q_j} = D e^{-C-D},
    \end{align*}
    where $C \coloneqq \sum_{1 \leq i \leq k-3} p_i$ and $D \coloneqq \sum_{1 \leq i \leq k-3} q_i$. By \cref{lemma:bound-lin-comb-of-poly-times-negative-ext}, there is a real number $\lambda_{\vb u} \in [0, 1]$ with
    \begin{align*}
        \IP[\A_1 \cap \B^c \mid \vb U = \vb u]
         & \leq \lambda_{\vb u} \cdot \frac{2}{e^2},     \\
        \IP[\A_2 \mid \vb U = \vb u]
         & \leq (1 - \lambda_{\vb u}) \cdot \frac{1}{e}.
    \end{align*}
    Let $\sum_{\vb u}$ denote the sum over $\vb u$ with $\IP[\vb U = \vb u] > 0$. Hence, $\lambda \coloneqq \sum_{\vb u} \lambda_{\vb u} \IP[\vb U = \vb u] \in [0, 1]$ satisfies
    \begin{align*}
        \IP[\A_1 \cap \B^c]
         & = \sum_{\vb u} \IP[\vb U = \vb u] \cdot \IP[\A_1 \cap \B^c \mid \vb U = \vb u]
        \leq \sum_{\vb u} \lambda_{\vb u} \IP[\vb U = \vb u] \cdot \frac{2}{e^2} = \lambda \cdot \frac{2}{e^2}, \\
        \IP[\A_2]
         & = \sum_{\vb u} \IP[\vb U = \vb u] \cdot \IP[\A_2 \mid \vb U = \vb u]
        \leq \sum_{\vb u} (1-\lambda_{\vb u}) \IP[\vb U = \vb u] \cdot \frac{1}{e} = (1-\lambda) \cdot \frac{1}{e}.\qedhere
    \end{align*}
\end{proof}

Let $\lambda \in [0, 1]$ satisfy the conclusion of \cref{claim:bound-for-colors-of-red-green-vertex-sequence}. By \cref{claim:bound-for-colors-of-red-green-vertex-sequence,claim:properties-of-e}, we have
\begin{align}\label{equation:p-of-b-given-e}
    \IP[\A_1 \cap \E]
     & \leq \IP[\A_1 \cap \B^c \cap \E] + \IP[\B \cap E] \leq \IP[\A_1 \cap \B^c] + \IP[\B \mid \E] \leq \lambda \cdot \frac{2}{e^2} + 3\alpha, \\
    \IP[\A_2 \cap \E]
     & \leq \IP[\A_2] \leq (1-\lambda) \cdot \frac{1}{e}.
\end{align}
We denote $p \coloneqq \IP[\A_1 \mid \E]$. Since $\E \subseteq \A_1 \sqcup \A_2$ by \cref{claim:e-subset}, $\IP[\A_2 \mid \E] = 1 - \IP[\A_1 \mid \E] = 1 - p$. In the following, we define $\frac{x}{0} = +\infty$ for $x \in \IR$. Since at least one of $p$ and $1-p$ is non-zero, it follows that
\begin{equation*}
    \IP[\E] = \min\left\{\frac{\IP[\A_1 \cap \E]}{p}, \frac{\IP[\A_2 \cap \E]}{1-p}\right\}
    \leq \min\left\{\frac{\lambda \cdot \frac{2}{e^2} + 3\alpha}{p}, \frac{(1-\lambda) \cdot \frac{1}{e}}{1-p}\right\}.
\end{equation*}
Since the real number $\lambda$ maximizing the right-hand side satisfies $\lambda = \frac{ep - 3e^2(1-p) \alpha}{2+(e-2)p}$ (not necessarily in $[0, 1]$), we conclude that
\begin{align*}
    \IP[\E] \leq \frac{2 + 3e^2 \alpha}{2 + (e-2)p} \cdot \frac{1}{e}
    \leq \frac{2 + 3e^2 \alpha}{2 + (e-2)\brightness{H}} \cdot \frac{1}{e},
\end{align*}
where we used $p = \IP[\A_1 \mid \E] \geq \brightness{H}$ by \cref{claim:properties-of-e}.
Hence, by \cref{equation:induced-density-by-p-of-e},
\begin{equation*}
    \indd{H}{G} \leq \IP[\E] + 2\alpha + o_{n \mid k}(1) \leq \frac{2 + 3e^2 \alpha}{2 + (e-2)\brightness{H}} \cdot \frac{1}{e} + 2\alpha + o_{n \mid k}(1).
\end{equation*}
Since this bound also holds in the case $\IP[\E] = 0$, the desired conclusion follows by taking $n \to \infty$.

\section*{Acknowledgments}

I would like to thank Lisa Sauermann for introducing me to the research topic and her guidance throughout this project.

\printbibliography

\begin{appendices}

    \section{General anti-concentration inequalities}\label{section:straightforward-bounds}

    \begin{lemma}\label{lemma:poisson-bound-for-bin-distribution}
        Let $\hat s$ be a fixed positive integer. For positive integers $s$ and $k$ and a real number $p \in [0, 1]$, we have
        \begin{equation*}
            \binom{k}{s} p^s(1-p)^{k-s} \leq
            \begin{cases}
                \binom{k}{s} \left(\frac{s}{k}\right)^s \left(\frac{k-s}{k}\right)^{k-s} & \text{for \quad $1 \leq s \leq k -1$},            \\
                \frac{{\hat s}^{\hat s}}{{\hat s}!e^{\hat s}} + o_k(1)                   & \text{for \quad $\hat s \leq s \leq k - \hat s$}.
            \end{cases}
        \end{equation*}
    \end{lemma}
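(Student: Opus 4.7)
The first bound is a standard calculus argument. For fixed $s, k$ with $1 \le s \le k-1$, the function $p \mapsto p^s(1-p)^{k-s}$ vanishes at $p \in \{0,1\}$ and is differentiable on $(0,1)$; setting the derivative to zero yields the unique interior critical point $p = s/k$, which must be the maximum. Multiplying by $\binom{k}{s}$ gives the first inequality.

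For the second bound, I would first apply the first bound (valid since $1 \le \hat s \le s \le k - \hat s \le k - 1$) to reduce the problem to showing
\[
g(s, k) := \binom{k}{s}\left(\frac{s}{k}\right)^s\left(\frac{k-s}{k}\right)^{k-s} \leq \frac{\hat s^{\hat s}}{\hat s!\, e^{\hat s}} + o_k(1)
\]
uniformly in $s$ with $\hat s \le s \le k - \hat s$. Since $g(s,k) = g(k-s,k)$, it suffices to treat $\hat s \le s \le k/2$, and I would split this range into two regimes. For \emph{fixed} $s_0 \ge \hat s$, the direct expansion
\[
g(s_0, k) = \frac{k(k-1)\cdots(k-s_0+1)}{s_0!} \cdot \frac{s_0^{s_0}}{k^{s_0}} \cdot \left(1-\frac{s_0}{k}\right)^{k-s_0} \xrightarrow{k\to\infty} \frac{s_0^{s_0}}{s_0!\, e^{s_0}},
\]
and the monotonicity of the Poisson point probability in $s$ (Lemma~\ref{lemma:poisson-point-probability-decreasing}) bounds this limit by $\hat s^{\hat s}/(\hat s!\, e^{\hat s})$. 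For $s \to \infty$ with $s \le k/2$, Stirling's approximation yields $g(s,k) \le (1+o_k(1))/\sqrt{2\pi s(1-s/k)} \le (1+o_k(1))/\sqrt{\pi s} = o_k(1)$, using $1 - s/k \ge 1/2$.

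To combine these into a genuinely uniform $o_k(1)$ bound, I would argue by a subsequence-plus-contradiction argument: any sequence $s_k \in [\hat s, k/2]$ with $\limsup_k g(s_k, k) > \hat s^{\hat s}/(\hat s!\, e^{\hat s})$ must have either a bounded subsequence (along which $s_k$ is eventually constant at some $s_0 \ge \hat s$, contradicting the first regime via Lemma~\ref{lemma:poisson-point-probability-decreasing}) or a subsequence with $s_k \to \infty$ (contradicting the Stirling estimate). The main technical subtlety is that a single Stirling bound of the form $g(s,k) \le (1+O(1/k))/\sqrt{2\pi s(1-s/k)}$ is \emph{not} sharp enough to recover $\hat s^{\hat s}/(\hat s!\, e^{\hat s})$ at $s = \hat s$, since Stirling overshoots the target by a factor $e^{\Theta(1/\hat s)}$; this is exactly why the bounded-$s$ regime must be handled by the exact limit together with Lemma~\ref{lemma:poisson-point-probability-decreasing}, rather than by Stirling alone.
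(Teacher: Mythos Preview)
Your proof is correct. The first part matches the paper exactly. For the second part, you and the paper take genuinely different routes to the same uniform bound.

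The paper proves directly that $g(s,k)$ is \emph{monotonically decreasing in $s$} for $1 \leq s \leq \lfloor k/2 \rfloor$, via the identity
\[
\frac{g(s,k)}{g(s+1,k)} = \frac{\bigl(1+\tfrac{1}{k-s-1}\bigr)^{k-s-1}}{\bigl(1+\tfrac{1}{s}\bigr)^{s}} \geq 1,
\]
which follows from the fact that $n \mapsto (1+1/n)^n$ is increasing (Bernoulli's inequality). This reduces the entire range $\hat s \leq s \leq k/2$ to the single boundary value $s = \hat s$, for which the limit $g(\hat s,k) \to \hat s^{\hat s}/(\hat s!\,e^{\hat s})$ is computed exactly as you do. You instead split into bounded and unbounded $s$, handle bounded $s$ by the exact limit together with Lemma~\ref{lemma:poisson-point-probability-decreasing}, handle unbounded $s$ by Stirling, and glue the two regimes with a subsequence/compactness argument. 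This is valid and has the small advantage of reusing Lemma~\ref{lemma:poisson-point-probability-decreasing} rather than establishing a separate finite-$k$ monotonicity fact; the paper's route is shorter and gives the uniformity in $s$ for free, without any soft-analysis gluing. One minor notational point: your Stirling error term is really $1+O(1/s)$ (the dominant error comes from $s!$) rather than $1+o_k(1)$ in the paper's sense, but since you only invoke it along subsequences with $s \to \infty$ this does not affect the argument.
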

    \begin{proof}
        Consider a positive integer $k$.
        For the first part, suppose that $s$ is a positive integer with $1 \leq s \leq k-1$. Let us find a real number $q \in [0, 1]$ maximizing
        \begin{equation*}
            f_s(q) \coloneqq \binom{k}{s} q^s(1-q)^{k-s}.
        \end{equation*}
        We have $f_s(q) \geq 0$ with equality for $q \in \{0, 1\}$. For $q \in (0, 1)$, we have $\frac{f_s'(q)}{f_s(q)} = \frac{s}{q} - \frac{k-s}{1-q}$. Thus, $f_s'(q) = 0$ if and only if $(1-q)s = q(k-s)$, which is equivalent to $q = \frac{s}{k}$. So, $f_s$ attains its unique maximum at $q = \frac{s}{k}$. Hence,
        \begin{equation*}
            f_s(p) \leq f_s\left(\frac{s}{k}\right) = \binom{k}{s} \left(\frac{s}{k}\right)^s \left(\frac{k-s}{k}\right)^{k-s}.
        \end{equation*}
        We will now prove the second part. So, assume that $k \geq 2\hat s \geq \hat s + 1$.
        The previous equation implies
        \begin{equation*}
            f_{\hat s}\left(\frac{\hat s}{k}\right)
            \leq (1 + o_k(1)) \cdot \frac{k^{\hat s}}{{\hat s}!} \cdot \frac{{\hat s}^{\hat s}}{k^{\hat s}} \left(1 - \frac{{\hat s}}{k}\right)^k
            \leq (1 + o_k(1)) \cdot \frac{{\hat s}^{\hat s}}{{\hat s}!e^{\hat s}}
            \leq \frac{{\hat s}^{\hat s}}{{\hat s}!e^{\hat s}} + o_k(1),
        \end{equation*}
        where we used that $\hat s$ is fixed.
        By $f_s(\frac{s}{k}) = f_{k-s}(\frac{k-s}{k})$, it is enough to prove that $f_s(\frac{s}{k})$ is increasing for $s = 1, 2, \dots, \lfloor \frac{k}{2} \rfloor$.
        For every positive integer $s$, Bernoulli's inequality with $-\frac{1}{(s+1)^2} > -1$ implies
        \begin{equation*}
            \frac{(1 + \frac{1}{s+1})^{s+1}}{(1 + \frac{1}{s})^s} = \frac{s+1}{s} \left(1 - \frac{1}{(s+1)^2}\right)^{s+1} \geq \frac{s+1}{s} \left(1-\frac{s+1}{(s+1)^2}\right) = 1.
        \end{equation*}
        For any positive integer $s$ with $s + 1 \leq \frac{k}{2}$, we have $s < k - s - 1$ and thus,
        \begin{equation*}
            \frac{f_s(\frac{s}{k})}{f_{s+1}(\frac{s+1}{k})} = \frac{\frac{1}{k-s}s^s (k-s)^{k-s}}{\frac{1}{s+1}(s+1)^{s+1}(k-s-1)^{k-s-1}} = \frac{(1+\frac{1}{k-s-1})^{k-s-1}}{(1+\frac{1}{s})^s} \geq 1.\qedhere
        \end{equation*}
    \end{proof}

    \begin{lemma}\label{lemma:poisson-bound-for-hypergeometric-distribution}
        Let $\hat s$ be a fixed positive integer. Let $Z$ be a random variable with a hypergeometric distribution with sample size $k$ in a population of size $n$ containing $r$ successes. For $1 \leq s \leq k-1$, we have
        \begin{equation*}
            \IP[Z = s] = \binom{k}{s} \left(\frac{r}{n}\right)^s \left(\frac{n-r}{n}\right)^{k-s} + o_{n \mid k}(1).
        \end{equation*}
        Thus,
        \begin{equation*}
            \IP[Z = s] \leq
            \begin{cases}
                \binom{k}{s} \left(\frac{s}{k}\right)^s \left(\frac{k-s}{k}\right)^{k-s} + o_{n \mid k}(1) & \text{for \quad $1 \leq s \leq k -1$},            \\
                \frac{{\hat s}^{\hat s}}{{\hat s}!e^{\hat s}} + o_k(1) + o_{n \mid k}(1)                   & \text{for \quad $\hat s \leq s \leq k - \hat s$}.
            \end{cases}
        \end{equation*}
    \end{lemma}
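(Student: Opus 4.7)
The plan is to first establish the identity
\[
    \IP[Z=s] = \binom{k}{s} \left(\frac{r}{n}\right)^s \left(\frac{n-r}{n}\right)^{k-s} + o_{n \mid k}(1)
\]
by direct computation, and then derive the two upper bounds by invoking \cref{lemma:poisson-bound-for-bin-distribution} with $p \coloneqq r/n$.

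To prove the identity, I would start from the hypergeometric mass function
\[
    \IP[Z=s] = \frac{\binom{r}{s}\binom{n-r}{k-s}}{\binom{n}{k}} = \binom{k}{s} \cdot \frac{(r)_s \, (n-r)_{k-s}}{(n)_k},
\]
where $(x)_j = x(x-1)\cdots(x-j+1)$ denotes the falling factorial. Since $k$ is fixed, each of these falling factorials is a product of a fixed number of factors differing by at most $k-1$, so
\[
    (r)_s = r^s + O_k(r^{s-1}), \qquad (n-r)_{k-s} = (n-r)^{k-s} + O_k((n-r)^{k-s-1}), \qquad (n)_k = n^k + O_k(n^{k-1}).
\]
Combining these and using $r, n-r \leq n$, I would conclude that
\[
    \frac{(r)_s\,(n-r)_{k-s}}{(n)_k} = \left(\frac{r}{n}\right)^s \left(\frac{n-r}{n}\right)^{k-s} \cdot \bigl(1 + o_{n \mid k}(1)\bigr).
\]
Since the main term $\binom{k}{s} (r/n)^s ((n-r)/n)^{k-s}$ is bounded above by $1$ (as a probability mass in a binomial distribution), the multiplicative error $1 + o_{n \mid k}(1)$ translates to an additive $o_{n \mid k}(1)$ error, yielding the stated identity.

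For the two upper bounds, the identity reduces the problem to bounding the binomial probability $\binom{k}{s} p^s (1-p)^{k-s}$ for $p = r/n \in [0,1]$, which is exactly what \cref{lemma:poisson-bound-for-bin-distribution} handles: its two cases immediately give the bounds $\binom{k}{s}(s/k)^s((k-s)/k)^{k-s}$ for $1 \leq s \leq k-1$ and $\hat s^{\hat s}/(\hat s! e^{\hat s}) + o_k(1)$ for $\hat s \leq s \leq k - \hat s$. Adding the $o_{n \mid k}(1)$ error from the identity gives the claim. I do not foresee a substantial obstacle here; the only care needed is checking that the asymptotic notation behaves correctly when $r$ is permitted to depend on $n$, which is handled by the observation above that the leading term is bounded by $1$ and the ratios $r/n$, $(n-r)/n$ are in $[0,1]$ uniformly in $n$ and $r$.
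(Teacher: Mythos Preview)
Your overall strategy is sound and the reduction to \cref{lemma:poisson-bound-for-bin-distribution} is exactly what the paper does, but the intermediate step
\[
    \frac{(r)_s\,(n-r)_{k-s}}{(n)_k} = \left(\frac{r}{n}\right)^s \left(\frac{n-r}{n}\right)^{k-s} \cdot \bigl(1 + o_{n \mid k}(1)\bigr)
\]
is false as written. Take for instance $k=3$, $s=2$, $r=2$: then the left-hand side equals $2/\bigl(n(n-1)\bigr)\sim 2/n^2$ while the main term equals $4(n-2)/n^3\sim 4/n^2$, so the quotient tends to $1/2$ rather than $1$. The problem is that $(r)_s = r^s + O_k(r^{s-1})$ is only a $(1+o(1))$ multiplicative statement when $r\to\infty$, which the lemma does not assume. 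The repair is easy and is actually closer to what your hint ``using $r,n-r\le n$'' really gives: bound the additive error in the numerator by $O_k(n^{k-1})$ and divide by $(n)_k = n^k(1+O_k(1/n))$ to obtain directly an \emph{additive} $O_k(1/n)=o_{n\mid k}(1)$ error, uniformly in $r$. Your subsequent sentence about converting multiplicative to additive error is then unnecessary.

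For comparison, the paper avoids falling factorials altogether: it samples $X_1,\dots,X_k$ i.i.d.\ uniformly from the population, observes that conditioned on the $X_i$ being distinct the count $|\{i:X_i\in R\}|$ is hypergeometric while unconditionally it is binomial $\Bin(k,r/n)$, and notes that the collision probability is at most $\binom{k}{2}/n=o_{n\mid k}(1)$. This coupling gives the additive $o_{n\mid k}(1)$ in one line and makes the uniformity in $r$ transparent; your direct computation works too once patched, but needs a bit more bookkeeping.
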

    \begin{proof}
        Let $N$ be a set of size $n$ and let $R \subseteq N$ have size $r$. Let $X_1, \dots, X_k \in N$ be independent and uniformly random. The event $\B \coloneqq \bigcup_{1 \leq i < j \leq k} \{X_i = X_j\}$
        satisfies
        \begin{equation*}
            \IP[\B] \leq \sum_{1 \leq i < j \leq k} \IP[X_i = X_j] = \binom{k}{2} \cdot \frac{1}{n} \leq o_{n \mid k}(1).
        \end{equation*}
        By symmetry, the distribution of $|\{1 \leq i \leq k \mid X_i \in R\}|$ conditioned on $\B^c$ is hypergeometric with sample size $k$ in a population of size $|N| = n$ containing $|R| = r$ successes. Therefore,
        \begin{align*}
            \IP[Z = s]
             & = \IP[|\{1 \leq i \leq k \mid X_i \in R\}| = s \mid \B^c]                                     \\
             & = \IP[|\{1 \leq i \leq k \mid X_i \in R\}| = s] + o_{n \mid k}(1)                             \\
             & = \binom{k}{s} \left(\frac{r}{n}\right)^s \left(\frac{n-r}{n}\right)^{k-s} + o_{n \mid k}(1).
        \end{align*}
        The remaining inequalities now follow from \cref{lemma:poisson-bound-for-bin-distribution}.
    \end{proof}

    \begin{lemma}\label{lemma:poisson-bound-for-multi-hypergeometric-distribution}
        Consider a fixed nonnegative integer $f$ and a fixed positive integer $s$. Let $n$ and $k$ be positive integers with $n \geq k$. Consider a set $R$ with size $|R| = n$ and disjoint subsets $R_1, \dots, R_f \subseteq R$. Let $W \in \binom{R}{k}$ be uniformly random and define $Z_i \coloneqq |W \cap R_i|$ for all $1 \leq i \leq f$. We have
        \begin{equation*}
            \IP[Z_1 = \dots = Z_f = s] \leq \left(\frac{s^s}{s!e^s}\right)^f + o_k(1) + o_{n \mid k}(1).
        \end{equation*}
    \end{lemma}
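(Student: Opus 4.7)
My plan is to reduce the problem to sampling with replacement (exactly as in the proof of \cref{lemma:poisson-bound-for-hypergeometric-distribution}), express the resulting multinomial point probability as a product of $f$ binomial point probabilities via a telescoping identity, and then invoke \cref{lemma:poisson-bound-for-bin-distribution} on each factor.

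First, I would let $X_1, \ldots, X_k \in R$ be i.i.d.\ uniform and set $Y_j \coloneqq \abs{\{1 \le i \le k : X_i \in R_j\}}$. By a union bound, the probability that the $X_i$ are not all distinct is at most $\binom{k}{2}/n = o_{n \mid k}(1)$, and conditional on them being distinct, $\{X_1, \ldots, X_k\}$ is uniform in $\binom{R}{k}$. Hence $\IP[Z_1 = \cdots = Z_f = s] \le \IP[Y_1 = \cdots = Y_f = s] + o_{n \mid k}(1)$. Writing $p_j \coloneqq \abs{R_j}/n$ and $q \coloneqq 1 - \sum_j p_j$, the right-hand multinomial probability equals $\binom{k}{s, \ldots, s, k-fs} \prod_j p_j^s \cdot q^{k-fs}$ (and $0$ if $k < fs$). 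I would then verify the identity
\[
    \binom{k}{s, \ldots, s, k - fs} \prod_{j=1}^f p_j^s \cdot q^{k - fs} \;=\; \prod_{j=1}^f \binom{k_j}{s} \tilde p_j^s (1 - \tilde p_j)^{k_j - s},
\]
with $k_j \coloneqq k - (j-1)s$, $P_j \coloneqq 1 - p_1 - \cdots - p_j$, and $\tilde p_j \coloneqq p_j / P_{j-1}$: the multinomial coefficient factors as $\prod_j \binom{k_j}{s}$, and rewriting each $\tilde p_j^s (1-\tilde p_j)^{k_j - s}$ as $p_j^s (P_j/P_{j-1})^{k_j - s}$, the powers of the $P_j$ telescope to yield exactly $\prod_j p_j^s \cdot q^{k-fs}$.

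Each factor $\binom{k_j}{s} \tilde p_j^s (1 - \tilde p_j)^{k_j - s}$ is then a binomial point probability to which \cref{lemma:poisson-bound-for-bin-distribution} applies. Since $s$ and $f$ are fixed, $k_j = k - (j-1)s \to \infty$ as $k \to \infty$, so eventually $s \le k_j - s$ and the second inequality of \cref{lemma:poisson-bound-for-bin-distribution} with $\hat s = s$ bounds each factor by $\frac{s^s}{s! \, e^s} + o_k(1)$. Multiplying the $f$ (fixed number of) factors gives $\IP[Y_1 = \cdots = Y_f = s] \le \left(\frac{s^s}{s! \, e^s}\right)^f + o_k(1)$, and combining with the coupling bound completes the proof.

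The main obstacle is the algebraic telescoping identity above — it is a routine check once one writes down the correct factorization, but the correct factorization is not entirely obvious a priori. All degenerate cases are then easy to absorb: if some $P_{j-1} = 0$ with $p_j > 0$, the joint probability vanishes; if $k < fs$, the probability is again $0$; and for $k$ below the fixed threshold $(f+1)s$ (where some $k_j < 2s$ prevents the clean application of \cref{lemma:poisson-bound-for-bin-distribution}), the trivial bound $\IP[Z_1 = \cdots = Z_f = s] \le 1$ is absorbed by the $o_k(1)$ slack, which is $\Omega(1)$ for bounded $k$.
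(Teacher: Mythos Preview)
Your proof is correct and takes a different route from the paper's. The paper argues by induction on $f$ directly in the hypergeometric model: it conditions on $Z_f = s$, observes that $W \setminus R_f$ is then uniform in $\binom{R \setminus R_f}{k-s}$, and applies the inductive hypothesis with the smaller population $R \setminus R_f$; to keep the $o_{n\mid k}(1)$ error under control it first reorders so that $|R_f| \le n/f$, guaranteeing $|R \setminus R_f| \ge n/2$. Your approach instead couples once to sampling with replacement, then factors the multinomial point mass via the telescoping identity (equivalently, the multinomial chain rule $\IP[Y_1=s,\dots,Y_f=s]=\prod_j \IP[Y_j=s\mid Y_1=\dots=Y_{j-1}=s]$), and bounds each binomial factor by \cref{lemma:poisson-bound-for-bin-distribution}. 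Your route avoids both the induction and the pigeonhole reordering, at the cost of writing down and checking the telescoping identity; the paper's route stays in the hypergeometric world throughout and reuses \cref{lemma:poisson-bound-for-hypergeometric-distribution} as a black box. Both are short; the underlying idea (peel off one coordinate at a time) is the same, only the ambient model differs.
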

    \begin{proof}
        Without loss of generality, we may assume that $k \geq s+1$ as $s$ is fixed. We prove the statement by induction on $f$. The statement is trivial for $f = 0$. Now, suppose $f \geq 1$. Without loss of generality, we may assume $|R_f| \leq \frac{1}{f} \sum_{i=1}^f |R_f| \leq \frac{n}{f} \leq \frac{n}{2}$. Therefore, $R' \coloneqq R \backslash R_f$ satisfies $\left|R'\right| = n - |R_f| \geq \frac{n}{2} \geq \Omega_n(n)$. If $\IP[Z_f = s] = 0$, the claim is trivial. Thus, let us assume $\IP[Z_f = s] > 0$. By symmetry, the distribution of $W$, conditioned on $Z_f = s$, is uniform on $\binom{R'}{k-s}$. Therefore, by the inductive assumption for $f-1$,
        \begin{equation*}
            \IP[Z_1 = \dots = Z_{f-1} = s \mid Z_f = s] \leq \left(\frac{s^s}{s!e^s}\right)^{f-1} + o_k(1) + o_{n \mid k}(1),
        \end{equation*}
        where we used that $k-s = k - O(1)$ and $\left|R'\right| \geq \Omega_n(n)$. By \cref{lemma:poisson-bound-for-hypergeometric-distribution} with $s$ fixed and $1 \leq s \leq k-1$,
        \begin{equation*}
            \IP[Z_f = s] \leq \frac{s^s}{s!e^s} + o_k(1) + o_{n \mid k}(1).
        \end{equation*}
        By multiplying the two inequalities, we obtain
        \begin{equation*}
            \IP[Z_1 = \dots = Z_f = s] \leq \frac{s^s}{s!e^s}\left(\frac{s^s}{s!e^s}\right)^{f-1} + o_k(1) + o_{n \mid k}(1).\qedhere
        \end{equation*}
    \end{proof}

    \begin{lemma}\label{lemma:poisson-point-probability-decreasing}
        The function $\phi : s \mapsto \frac{s^s}{s!e^s}$ defined on the positive integers is monotonically decreasing and satisfies $\phi(s) \leq o_s(1)$.
    \end{lemma}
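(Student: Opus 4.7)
The plan is to reduce both statements to well-known facts about the sequence $(1+1/s)^s$ and Stirling's approximation.

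For monotonicity, I would compute the ratio directly:
\begin{equation*}
    \frac{\phi(s+1)}{\phi(s)} = \frac{(s+1)^{s+1}}{(s+1)!\,e^{s+1}} \cdot \frac{s!\,e^s}{s^s} = \frac{(s+1)^s}{s^s \cdot e} = \frac{(1+1/s)^s}{e}.
\end{equation*}
Since $(1+1/s)^s < e$ holds for every positive integer $s$ (a standard consequence of the fact that $(1+1/s)^s$ is strictly increasing with limit $e$, which itself follows from, e.g., Bernoulli's inequality applied to $(1-1/(s+1)^2)^{s+1}$, an estimate the author has already used in the proof of \cref{lemma:poisson-bound-for-bin-distribution}), this ratio is strictly less than $1$, giving $\phi(s+1) < \phi(s)$ and hence monotonicity.

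For the $o_s(1)$ claim, I would invoke Stirling's inequality in the simple form $s! \geq \sqrt{2\pi s}\,(s/e)^s$, which yields
\begin{equation*}
    \phi(s) = \frac{s^s}{s!\,e^s} \leq \frac{s^s}{\sqrt{2\pi s}\,(s/e)^s \cdot e^s} = \frac{1}{\sqrt{2\pi s}} \to 0.
\end{equation*}
No step here is expected to pose a genuine obstacle; the only mild subtlety is to make sure the inequality $(1+1/s)^s < e$ is justified rather than merely invoked, and this can be done in one line by the Bernoulli-type estimate already present in the paper.
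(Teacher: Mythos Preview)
Your proposal is correct and matches the paper's own proof essentially line for line: the paper computes the same ratio (written as $\phi(s)/\phi(s+1)=e/(1+1/s)^s\geq 1$) and then appeals to Stirling's formula to get $\phi(s)=(1+o_s(1))/\sqrt{2\pi s}=o_s(1)$.
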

    \begin{proof}
        For $s \geq 1$, we have
        \begin{equation*}
            \left(\frac{s^s}{s!e^s}\right)\left(\frac{(s+1)^{s+1}}{{(s+1)!e^{s+1}}}\right)^{-1} = e \left(\frac{s}{s+1}\right)^s = \frac{e}{\left(1 + \frac{1}{s}\right)^s} \geq 1.
        \end{equation*}
        By Stirling's formula (see, for example, \cite{robbins:1955}),
        \begin{equation*}
            \frac{s^s}{s!e^s} = \frac{s^s}{(1+o_s(1))\sqrt{2\pi s}\left(\frac{s}{e}\right)^s e^s} = \frac{1 + o_s(1)}{\sqrt{2\pi s}} \leq o_s(1).\qedhere
        \end{equation*}
    \end{proof}

    \begin{lemma}\label{lemma:bound-poly-times-negative-exp}
        For all positive integers $s$ and nonnegative real numbers $x$, we have
        \begin{equation*}
            x^s e^{-x} \leq \left(\frac{s}{e}\right)^s.
        \end{equation*}
    \end{lemma}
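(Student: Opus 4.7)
The plan is to prove the inequality by a direct calculus argument on the single-variable function $f(x) := x^s e^{-x}$ for fixed positive integer $s$. Since $f$ is smooth on $[0, \infty)$ with $f(0) = 0$ and $f(x) \to 0$ as $x \to \infty$, its maximum is attained at an interior critical point, which we locate explicitly.

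First, I would compute the derivative:
\begin{equation*}
    f'(x) = s x^{s-1} e^{-x} - x^s e^{-x} = x^{s-1} e^{-x} (s - x).
\end{equation*}
For $x > 0$, the sign of $f'(x)$ agrees with that of $s - x$, so $f$ is strictly increasing on $(0, s)$ and strictly decreasing on $(s, \infty)$. Hence the unique maximum of $f$ on $[0, \infty)$ is attained at $x = s$, giving
\begin{equation*}
    f(x) \leq f(s) = s^s e^{-s} = \left(\frac{s}{e}\right)^s
\end{equation*}
for all $x \geq 0$, which is the desired bound.

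There is essentially no obstacle; the argument is just locating the maximum of $x^s e^{-x}$. The only thing to be slightly careful about is handling $x = 0$ (where $f(0) = 0 \leq (s/e)^s$ trivially) and noting that the factor $x^{s-1}$ in $f'(x)$ vanishes at $x = 0$ when $s \geq 2$, but this does not affect the sign analysis on $(0, \infty)$.
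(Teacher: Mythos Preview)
Your proof is correct. The paper's proof takes a slightly different but equally short route: instead of differentiating $x^s e^{-x}$ and locating the maximum, it applies the elementary inequality $1+y\le e^y$ with $y=\tfrac{x}{s}-1$ to obtain $\tfrac{x}{s}\le e^{x/s-1}$, raises both sides to the $s$-th power to get $x^s\le s^s e^{x-s}$, and then multiplies by $e^{-x}$. Your argument finds the extremum directly via $f'(x)=x^{s-1}e^{-x}(s-x)$, while the paper's argument bypasses the optimization by invoking the tangent-line bound for the exponential; both yield the same maximal value $(s/e)^s$ at $x=s$, and neither approach has any real advantage over the other here.
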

    \begin{proof}
        For any real number $y$, we have $y + 1 \leq e^y$. Since $s \geq 1$, applying this bound for $y = \frac{x}{s} - 1$ yields
        \begin{equation*}
            \frac{x}{s} \leq e^{x / s - 1}.
        \end{equation*}
        Therefore, since $s$ is a positive integer and $x \geq 0$,
        \begin{equation*}
            x^s \leq s^s e^{x-s}.
        \end{equation*}
        The desired inequality follows by multiplying both sides by $e^{-x} > 0$.
    \end{proof}

    \begin{lemma}\label{lemma:bound-lin-comb-of-poly-times-negative-ext}
        For all nonnegative real numbers $y, z \geq 0$, there exists a real number $\lambda \in [0, 1]$ such that
        \begin{align*}
            \frac{y^2}{2} \cdot e^{-y-z} & \leq \lambda \cdot \frac{2}{e^2},   \\
            z e^{-y-z}                   & \leq (1-\lambda) \cdot \frac{1}{e}.
        \end{align*}
    \end{lemma}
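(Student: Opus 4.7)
The plan is to pick $\lambda := \frac{y^2 e^2}{4} e^{-y-z}$, which makes the first inequality $\frac{y^2}{2} e^{-y-z} \leq \lambda \cdot \frac{2}{e^2}$ hold with equality. The task then reduces to verifying $\lambda \in [0,1]$ together with the second inequality $z e^{-y-z} \leq (1-\lambda)/e$. Rearranging the second inequality and combining both conditions, they amount to the single claim
\[
\frac{y^2 e^2}{4} e^{-y-z} + z e^{1-y-z} \leq 1 \qquad \text{for all } y, z \geq 0, \qquad (\ast)
\]
since $(\ast)$ also forces $\lambda \in [0,1]$: its two summands are nonnegative, so each is at most $1$ individually.

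To prove $(\ast)$, I would substitute $u := y+z$ and view its left-hand side as $e^{-u}$ times the function $y \mapsto \frac{y^2 e^2}{4} + (u-y)e$ on $[0, u]$. This function is convex in $y$ (its second derivative equals $e^2/2 > 0$), so on $[0,u]$ it attains its maximum at an endpoint: either $y=0$, giving value $ue$, or $y=u$, giving value $\frac{u^2 e^2}{4}$. It therefore suffices to verify $u e^{1-u} \leq 1$ and $\frac{u^2 e^2}{4} e^{-u} \leq 1$, and both are immediate rescalings of \cref{lemma:bound-poly-times-negative-exp} (which states $x^s e^{-x} \leq (s/e)^s$): taking $s=1$, $x=u$ yields $u e^{-u} \leq 1/e$, and taking $s=2$, $x=u$ yields $u^2 e^{-u} \leq 4/e^2$.

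I do not foresee a real obstacle here. The point worth flagging is that a naive approach of bounding the two summands of $(\ast)$ separately via \cref{lemma:bound-poly-times-negative-exp} applied to $y$ and $z$ individually only yields $\frac{y^2 e^2}{4} e^{-y-z} \leq e^{-z}$ and $z e^{1-y-z} \leq e^{-y}$, whose sum can be as large as $2$; treating $y+z$ jointly via the convexity-on-$[0,u]$ step above is what restores the sharp constant $1$, which is in fact tight, being attained at $(y,z) \in \{(2,0),(0,1)\}$.
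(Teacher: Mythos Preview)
Your proof is correct. Both you and the paper make the same reduction: choose $\lambda = \frac{y^2 e^2}{4}e^{-y-z}$ so that the first inequality is tight, and then the remaining task is exactly your inequality $(\ast)$, which after multiplying through by $e^{y+z}$ is the paper's $f(y,z) = e^{y+z} - \frac{y^2 e^2}{4} - ze \geq 0$. The only difference is in how this inequality is verified. The paper argues that $f$ attains a global minimum, finds the interior critical point $(y^\ast,z^\ast) = (2/e,\,1-2/e)$ by setting both partial derivatives to zero, and checks $f(y^\ast,z^\ast)=1\geq 0$, handling the boundary cases $y=0$ and $z=0$ via \cref{lemma:bound-poly-times-negative-exp}. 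You instead fix $u=y+z$ and observe that $y\mapsto \frac{e^2}{4}y^2 + (u-y)e$ is convex on $[0,u]$, so its maximum is at an endpoint; this reduces $(\ast)$ directly to the two one-variable bounds $ue^{1-u}\leq 1$ and $\frac{e^2}{4}u^2 e^{-u}\leq 1$, which are the same boundary cases. Your route is slightly cleaner in that it sidesteps the existence-of-minimum remark and the interior critical-point computation, but the two arguments are otherwise the same in spirit.
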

    \begin{proof}
        The claim is equivalent to finding a real number $\lambda \in [0, 1]$ such that
        \begin{equation*}
            \frac{y^2 e^{2-y-z}}{4} \leq \lambda \leq 1 - z e^{1-y-z}.
        \end{equation*}
        So, it is sufficient to prove that for all nonnegative real numbers $y, z$, we have
        \begin{equation*}
            f(y, z) \coloneqq e^{y+z} - \frac{y^2 e^2}{4} -  ze \geq 0.
        \end{equation*}
        It is not hard to see that $f(y, z)$ attains a global minimum for some $(y^\ast, z^\ast) \geq 0$.
        If $y^\ast = 0$ or $z^\ast = 0$, the inequality is true by \cref{lemma:bound-poly-times-negative-exp}. Now, assume $y^\ast > 0$ and $z^\ast > 0$. Therefore, the partial derivatives of $f(y, z)$ with respect to $y$ and $z$ are $0$ at $(y^\ast, z^\ast)$. By differentiating with respect to $z$, we obtain $e^{y^\ast + z^\ast} - e = 0$ and thus, $y^\ast + z^\ast = 1$. Hence, by differentiating with respect to $y$, we obtain $e^{y^\ast + z^\ast} - \frac{y^\ast e^2}{2} = 0$, which yields $y^\ast = 2 e^{y^\ast + z^\ast - 2} = \frac{2}{e}$, and $z^\ast = 1 - y^\ast = 1 - \frac{2}{e}$. Therefore,
        \begin{equation*}
            f(y^\ast, z^\ast) = e - 1 - \left(1 - \frac{2}{e}\right)e = 1 \geq 0.\qedhere
        \end{equation*}
    \end{proof}
\end{appendices}

\end{document}